\newtheorem{theorem}{Theorem}[section]
\newtheorem{lemma}[theorem]{Lemma}
\newtheorem{proposition}[theorem]{Proposition}
\newtheorem{definition}[theorem]{Definition\rm}
\newtheorem{conjecture}{Conjecture}
\newcommand{\R}{\mathbb R}
\newcommand{\N}{\mathbb N}
\renewcommand{\S}{\mathbb S}
\newcommand{\Lip}{{\rm Lip}}
\begin{document}
\pagenumbering{arabic}
\title{$C^{1,\alpha}$ isometric embeddings of polar caps}
\author[De Lellis]{Camillo De Lellis}
\address{School of Mathematics, Institute for Advanced Study, 1 Einstein Dr., Princeton NJ 05840, USA\\ and Princeton University\\
and Universit\"at Z\"urich}
\email{camillo.delellis@math.uzh.ch}

\author[Inauen]{Dominik Inauen}
\address{Institut f\"ur Mathematik, Universit\"at Z\"urich, CH-8057 Z\"urich}
\email{dominik.inauen@math.uzh.ch}

\begin{abstract} We study isometric embeddings of $C^2$ Riemannian manifolds in the Euclidean space and we establish that the H\"older
space $C^{1,\frac{1}{2}}$ is critical in a suitable sense: in particular we prove that for $\alpha > \frac{1}{2}$ the Levi-Civita connection
of any isometric immersion is induced by the Euclidean connection, whereas for any $\alpha < \frac{1}{2}$ we construct $C^{1,\alpha}$ 
isometric embeddings of portions of the standard $2$-dimensional sphere for which such property fails.
\end{abstract}
\maketitle

\section{Introduction}

In this paper we investigate the flexibility and rigidity of $C^{1,\alpha}$ isometric embeddings of Riemannian manifolds in  Euclidean spaces. Following standard notation,
if $(\Sigma, g)$ is a $C^1$ Riemannian manifold and $v: \Sigma\to \mathbb R^N$ is a $C^1$ immersion, we denote by $e$ the standard Euclidean metric on $\mathbb R^N$ and
by $v^{\sharp}e$ its pull-back on $\Sigma$: $v$ is isometric if and only if 
$v^\sharp e = g$. 

The outcome of our investigations is that, when we consider $C^{1,\alpha}$ isometric embeddings, the H\"older exponent $\alpha_0 = \frac{1}{2}$ is a threshold in the following sense. When $\alpha > \frac{1}{2}$ and $v$ is a $C^{1,\alpha}$ isometric immersion of a $C^2$ Riemannian manifold $(\Sigma, g)$, the Levi-Civita connection of $(\Sigma, g)$ agrees with the connection induced by the ambient (Euclidean) one. Instead, for any $\alpha < \frac{1}{2}$ we can produce isometric immersions for which the Levi-Civita connection induced by the ambient differs from the one compatible with $g$. While we prove the first statement in full generality, cf. Proposition
\ref{p:p=p}, concerning the second statement we defer the most general versions to a forthcoming work. 
In this note we focus instead on a particular case which, in our opinion, provides the cleanest illustration of the criticality of the exponent $\alpha = \frac{1}{2}$ in Theorem \ref{t:criticality} below.

Consider the standard $2$-dimensional sphere as the subset $\mathbb S^2 := \{x : x_1^2+x_2^2 +x_3^2 =1\}\subset \mathbb R^3$ and for $a\in ]-1,1[$ denote by $(\Sigma_a, \sigma)$ the Riemannian manifold (with boundary) given by 
\begin{equation}\label{e:polar_cap}
\Sigma_a = \mathbb S^2 \cap \{x_3 \geq a\} = \{x\in \mathbb R^3 : x_1^2+x_2^2 + x_3^3 =1 \;\mbox{and}\; x_3\geq a\}
\end{equation} 
equipped with the standard metric $\sigma$ as submanifold of $\mathbb R^3$. 
\begin{definition}
We denote by $\mathscr{I}^\alpha_k (\Sigma_a)$ the space of isometric immersions $v: \Sigma_a \to \mathbb R^{2+k}$ of class $C^{1,\alpha}$ with the property that $v (x_1, x_2, a) = (x_1, x_2, 0, \ldots, 0)$ for all $(x_1, x_2, a)\in \partial \Sigma_a$. Moreover we denote by $\gamma_a$ the circle
$v (\partial \Sigma_a)$. 
\end{definition}
In what follows $\langle x,y\rangle$ denotes the scalar product of vectors $x,y\in \mathbb R^m$. 
\begin{theorem}\label{t:criticality} 
Let $X$ be the interior unit normal to $\partial \Sigma_a$ in $\Sigma_a$ and $Z: \gamma_a \to \mathbb R^{2+k}$ the unit vector field $Z (x_1, x_2, 0, \ldots , 0) = - (1-a^2)^{-\sfrac{1}{2}} (x_1, x_2, 0, \ldots , 0)$. 
For any element $v\in \mathscr{I}^\alpha_k (\Sigma_a)$ let $Y: \gamma_a \to \mathbb R^{2+k}$ be the vector field $v_* X$.
Then the following holds
\begin{itemize}
\item[(a)] If $\alpha > \frac{1}{2}$, $-1<a<1$, $k\geq 1$ and $v\in \mathscr{I}^\alpha_k (\Sigma_a)$, then $\langle Y, Z \rangle = a$.
\item[(b)] For any $\alpha < \frac{1}{2}$, $0<a<1$ and $k\geq 12$ there is $v\in \mathscr{I}^\alpha_k (\Sigma_a)$ such that $\langle Y, Z \rangle > a$. 
\end{itemize}
\end{theorem}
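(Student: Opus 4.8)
\medskip

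\noindent\emph{Strategy for (b).} The plan is to construct $v$ by Nash--Kuiper convex integration, starting from an explicit subsolution $w$ that already realises the target behaviour at $\gamma_a$ and whose metric error $\sigma-w^\sharp e$ is strictly positive in the interior but degenerates — in every direction — at $\partial\Sigma_a$. Write $\Sigma_a$ in geodesic polar coordinates about the north pole, $\sigma=dr^2+\sin^2 r\,d\theta^2$ on $[0,r_a]\times\mathbb S^1$, with $r_a=\arccos a\in(0,\tfrac{\pi}{2})$ and $\partial\Sigma_a=\{r=r_a\}$, so that $X=-\partial_r$ on $\gamma_a$. I would fix a smooth profile $\rho\colon[0,r_a]\to[0,\infty)$ which is \emph{linear near $0$}, satisfies $\rho(0)=0$, $\rho'<1$ on $[0,r_a)$, $\rho(r)<\sin r$ on $(0,r_a)$, and $\rho(r_a)=\sin r_a$, $\rho'(r_a)=1$, $\rho''(r_a)>0$; such a $\rho$ exists (start from a small slope $\rho'(0)<\sin r_a/r_a$ and let $\rho$ catch up to $\sin$ from below at $r=r_a$ with slope $1>\cos r_a=a$). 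Set
\[
w(r,\theta):=\bigl(\rho(r)\cos\theta,\ \rho(r)\sin\theta,\ 0,\dots,0\bigr)\in\mathbb R^{2+k}.
\]
Then $w$ is a smooth immersion of $\Sigma_a$ (linearity of $\rho$ near $0$ makes the pole a smooth point), $w|_{\partial\Sigma_a}$ is exactly the prescribed circle, and
\[
\sigma-w^\sharp e=(1-\rho'(r)^2)\,dr^2+(\sin^2 r-\rho(r)^2)\,d\theta^2\ \geq\ 0,
\]
with both coefficients uniformly positive on compact subsets of $\{0<r<r_a\}$ and both vanishing to first order, at rates $2\rho''(r_a)(r_a-r)$ and $2\sin r_a(1-a)(r_a-r)$, as $r\uparrow r_a$. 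Finally $\partial_r w(r_a,\theta)=(\cos\theta,\sin\theta,0,\dots,0)$, hence $w_*X=-(\cos\theta,\sin\theta,0,\dots,0)$, which by the definition of $Z$ is exactly $Z$; thus $\langle w_*X,Z\rangle\equiv 1>a$. The condition $\rho'(r_a)=1$ is not a convenience but forced: the scheme below needs $\sigma-w^\sharp e$ to vanish at $\partial\Sigma_a$ in \emph{all} directions, which requires $|\partial_r w(r_a,\cdot)|=1$ (so that the \emph{normal} part $(1-\rho'^2)\,dr^2$ degenerates there too); a subsolution with $|\partial_r w(r_a,\cdot)|<1$ would carry an $O(1)$ error in the $dr^2$ direction along $\gamma_a$, and cancelling it would require corrugations with non-vanishing trace on $\partial\Sigma_a$, destroying either the Dirichlet condition or the $C^1$-smallness of the corrections.

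\medskip

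\noindent I would then upgrade $w$ to an isometric immersion $v\in C^{1,\alpha}$ by the sharp convex integration scheme, localised near $\partial\Sigma_a$ to accommodate the degeneracy. Decompose a collar $\{r_a-\delta\leq r<r_a\}$ into dyadic shells $S_j=\{r_a-\delta 2^{-j}\leq r\leq r_a-\delta 2^{-j-1}\}$, on each of which the error is comparable to $2^{-j}$ and nearly constant, and clear it scale by scale: the error of size $2^{-j}$ on $S_j$ is removed by corrugations of amplitude $\sim 2^{-j/2}$ oscillating in the extra $\mathbb R^{k}$ directions, with the substeps inside a shell and the matching across adjacent shells chained so that the sharp per-stage estimates still close and the limit lies in $C^{1,\alpha}$ for every $\alpha<\tfrac12$. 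Two features keep the boundary conditions alive: (i) each corrugation addressing $S_j$ is supported strictly inside a fixed neighbourhood of $S_j$, hence at positive distance from $r=r_a$, so it and all of its derivatives vanish identically near $\partial\Sigma_a$; and (ii) the amplitudes $\sim 2^{-j/2}$ are summable, so the iteration converges in $C^1$ (indeed in $C^{1,\alpha}$), whence $v|_{\partial\Sigma_a}=w|_{\partial\Sigma_a}$ and $\partial_r v(r_a,\cdot)=\partial_r w(r_a,\cdot)$; consequently $Y=v_*X=w_*X$ on $\gamma_a$ and $\langle Y,Z\rangle\equiv 1>a$. Away from the collar — including a neighbourhood of the pole, where in Cartesian coordinates the error is uniformly positive and non-degenerate — the unmodified scheme applies. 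It is here that the codimension enters: pushing $\alpha$ up to $\tfrac12$ forces one to perform the corrections at each stage ``in parallel'', placing several corrugations in mutually orthogonal normal directions that are not reused for a bounded number of consecutive stages, and this bookkeeping is what requires $k\geq 12$; no attempt is made to optimise the constant.

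\medskip

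\noindent The step I expect to be the main obstacle is precisely this second part: running a convex integration scheme that is simultaneously sharp (flexible up to $\alpha=\tfrac12$, i.e.\ up to the rigidity threshold of Proposition~\ref{p:p=p} and of part~(a)), compatible with the Dirichlet condition on $\partial\Sigma_a$, and able to cope with a metric error that degenerates at the boundary. Controlling how the oscillatory errors generated on one shell leak into the neighbouring ones, while keeping every estimate consistent with $\alpha<\tfrac12$ and with the support restrictions that protect the boundary data and the normal derivative, is the delicate point; by contrast, the construction of $w$, the identification of $\langle w_*X,Z\rangle$, and the passage to the $C^1$ limit are routine. The appearance of $\tfrac12$ is, as always, an Onsager-type balance: the amount of metric error that can be removed at a given length scale by a correction staying in $C^{1,\alpha}$ is of exactly the size needed to run the iteration precisely when $\alpha<\tfrac12$ — the complement of the range in which the commutator estimate behind part~(a) forces $\langle Y,Z\rangle=a$.
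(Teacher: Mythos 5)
Your subsolution and the boundary computation are essentially the paper's: the paper also takes a rotationally symmetric short map $u(r,\theta)=(\varphi(r)\cos\theta,\varphi(r)\sin\theta,0,\dots,0)$ which is isometric on the boundary circle with the radial derivative of full metric length there, so that the image radial tangent is horizontal, $Y=Z$ and $\langle Y,Z\rangle=1>a$, and with metric error vanishing linearly at the boundary (the paper works over $\bar B_1$ with $g=\frac{R^2}{R^2-r^2}dr^2+r^2d\theta^2$ rather than in geodesic polar coordinates, and additionally tunes $\varphi''(1)$ so that \emph{both} components of the error degenerate at the \emph{same} linear rate, because the iteration hypothesis \eqref{tmain:ass2} demands $g-u^\sharp e$ to be trapped between $(1\pm\bar\sigma_0)he$ for a single radial $h$ and a fixed small $\bar\sigma_0$; your profile with an arbitrary $\rho''(r_a)>0$ does not guarantee this and would need the extra matching $2\rho''(r_a)\approx 2\sin r_a(1-a)$). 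Also note that part (a) of the theorem is part of the statement and is not addressed in your proposal beyond a citation.

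The genuine gap is the second half, which you yourself flag: everything after the subsolution — a boundary-jet-preserving convex integration reaching \emph{every} $\alpha<\frac12$ — is asserted, not proved, and it is the bulk of the paper (Theorem \ref{t:main}, Proposition \ref{p:stage} and Sections 5--8). Your dyadic-shell sketch with amplitudes $\sim 2^{-j/2}$ is morally the role played by the cutoffs $\eta_q$ supported in $B_{1-R\delta_{q+1}}$ and amplitudes $\eta h_q^{\sfrac12}c_k$, but a plain Nash--Kuiper step at each scale does not close the estimates up to $\alpha=\frac12$: one needs the K\"all\'en-type device of absorbing the terms linear in the amplitude into the decomposition of the error (the matrices $M_i$, $\Lambda_{ij}$ in Proposition \ref{p:decomp} and Lemma \ref{l:ugliestlemma}), together with mollification to handle the loss of derivatives and the double-exponential parameter hierarchy $\delta_q=a^{-b^q}$, $\lambda_q=a^{cb^{q+1}}$. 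Moreover your explanation of $k\geq 12$ (normal directions ``not reused for a bounded number of consecutive stages'') is not the actual mechanism and hides a step your scheme would stumble on: to \emph{start} the iteration one must bring the error to size $\delta_1$ near the boundary without moving the map by more than the threshold $\rho_0$ needed for the normal frames of Proposition \ref{p:normals}; a first Nash twist costs $C\delta^{\sfrac12}$ in $C^1$, which may exceed $\rho_0$, and the paper circumvents this by adding $n(n+1)=6$ extra target coordinates and perturbing the \emph{metric} via $\tilde g=g-w^\sharp e$ (Lemma \ref{l:firstapprox} and \eqref{e:w}); the codimension $12=n(n+2)-n+n(n+1)$ comes precisely from this, not from scheduling corrugations across stages. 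Without carrying out this machinery (or an equivalent), the proposal establishes only the easy geometric reduction, not part (b).
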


Our theorem is thus related to a question of Gromov on the criticality of the exponent $\frac{1}{2}$, cf. \cite[Section 3.5, Quest. C]{Gromov}, because the proof of part (b) follows a suitable modification of the celebrated Nash-Kuiper construction, cf. \cite{Nash54,Kuiper} and (a) is thus an obstruction to the implementation of such methods, at least in our context where a boundary condition is imposed.
Note indeed that without such restriction K\"allen in \cite{Kallen} is able to reach the threshold $C^{1,1}$: our theorem implies thus that the Nash-Kuiper construction and K\"allen's iteration differ in a rather nontrivial way.

Moreover, although in a weak sense, Theorem \ref{t:criticality} can be thought as an analog of the celebrated conjecture of Onsager on the energy conservation for nonsmooth solutions of the threedimensional incompressible Euler equations, cf. \cite{On1949,Eyink,CoETi1994,DS1,ChCoFrSh2008,DS-Bull1,DS-Inv,DS-JEMS,DS-Bull2,Isett,Bu2014,BDIS,BuDLeSz2016,DaSz2016,Isett2016,BDSV}. 

Indeed, we expect much stronger manifestations of the criticality of the exponent $\frac{1}{2}$ to hold for isometric embeddings.
First of all, we do not expect the codimension 12 for part (b) in Theorem \ref{t:criticality} to have any geometric meaning, but we conjecture that the same holds in any codimension:

\begin{conjecture}\label{c:criticality}
For any $\alpha < \frac{1}{2}$ and any $0<a<1$ there is $v\in \mathscr{I}^\alpha_1 (\Sigma_a)$ such that $\langle Y , Z \rangle > a$.
\end{conjecture}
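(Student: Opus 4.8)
The plan is to produce $v$ by a Nash--Kuiper convex integration carried out in codimension one, adapting the argument behind part (b) of Theorem~\ref{t:criticality} but replacing its high-codimension ``Nash twist'' by Kuiper's one-dimensional corrugations. First I would fix a smooth \emph{short} immersion $v_0:\Sigma_a\to\R^3$ with $v_0(x_1,x_2,a)=(x_1,x_2,0)$ on $\partial\Sigma_a$; since the prescribed boundary map is an isometry of $\partial\Sigma_a$ onto $\gamma_a$, its deficit $h_0:=\sigma-v_0^{\sharp}e$ automatically vanishes in the direction tangent to $\gamma_a$ along $\gamma_a$, and I would additionally require $h_0$ to be positive semidefinite, small in a neighbourhood of $\gamma_a$, and such that $\langle (v_0)_* X,Z\rangle\,/\,|(v_0)_* X|\ge a+2\varepsilon$ on $\gamma_a$ for some fixed $\varepsilon>0$. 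Such a $v_0$ exists --- one may even take it a surface of revolution $(\rho(t)\cos\theta,\rho(t)\sin\theta,z(t))$: near $\gamma_a$ choose $\rho(0)=\sqrt{1-a^2}$, $z(0)=0$, $|z'(0)|$ a small fixed multiple of $\sqrt{1-a^2}$ and $\rho'(0)^2+z'(0)^2$ slightly below $1$, which yields $\langle (v_0)_* X,Z\rangle\,/\,|(v_0)_* X|=|\rho'(0)|\,(\rho'(0)^2+z'(0)^2)^{-1/2}>a$ (the inequality needing only $a<1$) together with a strictly short surface just inside $\gamma_a$; this profile is then interpolated, still short, to a deflated round cap near the pole. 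Note that the nonvanishing of $h_0$ near $\gamma_a$ is unavoidable: were $v_0$, or the limit $v$, isometric and of class $C^{1,\beta}$ with $\beta>\sfrac12$ near $\gamma_a$, the rigidity mechanism of part (a) of Theorem~\ref{t:criticality} would force $\langle Y,Z\rangle=a$ there.

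Next I would iterate, producing maps $v_q\to v$ in $C^1$, organising the corrections into stages, each stage shrinking the residual deficit $h_q$ by a definite factor (geometrically, or faster) and consisting of a bounded number of corrugation substeps along directions $\xi_1,\xi_2,\xi_3$ drawn from a fixed finite family, with smooth nonnegative coefficients from a partition of unity decomposing $h_q=\sum_i a_i(x)^2\,\xi_i\otimes\xi_i$; near $\gamma_a$ one works in a frame with $\xi_1$ dual to $X$, so the deficit there, which lies in the $X\otimes X$ direction, is cleared by the first substep. Each substep adds a one-dimensional corrugation of frequency $\lambda_q$ and amplitude $\sim a_i\,\lambda_q^{-1}$, with $\lambda_{q+1}\gg\lambda_q$; the ingredient that must be imported with care from the boundary version of the construction behind part (b) is that the corrugation profiles near $\gamma_a$ be arranged so that each displacement $v_{q+1}-v_q$ vanishes identically on $\gamma_a$, keeping the prescribed boundary values intact throughout. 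Since $\|v_{q+1}-v_q\|_{C^1}$ near $\gamma_a$ is $\lesssim\delta_q^{1/2}$ and $\sum_q\delta_q^{1/2}$ is controlled by the square root of the deficit of $v_0$ near $\gamma_a$, which we made small, the field $Y=v_* X$ stays $C^0$-close on $\gamma_a$ to $(v_0)_* X$; hence, for $v_0$ chosen with small enough deficit near $\gamma_a$, the limit satisfies $\langle Y,Z\rangle\ge a+\varepsilon>a$.

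The genuine obstacle is the H\"older exponent. Run with the best estimates presently available for one-dimensional corrugations of surfaces in $\R^3$, the scheme above only yields $v\in\mathscr I^\alpha_1(\Sigma_a)$ for $\alpha$ strictly below some $\alpha_1<\sfrac12$ (with the current state of the art $\alpha_1$ no larger than $\sfrac15$), because each stage needs several corrugation substeps and each substep partially spoils the $C^2$-gain of the previous ones, forcing the frequency to grow \emph{within} a stage; compounding this loss over infinitely many stages keeps the final exponent well below $\sfrac12$. To reach every $\alpha<\sfrac12$ in codimension one one essentially has to remove this loss: for instance by designing the stages so that the residual deficit stays of rank one (a single substep per stage), or by extracting an extra algebraic gain from the near-flatness of the optimised short map near $\gamma_a$, or by a genuinely new corrugation profile correcting the full $2\times2$ deficit in one step without the extra ambient dimensions exploited in part (b). Solving this is precisely the content of the conjecture; by comparison, the construction of $v_0$, the preservation of the boundary values and the propagation of $\langle Y,Z\rangle>a$ are variants of ingredients already present in the proof of part (b).
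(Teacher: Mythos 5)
There is a genuine gap here, and in fact the statement you were asked about is not proved in the paper at all: it is stated as Conjecture \ref{c:criticality}, an open problem. Your proposal, to its credit, diagnoses the situation accurately --- the boundary-preserving Nash--Kuiper scheme, the choice of a short initial map $v_0$ with $\langle (v_0)_*X,Z\rangle$ bounded away from $a$, the cutoff mechanism keeping $v_q=v_0$ on $\gamma_a$, and the $C^1$-closeness argument propagating $\langle Y,Z\rangle>a$ to the limit are all genuine ingredients, and they mirror what the paper does for part (b) of Theorem \ref{t:criticality} (where, in codimension $\geq 12$, the K\"all\'en-type ansatz with extra normal directions lets a whole symmetric deficit be absorbed in a single step, which is what drives the exponent up to any $\alpha<\frac12$). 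But the decisive step --- reaching \emph{every} $\alpha<\frac12$ with only one extra ambient dimension --- is exactly the point where your argument stops. As you yourself concede, running the codimension-one corrugation estimates currently available yields only $\alpha$ below some threshold strictly less than $\frac12$ (the paper notes $\frac15$ for $k=1$, consistent with \cite{Laszlo2} and \cite{DIS15}), because each stage requires several rank-one substeps and the frequency must grow within a stage, degrading the final H\"older exponent. The remedies you list (keeping the residual deficit of rank one, exploiting near-flatness of the short map near $\gamma_a$, inventing a corrugation correcting the full $2\times 2$ deficit in one step) are named but not implemented, and no argument is given that any of them can work; indeed the first would require the decomposition $h_q=\sum_i a_i^2\,\xi_i\otimes\xi_i$ to collapse to a single term globally, which a generic positive-definite deficit on a two-dimensional domain does not allow.

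So the proposal should be read as a correct account of why the conjecture is plausible and of which parts of the paper's part (b) machinery transfer to codimension one, not as a proof: the exponent-loss problem it isolates is precisely the content of the conjecture, and it remains open both in your write-up and in the paper.
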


It is possible to use the same ideas of this paper to show that indeed conclusion (b) of Theorem \ref{t:criticality} holds for every $\alpha < \alpha_0 (k)$, where $\alpha_0 (k)$ is an explicitely computable number. For $k=1$ such threshold is $\frac{1}{5}$ and this can be shown quickly using some of the results in \cite{Laszlo2}. In fact while we were completing our work we learned that the authors in \cite{Laszlo2} were dealing with
Nash-Kuiper constructions of $C^{1,\alpha}$ isometric embeddings of Riemannian manifolds which are prescribed at the boundary, although with a different purpose. In the $C^1$ case, such variant of the classical Nash-Kuiper construction was first given in \cite{Wasem}.

Concerning part (a) of Theorem \ref{t:criticality}, in the case of codimension $1$ a much stronger conclusion holds if $\alpha > \frac{2}{3}$: in that case any $v\in \mathscr{I}^\alpha_1 (\Sigma_a)$ must be the standard isometric embedding, namely $v (\Sigma_a) = \Sigma_a$, up to translations and rotations. This follows from classical works on the Monge-Amp\`ere equation after showing that $v (\Sigma_a)$ is (locally) convex. The latter property was first proved by Borisov in the fifties for isometric immersions of positively curved surfaces in a series of papers, cf. \cite{Borisov58-1,Borisov58-2,Borisov58-3,Borisov58-4,BorisovRigidity1,BorisovRigidity2}. A much shorter argument has been given more recently in \cite{CDS10}. Motivated by Borisov's result, the following conjecture on the isometric embeddings of positively curved $2$-dimensional surfaces in the euclidean threedimensional space seems quite natural and would provide
a much stronger version of the criticality of the H\"older exponent $\frac{1}{2}$.

\begin{conjecture}\label{c:criticality2}
Let $\Sigma$ be a $2$-dimensional compact Riemannian manifold (possibly with boundary) with positive Gauss curvature. Then:
\begin{itemize}
\item[(a)] For any $\alpha > \frac{1}{2}$ the image of any $C^{1,\alpha}$ isometric embedding $v$ of $\Sigma$ in $\mathbb R^3$ is locally convex (namely, for any $p\in \Sigma$ there is a neighborhood $U$ such that $v (U)$ is convex).
\item[(b)] For any $\alpha < \frac{1}{2}$ there is a $C^{1,\alpha}$ isometric embedding $v$ of $\Sigma$ in $\mathbb R^3$ which is not locally convex and in fact any short embedding can be uniformly approximated with $C^{1,\alpha}$ isometric embeddings. 
\end{itemize}
\end{conjecture}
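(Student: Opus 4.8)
The plan is to treat the two halves of Conjecture \ref{c:criticality2} separately, since in our view they require genuinely different ideas and neither is within reach of the methods of this note; what follows is therefore a program rather than a proof.

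For part (a), the point of departure is Proposition \ref{p:p=p}: when $\alpha > \frac{1}{2}$ the Levi-Civita connection of $(\Sigma,\sigma)$ coincides with the one induced by $v$ and the Euclidean connection, so the Gauss formula $\nabla^\sigma_X Y = (D_X(v_*Y))^\top$ holds and the normal part $(D_X(v_*Y))^\perp$ defines a symmetric \emph{very weak second fundamental form} $\mathrm{II}$, a priori only a distribution. One would first establish, via a commutator estimate of Constantin--E--Titi type (cf.\ \cite{CoETi1994}) --- the very mechanism that makes $\alpha=\frac{1}{2}$ critical --- that the Gauss equation $\det\mathrm{II} = \kappa$ holds in the sense of distributions, $\kappa$ being the (positive) Gauss curvature of $\sigma$. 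The decisive step would then be a regularity upgrade: to prove that for $\alpha>\frac{1}{2}$ the distribution $\mathrm{II}$ is represented by a continuous symmetric $2$-tensor, or at least by a signed Radon measure whose restriction to each sufficiently small ball has a definite sign. Granting this, $\det\mathrm{II}=\kappa>0$ forces $\mathrm{II}$ to be nondegenerate, hence locally of constant signature, and after replacing $v$ by a reflection one may assume $\mathrm{II}\ge 0$ near a given point $p$; an integration argument applied to $h(q)=\langle v(q)-v(p),n(p)\rangle$, using $\nabla h(p)=0$ and ``$\nabla^2 h=\mathrm{II}$'', then shows that the tangent plane at $p$ is a local support plane, and running this over all $p$ gives local convexity of $v(\Sigma)$.

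The main obstacle in part (a) is precisely this regularity upgrade. The analogous statement is presently known only for $\alpha>\frac{2}{3}$ (see \cite{CDS10}, sharpening earlier work of Borisov), and bringing the threshold down to $\frac{1}{2}$ appears to need a new compensated-compactness/div--curl input tailored to the codimension-one Gauss--Codazzi system, or alternatively a degree-theoretic argument for the Hölder Gauss map $n\colon\Sigma\to\mathbb S^2$: one would try to show that a failure of local convexity at $p$ makes $n$ fail to be locally injective, and that this contradicts the identity $\int_{B_r(p)}\kappa\,dA > 0$ once the left-hand side is interpreted as a signed covering multiplicity of $n|_{B_r(p)}$.

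For part (b), the natural route is a Nash--Kuiper scheme (\cite{Nash54,Kuiper}) in codimension one, optimized for the Hölder exponent. Starting from an arbitrary short embedding $u\colon\Sigma\to\mathbb R^3$, made strictly short by a preliminary perturbation, one iterates over stages: at each stage one decomposes the metric error $\sigma-v^\sharp e$ into a bounded number of primitive metrics $a_j\,d\ell_j\otimes d\ell_j$ and cancels them one at a time by superimposing fast, small-amplitude corrugations along the $\ell_j$-directions, while tracking the $C^0$, $C^1$ and $C^2$ norms so as to optimize the per-stage gain in regularity; a uniform $C^0$ bound keeps the iterates close to $u$, so the limit is a $C^{1,\alpha}$ isometric embedding uniformly close to $u$, which yields both the non-convex examples (choose $u$ far from every locally convex surface) and the approximation statement (let $u$ range over all short embeddings). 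Here the obstacle is that even with the recent refinements --- \cite{Laszlo2} and the isometric-embedding analogues of \cite{BDSV} for closed manifolds and spheres --- the best exponent currently reachable in codimension one stays far below $\frac{1}{2}$ (for $\Sigma_a$ it is $\frac{1}{5}$, cf.\ the discussion above), so one needs a substantially more efficient construction: either a decomposition of the metric error into very few primitive pieces per stage, or an intermittent/Mikado-type building block in the spirit of \cite{DaSz2016,Isett2016,BDSV}, designed to push the per-stage regularity gain toward the Onsager-type value $\frac{1}{2}$.
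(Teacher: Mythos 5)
The statement you were asked to prove is Conjecture \ref{c:criticality2}: the paper does not prove it, and explicitly records only the known partial results (local convexity for $\alpha>\frac{2}{3}$ via Borisov and \cite{CDS10}, and part (b) for $\alpha<\frac{1}{5}$ and disk topology via \cite{DIS15}). Your submission is, as you yourself say, a program rather than a proof, and the steps you defer are precisely the open problems; so there is no argument here that could be checked against the paper, and the proposal cannot be accepted as a proof of the statement.

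Two concrete points in your outline deserve a caveat, since as written they suggest the gaps are smaller than they are. In part (a), the assertion that a Constantin--E--Titi-type commutator estimate yields the distributional Gauss equation $\det \mathrm{II}=\kappa$ already for $\alpha>\frac{1}{2}$ is not known and is not what the paper's rigidity mechanism gives: Proposition \ref{p:p=p} (via Lemma \ref{l:product} and the quadratic estimate of Lemma \ref{l:CET}) controls only first-order, connection-level quantities, whereas the Gauss curvature is quadratic in the second fundamental form, and it is exactly this extra order that pushes the currently available threshold to $\frac{2}{3}$ in \cite{CDS10}; so even the ``first step'' of your program for (a) is an open problem, not a routine adaptation. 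In part (b), the codimension-one constraint is the whole difficulty: the paper's own flexibility result, Theorem \ref{t:main}, needs target dimension $m\geq n(n+2)$ plus $n(n+1)$ auxiliary coordinates (codimension $12$ for $n=2$) precisely because the K\"all\'en-type decomposition \eqref{d:Mi}--\eqref{d:Lambdaij}, in which the linear and quadratic error terms are absorbed into the coefficients via Proposition \ref{p:decomp}, requires many independent normal directions; no analogue of this, nor of the intermittent (Mikado-type) constructions from the Euler equations that you invoke, is currently available for isometric embeddings in $\mathbb R^3$, which is why \cite{DIS15} and \cite{Laszlo2} remain stuck well below $\frac{1}{2}$. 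In short, your sketch correctly identifies the expected lines of attack and the obstructions, but both halves rest on unproven key steps, and the paper itself offers no proof of this conjecture to compare with.
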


The best result concerning part (b) of the Conjecture is contained in \cite{DIS15}, where the statement is shown for any $\alpha < \frac{1}{5}$ and when $\Sigma$ is topologically a disk. 

We finally remark that when $\Sigma$ is connected and has no boundary, namely it is topologically a $2$-dimensional sphere, the above conjecture would have the rather elegant outcome that $C^{1,\alpha}$ isometric embeddings in $\mathbb R^3$ are unique up to isometries of the ambient space for $\alpha > \frac{1}{2}$, whereas they are highly nonunique for $\alpha < \frac{1}{2}$. 

\section{Rigidity: Proof of Theorem \ref{t:criticality} (a)}

\subsection{Preliminaries}

We start by recalling some well known facts in the theory of distributions. Given a closed interval $[a,b]$ we will denote by
$C^{1,\alpha}_0 ([a,b])$ the Banach space which is the closure of $C^{1,\alpha}_c (]a,b[)$ in $C^{1,\alpha} ([a,b])$. Thus $C^{1,\alpha}_0 ([a,b])$ is to the subspace of $C^{1,\alpha}$ functions $\varphi$ for which $\varphi (a) = \varphi'(a)=
\varphi(b) = \varphi' (b) = 0$.
If $h$ is a continuous function, we then regard $h$ as an element of the dual space $(C^{1,\alpha}_0 ([a,b]))^*$ after identifying it with the linear map
\[
\varphi \mapsto \int h \varphi\, .
\]

\begin{lemma}\label{l:product}
Let $\alpha > \frac{1}{2}$ and $[a,b]\subset \mathbb R$ a closed interval. Consider
the bilinear map $\mathcal{B}: C^\alpha ([a,b])\times C^1 ([a,b]) \ni (f,g) \mapsto f g' \in C ([a,b])$. Then the map extends to a unique continuous bilinear map $\mathscr{B}: C^\alpha ([a,b]) \times C^\alpha ([a,b]) \to (C^{1,\alpha}_0 ([a,b]))^*$.
\end{lemma}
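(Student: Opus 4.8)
The plan is to define $\mathscr{B}(f,g)$ by first integrating by parts on the dense subspace $C^\alpha\times C^1$ and then showing the resulting pairing is bounded in the $C^\alpha\times C^\alpha$ norm when tested against $C^{1,\alpha}_0$. Concretely, for $(f,g)\in C^\alpha([a,b])\times C^1([a,b])$ and $\varphi\in C^{1,\alpha}_c(]a,b[)$, the classical product $fg'$ pairs with $\varphi$ as
\begin{equation}\label{e:ibp-plan}
\int_a^b f g' \varphi = -\int_a^b g (f\varphi)' = -\int_a^b g f \varphi' - \int_a^b g \varphi\, df,
\end{equation}
where the last term is a priori only formal. The honest way to proceed is to avoid differentiating $f$: since $\varphi$ has compact support in $]a,b[$ and is $C^{1,\alpha}$, write $\int f g'\varphi = -\int fg\varphi' - \int g(f\varphi' \text{ correction})$ — more cleanly, symmetrize by using $\int f g'\varphi + \int g f'\varphi = \int (fg)'\varphi = -\int fg\varphi'$, so that one is led to control the antisymmetric part. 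I would instead take the most direct route: define
\begin{equation}\label{e:def-B-plan}
\langle \mathscr{B}(f,g),\varphi\rangle := -\int_a^b f\,g\,\varphi' - \int_a^b \big(f(x)-f\big)\,g'\,\varphi\,\ldots
\end{equation}
no — the genuinely robust definition is via a commutator/double-difference estimate. For $\varphi\in C^{1,\alpha}_c$, observe
\[
\int_a^b f g' \varphi = \int_a^b g'\,\big(f\varphi - \text{(something smooth in which only }g\text{ is differentiated)}\big),
\]
so the only term that needs new meaning is $\int g' (f-f(x_0))\varphi$ near points, which is handled by the Hölder bound $|f(x)-f(y)|\le [f]_\alpha|x-y|^\alpha$ together with $\alpha>\tfrac12$.

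The key estimate, carried out on the dense set and then extended by continuity, is the bilinear bound
\begin{equation}\label{e:key-est-plan}
\Big|\int_a^b f g' \varphi\Big| \le C(b-a,\alpha)\,\|f\|_{C^\alpha}\,\|g\|_{C^\alpha}\,\|\varphi\|_{C^{1,\alpha}_0},
\end{equation}
valid for $(f,g)\in C^\alpha\times C^1$; by density of $C^1$ in $C^\alpha$ (in a weaker topology, or rather: $C^\infty$ is dense in $C^\alpha$ only in the $C^\beta$ norm for $\beta<\alpha$, so one must be slightly careful and use that both sides of \eqref{e:key-est-plan} are continuous in the stated norms while $\mathcal B$ is already defined on the dense subspace) one obtains the unique continuous extension $\mathscr B$. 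To prove \eqref{e:key-est-plan} I would integrate by parts once to move the derivative off $g$ in the regular part and isolate a Hölder–Hölder paraproduct-type term; splitting $[a,b]$ into dyadic pieces relative to a point, or using a Littlewood–Paley / Bony decomposition of $f$ and $g$, the dangerous frequency interaction contributes $\sum_j 2^{-j\alpha}\,2^{-j\alpha}\,2^{j}\lesssim \sum_j 2^{j(1-2\alpha)}$ when also paired against the $2^{-j(1+\alpha)}$ decay coming from $\varphi\in C^{1,\alpha}$ — more precisely the $\varphi$ factor gives an extra $2^{-j}$ at worst from one derivative, and the sum $\sum_j 2^{j(1-2\alpha)}$ converges exactly because $\alpha>\tfrac12$. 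This is the heart of the matter: $\alpha>\tfrac12$ is precisely what makes $1-2\alpha<0$.

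Finally, uniqueness of the extension is automatic once one knows $C^\alpha\times C^1$ (equivalently a dense subset in the product topology on which $\mathscr B$ must agree with $\mathcal B$) is dense in $C^\alpha\times C^\alpha$ and $(C^{1,\alpha}_0)^*$ is a Banach space: a bounded bilinear map is determined by its values on a dense set. The main obstacle I anticipate is not the integration by parts but the careful bookkeeping in \eqref{e:key-est-plan} — in particular getting a bound purely in terms of the full $C^\alpha$ norms of $f$ and $g$ (not just seminorms, because of the endpoint/low-frequency terms) and keeping track that the derivative on $\varphi$ is "spent" against exactly the right frequency block so that the geometric series closes at the threshold $\alpha=\tfrac12$. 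An alternative to Littlewood–Paley that I might prefer for a clean self-contained write-up is to mollify: set $f_\varepsilon = f * \rho_\varepsilon$, estimate $\int (f-f_\varepsilon) g'\varphi$ and $\int f_\varepsilon g'\varphi$ separately using $\|f-f_\varepsilon\|_\infty\lesssim \varepsilon^\alpha[f]_\alpha$ and $\|f_\varepsilon'\|_\infty\lesssim \varepsilon^{\alpha-1}[f]_\alpha$, integrate by parts in the second (now legitimately, moving $d$ onto $\varphi$ and the smooth $f_\varepsilon$), optimize in $\varepsilon$ pointwise-in-scale, and sum — this reproduces the same $\sum 2^{j(1-2\alpha)}$ convergence but avoids invoking Besov machinery.
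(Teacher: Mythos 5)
Your overall architecture coincides with the paper's: prove the bilinear bound $\bigl|\int fg'\varphi\bigr|\leq C\|f\|_{C^\alpha}\|g\|_{C^\alpha}\|\varphi\|_{C^{1,\alpha}}$ on a subspace where $fg'$ is classically defined, extend by continuity, and get uniqueness from the fact that any $g\in C^\alpha$ is a $C^\beta$-limit ($\beta<\alpha$) of Lipschitz functions with uniformly bounded $C^\alpha$ norm --- a subtlety you correctly flag, since $C^1$ is not norm-dense in $C^\alpha$. Where you differ is in the implementation of the key estimate: the paper first reduces to the circle (extending $f,g$ periodically by reflection and $\varphi$ by zero, which is legitimate precisely because $\varphi$ and $\varphi'$ vanish at the endpoints), writes everything in Fourier series, and bounds the convolution sum $\sum_{k,\ell} i(k-\ell)\hat f_\ell\hat g_{k-\ell}\hat\varphi_k$ by splitting into $|\ell|\leq|k|$ and $|\ell|\geq |k|$, using Bernstein's embedding $C^\alpha\subset H^\beta$ and the decay $|\hat\varphi_k|\lesssim (1+|k|)^{-1-\alpha}$; the resulting series $\sum_k(1+|k|)^{-\alpha-\beta}$ closes because one may take $\beta<\alpha$ with $\alpha+\beta>1$. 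This is exactly your Littlewood--Paley count $\sum_j 2^{j(1-2\alpha)}$ carried out by hand, so the two routes buy essentially the same thing; the paper's version is self-contained and avoids Besov machinery, while yours would still require an extension step from $[a,b]$ to $\mathbb R$ or $\mathbb S^1$ before the dyadic decomposition applies. One caution about your mollification variant: a single scale $\varepsilon$ does not close, since after integrating by parts the term $\int g\, f_\varepsilon'\varphi$ is only $O(\varepsilon^{\alpha-1})$; you genuinely need the telescoping over dyadic scales $\varepsilon_j=2^{-j}$, estimating each difference $\int f_{\varepsilon_j}g_{\varepsilon_j}'\varphi-\int f_{\varepsilon_{j+1}}g_{\varepsilon_{j+1}}'\varphi$ by $O(2^{-j(2\alpha-1)})$, which is what your phrase ``pointwise-in-scale, and sum'' must mean for the argument to work.
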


\begin{proof}
First of all, by translating and dilating we can assume that $[a,b] = [0, \pi]$. Secondly, every $C^\alpha$ function on $[0, \pi]$ can be extended to a $C^\alpha$ periodic function on $[-\pi, \pi]$ by reflection, whereas every $C^{1,\alpha}_0$ function on $[0, \pi]$ can be extended to a $C^{1,\alpha}$ periodic function on $[-\pi, \pi]$ by setting it equal to $0$ on $[-\pi, 0]$. 
The first extension maps $C^1$ functions into Lipschitz maps. If $f\in L^\infty (\mathbb S^1)$ and $g\in {\rm Lip} (\mathbb S^1)$, then 
$f g'$ is a well defined $L^\infty$ function on $[-\pi, \pi]$ by Rademacher's theorem, which in turn we can identify with an element of $(C^{1,\alpha} (\mathbb S^1))^*$ by integration. On the other hand for maps $\varphi\in C^{1,\alpha} (\mathbb S^1)$ which vanish on $[-\pi, 0]$ the integral $\int fg' \varphi$ takes place only on $[0, \pi]$. We have thus reduced to prove that the bilinear map 
\[
C^\alpha (\mathbb S^1) \times \Lip (\mathbb S^1) \ni (f,g) \mapsto fg'\in (C^{1,\alpha} (\mathbb S^1))^*
\] 
extends to a unique continuous bilinear operator $\mathscr{B} : C^\alpha (\mathbb S^1)\times C^\alpha (\mathbb S^1) \to\left ( C^{1,\alpha} (\mathbb S^1)\right )^*$. The uniqueness part is a consequence of the fact that for every $\psi \in C^\alpha (\mathbb S^1)$ we can find a sequence of Lipschitz maps $\{\psi_k\}$ which converge to $\psi$ in $C^\beta$ for every $\beta < \alpha$
and such that $\|\psi_k\|_{C^\alpha} \leq \|\psi\|_{C^\alpha}$. We thus just need to show the existence of a constant $C$ such that the estimate
\begin{equation}\label{e:estimate}
\left| \int f g' \varphi \right| \leq C \|f\|_{C^\alpha} \|g\|_{C^\alpha} \|\varphi\|_{C^{1,\alpha}}
\end{equation}
holds for every triple $f\in C^\alpha$, $g\in \Lip$ and $\varphi \in C^{1,\alpha} (\mathbb S^1)$. Taking the supremum over $\varphi\in C^{1,\alpha}$ with $\|\varphi\|_{C^{1,\alpha}} \leq 1$ the latter estimate gives indeed the bound
\begin{equation}
\|\mathcal{B} (f,g)\|_{(C^{1,\alpha})^*} \leq C \|f\|_{C^\alpha} \|g\|_{C^\alpha} \qquad \forall (f,g)\in \Lip \times C^\alpha\, .
\end{equation}
In turn this implies the local uniform continuity of the bilinear map $\mathcal{B}$, since we can simply use the bilinearity and the triangle inequality to estimate
\[
\|\mathcal{B} (f,g) - \mathcal{B} (h, k)\|_{(C^{1,\alpha})^*} \leq \|f\|_{C^\alpha} \|g-k\|_{C^\alpha} + \|f-h\|_{C^\alpha} \|k\|_{C^\alpha}\, .
\]
The existence and uniqueness of the continuous extension $\mathscr{B}$ is then an obvious fact.

\medskip

We next observe that, by a standard approximation procedure, it suffices to prove the estimate \eqref{e:estimate}
for a triple of smooth periodic functions. Indeed we remind the reader that, although $C^\infty$ is not dense in the strong topology of $C^\alpha$ (nor in that of $\Lip$), given a triple $(f, g, \varphi)\in C^\alpha\times W^{1,\infty} \times C^{1, \alpha}$ we can find a sequence $(f_k, g_k, \varphi_k)\in C^\infty \times C^\infty \times C^\infty$ such that:
\begin{itemize}
\item $\lim_k \|f_k -f\|_{C^0} = 0$ and $\|f_k\|_{C^\alpha}\leq \|f\|_{C^\alpha}$;
\item $g_k' \rightharpoonup^* g'$ in $L^\infty$ and $\|g_k\|_{C^\alpha} \leq \|g\|_{C^\alpha}$;
\item $\lim_k \|\varphi_k - \varphi\|_{C^0} = 0$ and $\|\varphi_k\|_{C^{1,\alpha}}\leq \|\varphi\|_{C^{1,\alpha}}$.
\end{itemize}
The conditions above are enough to infer
\[
\lim_{k\to \infty} \int f_k g_k' \varphi_k = \int f g' \varphi
\]
and thus it suffices to show that
\[
\left|\int f_k g_k' \varphi_k \right| \leq \|f_k\|_{C^\alpha}\|g_k\|_{C^\alpha} \|\varphi_k\|_{C^{1, \alpha}}\, .
\]
Fix therefore a triple $f,g, \varphi \in C^\infty (\mathbb S^1)$ and let 
\begin{align}
& f (x) = \sum_{k\in \mathbb Z} \hat{f}_k e^{ik\cdot x}\\
& g(x) = \sum_{k\in \mathbb Z} \hat{g}_k e^{ik\cdot x}\\
&\varphi (x) = \sum_{k\in \mathbb Z} \hat{\varphi}_k e^{ik\cdot x}\, 
\end{align}
be their Fourier expansions. 

We then know that the Fourier coefficients are necessarily real and that
\begin{equation}\label{e:Plancherel}
\int f g' \varphi = \sum_{(k, \ell) \in \mathbb Z^2} i (k-\ell) \hat{f}_\ell \hat{g}_{k-\ell} \hat{\varphi}_k\, .
\end{equation}
Recall next that, by Bernstein's inequality, $C^\alpha \subset H^\beta$ for every $\beta < \alpha$, thus 
\begin{align}
\sum_k (1+|k|^{2\beta})|\hat{f}_k|^2 & \leq C (\alpha, \beta) \|f\|_{C^\alpha}^2 \qquad \forall \beta < \alpha\\
\sum_k (1+|k|^{2\beta})|\hat{g}_k|^2 & \leq C (\alpha, \beta) \|g\|_{C^\alpha}^2\qquad \forall \beta < \alpha\, .
\end{align}
We finally need the simple estimate
\begin{equation}
|\hat{\varphi}_k|  \leq C \|\varphi\|_{C^{1,\alpha}} (1+ |k|)^{-1-\alpha}\label{e:decay}
\end{equation}
We are now ready to conclude and we start observing
\begin{align}
\left| \sum_\ell i (k-\ell) \hat{f}_\ell \hat{g}_{k-\ell}\right| & 
\leq |2k|^{1-\beta} \sum_{-k \leq \ell \leq k} |k-\ell|^\beta |\hat{g}_{k-\ell}||\hat{f}_\ell|
+ \sqrt{2} \sum_{\ell\leq -k, \ell \geq k} \sqrt{|k-\ell|} \sqrt{|\ell|} |\hat{g}_{k-\ell}| |\hat{f}_\ell|\nonumber\\
&\leq |2k|^{1-\beta} \left(\sum_j |j|^{2\beta} |\hat{g}_j|^2\right)^{\sfrac{1}{2}}
\left(\sum_j |\hat{f}_j|^2\right)^{\sfrac{1}{2}}\nonumber\\ 
&\qquad + \sqrt{2} \left(\sum_j |j| |\hat{g}_j|^2\right)^{\sfrac{1}{2}} \left(\sum_j |j| |\hat{f}_j|^2\right)^{\sfrac{1}{2}}\nonumber\\
&\leq C (1+ |k|)^{1-\beta} \|f\|_{C^\alpha} \|g\|_{C^{\alpha}}\, .\label{e:split_est}
\end{align}
Combining \eqref{e:Plancherel}, \eqref{e:decay} and \eqref{e:split_est} we then conclude
\begin{align}
\left|\int f g' \varphi\right| &\leq C \|f\|_{C^\alpha} \|g\|_{C^\alpha} \|\varphi\|_{C^{1,\alpha}} 
\sum_k (1+|k|)^{-\alpha-\beta} \leq C \|f\|_{C^\alpha} \|g\|_{C^\alpha} \|\varphi\|_{C^{1,\alpha}}\, ,
\end{align}
where we have used that, since we are free to choose any $\beta < \alpha$ and $\alpha > \frac{1}{2}$, we can impose $\alpha + \beta >1$, which ensures the convergence of the series $\sum_k (1+|k|)^{-\alpha-\beta}$. 
\end{proof}

\subsection{Connection}

Consider now a $C^2$ Riemannian manifold $(\Sigma, g)$ with $C^2$ boundary, a $C^2$ curve $\gamma: [a,b]\to \Sigma$ and a $C^1$ vector field along $\gamma$. In local coordinates we can write
\begin{align}
W (t) &= \sum_i W^i (t) \frac{\partial}{\partial x_i}\, , \\
\dot\gamma (t) &= \sum_i \dot\gamma^i (t) \frac{\partial}{\partial x_i}\, .
\end{align}
We then know that $\nabla_{\dot{\gamma}} W$ is given by the formula 
\begin{equation}\label{e:connection}
\frac{dW^i}{dt} \frac{\partial}{\partial x_i} +  \sum_{j, k} \Gamma^i_{jk} (\gamma)\, \dot\gamma^j\, W^k \frac{\partial}{\partial x_i}\, ,
\end{equation}
where the $C^1$ functions $\Gamma^i_{jk}$ are the Christoffel symbols of the metric $g$.

Let $u: \Sigma \to \mathbb R^m$ be a $C^{1,\alpha}$ isometric immersion. The vector field
$u_* W = \sum W^i \frac{\partial u}{\partial x_i}$ can thus be seen as a $C^\alpha$ map
$u_* W : [a,b] \to \mathbb R^m$. In particular, if $\alpha > \frac{1}{2}$ we can use Lemma \ref{l:product} to make sense
of the scalar product
\begin{equation}\label{e:product}
\left\langle \frac{d}{dt} u_* W,  \frac{\partial u}{\partial x_\ell}\right\rangle\, \, . 
\end{equation}
For smooth isometric immersions \eqref{e:product} and \eqref{e:connection} are then related by the identity
\begin{equation}\label{e:connection2}
\left\langle \frac{d}{dt} (u_* W (\gamma)),  \frac{\partial u}{\partial x_\ell} (\gamma)\right\rangle =
\sum_i \left(\frac{d}{dt} (W^i (\gamma)) + \sum_{j, k} \Gamma^i_{jk} (\gamma)\, \dot\gamma^j\, W^k (\gamma) \right) g_{i\ell} (\gamma)\, .
\end{equation}
The latter is just the classical relation between the Levi-Civita connection compatible with $g$ and the Levi-Civita connection
compatible with the standard Euclidean metric $e$ of the ambient Euclidean space. 
Lemma \ref{l:product} allows not only to make sense of the left hand side of the identity for $C^{1, \alpha}$ immersions when $\alpha > \frac{1}{2}$, but it also implies that, under the same regularity assumption, the identity \eqref{e:connection2} remains valid. 

\begin{proposition}\label{p:p=p}
Let $(\Sigma, g)$ be a $C^2$ Riemannian manifold with $C^2$ boundary, let $\gamma:[a,b]\to \Sigma$ be a $C^2$ curve, let $W$ be a 
$C^1$ vector field along $\gamma$ and let $u: \Sigma \to \mathbb R^m$ be an isometric immersion of class $C^{1,\alpha}$ for some $\alpha > \frac{1}{2}$. Then \eqref{e:connection2} holds. 
\end{proposition}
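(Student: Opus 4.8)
The plan is to reduce the identity \eqref{e:connection2} to an approximation statement and then invoke Lemma \ref{l:product} to pass to the limit. First I would recall that for a \emph{smooth} isometric immersion $u$, the identity \eqref{e:connection2} is the classical relation between the intrinsic Levi-Civita connection of $g = u^\sharp e$ and the tangential part of the flat connection of $\mathbb R^m$; this is proved by differentiating $\langle u_*W,\partial u/\partial x_\ell\rangle$ directly, using $\langle \partial u/\partial x_i,\partial u/\partial x_\ell\rangle = g_{i\ell}$ together with the expression of the Christoffel symbols in terms of derivatives of the $g_{i\ell}$. So the content of the Proposition is purely the extension of this identity to the low-regularity regime $\alpha > \tfrac12$, where the left-hand side only makes sense as an element of $(C^{1,\alpha}_0([a,b]))^*$ via Lemma \ref{l:product}.

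The key step is a careful mollification argument. I would set $u_\varepsilon := u * \rho_\varepsilon$ (extending $u$ slightly past $\partial\Sigma$ if $\gamma$ touches the boundary, or working in a fixed chart and mollifying in the chart), so that $u_\varepsilon$ is smooth and $u_\varepsilon \to u$ in $C^1$ on a neighbourhood of $\gamma([a,b])$; in particular $\partial u_\varepsilon/\partial x_i \to \partial u/\partial x_i$ uniformly and $\|u_\varepsilon\|_{C^{1,\alpha}}$ stays bounded. The metrics $g_\varepsilon := u_\varepsilon^\sharp e$ then converge uniformly to $g$, but \emph{not} in $C^1$, so $u_\varepsilon$ is only \emph{approximately} isometric and one cannot simply apply the classical identity to $u_\varepsilon$ and pass to the limit naively. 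Instead I would write the classical computation for $u_\varepsilon$ honestly:
\begin{equation}\label{e:approx_identity}
\left\langle \frac{d}{dt}(u_{\varepsilon*}W(\gamma)), \frac{\partial u_\varepsilon}{\partial x_\ell}(\gamma)\right\rangle
= \sum_i \frac{d}{dt}(W^i(\gamma))\,(g_\varepsilon)_{i\ell}(\gamma)
+ \sum_{i,j,k} W^k(\gamma)\dot\gamma^j\,\partial_j\!\left\langle \frac{\partial u_\varepsilon}{\partial x_k},\frac{\partial u_\varepsilon}{\partial x_\ell}\right\rangle(\gamma) ,
\end{equation}
and then use the elementary algebraic identity $\Gamma^i_{jk}g_{i\ell} = \tfrac12(\partial_j g_{k\ell} + \partial_k g_{j\ell} - \partial_\ell g_{jk})$ — but now I must symmetrize the raw derivative terms $\partial_j\langle \partial_k u_\varepsilon,\partial_\ell u_\varepsilon\rangle$ appearing in \eqref{e:approx_identity}. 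The point is that $\partial_j\langle\partial_k u_\varepsilon,\partial_\ell u_\varepsilon\rangle = \langle\partial_j\partial_k u_\varepsilon,\partial_\ell u_\varepsilon\rangle + \langle\partial_k u_\varepsilon,\partial_j\partial_\ell u_\varepsilon\rangle$, and after contracting against $W^k\dot\gamma^j$ and summing, the second-derivative terms can be recombined exactly as in the smooth identity; what survives involves only first derivatives of $u_\varepsilon$ paired through the bilinear operator $\mathscr{B}$ of Lemma \ref{l:product}, plus terms that are \emph{bona fide continuous functions} (namely $\frac{d}{dt}(W^i)\,(g_\varepsilon)_{i\ell}$, which converges uniformly to $\frac{d}{dt}(W^i)\,g_{i\ell}$).

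The main obstacle, as suggested above, is controlling the passage to the limit in the terms that are genuinely distributional, i.e. those of the form $\int (W^k\dot\gamma^j)\,\langle\partial_j(\partial_k u_\varepsilon),\partial_\ell u_\varepsilon\rangle\,\varphi$ for test functions $\varphi\in C^{1,\alpha}_0([a,b])$. Here I would integrate by parts to move the $\partial_j$ derivative (which, after contraction with $\dot\gamma^j$, becomes $\frac{d}{dt}$ along $\gamma$) onto the smooth factor $(W^k\dot\gamma^j\varphi)\circ$(something) and onto $\partial_k u_\varepsilon$, landing precisely in the setting $\mathcal{B}(f_\varepsilon,g_\varepsilon) = f_\varepsilon g_\varepsilon'$ with $f_\varepsilon = \partial_\ell u_\varepsilon\circ\gamma$, $g_\varepsilon = \partial_k u_\varepsilon\circ\gamma$ (componentwise in $\mathbb R^m$), both bounded in $C^\alpha([a,b])$ and converging uniformly. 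By the continuity of the extension $\mathscr B$ established in Lemma \ref{l:product} — more precisely its joint continuity with respect to $C^0$-convergence of the factors together with uniform $C^\alpha$-bounds, which is exactly what the approximation step at the end of that lemma's proof provides — these distributional pairings converge to the corresponding pairings for $u$, and the limiting identity is \eqref{e:connection2} tested against an arbitrary $\varphi\in C^{1,\alpha}_0([a,b])$. Since both sides of \eqref{e:connection2} are well-defined elements of $(C^{1,\alpha}_0([a,b]))^*$, equality as distributions is equality in the asserted sense, completing the proof. A minor technical point to handle with care is the localization to a single coordinate chart (the formula \eqref{e:connection} is chart-dependent) and the treatment of the endpoints $t=a,b$ when $\gamma$ meets $\partial\Sigma$: there one uses that the test functions in $C^{1,\alpha}_0$ vanish to first order at the endpoints, so no boundary contributions arise in the integrations by parts.
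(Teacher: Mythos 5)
Your overall strategy (mollify $u$, use the classical identity for the smooth approximations, pass to the limit via Lemma \ref{l:product}) is the same as the paper's, but there is a genuine gap at the decisive step, and it is flagged by your own (incorrect) remark that the metrics $g_\varepsilon = u_\varepsilon^\sharp e$ ``converge uniformly to $g$, but not in $C^1$.'' After the Koszul symmetrization, the terms you must control along $\gamma$ are of the form
\[
\sum_j \dot\gamma^j\,\tfrac12\bigl(\partial_j (g_\varepsilon)_{k\ell}+\partial_k (g_\varepsilon)_{j\ell}-\partial_\ell (g_\varepsilon)_{jk}\bigr)(\gamma(t))\, .
\]
Only the first of the three summands is a derivative \emph{along} the curve, i.e.\ $\frac{d}{dt}\bigl((g_\varepsilon)_{k\ell}\circ\gamma\bigr)$, and hence accessible to integration by parts in $t$ and to the one--dimensional bilinear operator $\mathscr B$ of Lemma \ref{l:product}. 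The other two are restrictions to $\gamma$ of derivatives of $g_\varepsilon$ in directions transverse to the curve: they are not of the form $fg'$ on $[a,b]$, you cannot move the derivative onto the test function, and uniform convergence $g_\varepsilon\to g$ gives you nothing about them. Your claim that ``what survives involves only first derivatives of $u_\varepsilon$ paired through $\mathscr B$, plus bona fide continuous functions'' is therefore unjustified precisely for these transverse terms.

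The missing ingredient is the quadratic mollification estimate (Lemma \ref{l:CET}, from \cite{CDS10}): since $u$ is \emph{isometric} with $u^\sharp e = g\in C^2$ and $\alpha>\frac12$, one has $\|(u\ast\varphi_\varepsilon)^\sharp e - g\|_{C^1(K)} = O(\varepsilon^{2\alpha-1})\to 0$, so the mollified metrics \emph{do} converge in $C^1$ and their Christoffel symbols converge uniformly to those of $g$. This is exactly where the hypothesis $\alpha>\frac12$ enters on the right-hand side of the identity (it enters a second, independent time in Lemma \ref{l:product} for the left-hand side), and it is the step your argument replaces with an integration by parts that is not available. With Lemma \ref{l:CET} in hand the proof closes exactly as you outline; without it, it does not. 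Your treatment of curves touching the boundary (using that test functions vanish to first order at the endpoints) differs from the paper's (which approximates by interior curves), but that part is fine.
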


\begin{proof}[Proof of Theorem \ref{t:criticality}(a)]
The proposition implies part (a) of Theorem \ref{t:criticality} right away. Indeed, fix a point $p\in \partial \Sigma_a$ and choose local coordinates in a neighborhood $U$ of $p$ so that 
$X = \frac{\partial}{\partial x_2}$ on $U$ and $\frac{\partial}{\partial x_1}$ is tangent to $\Sigma_a$. Choose then $W$ tangent to $\Sigma_a$ and parametrize the curve $\gamma = \Sigma_a$ so that $\frac{d}{dt} v_* W = Z$. If we first use \eqref{e:connection2} for the standard embedding, we easily see that
\[
\sum_i \left(\frac{d}{dt} (W^i (\gamma)) + \sum_{j, k} \Gamma^i_{jk} (\gamma)\, \dot\gamma^j\, W^k\right) g_{i2} (\gamma) = a\, .
\]
If we then use it for $u=v$ we conclude
\[
\langle Y, Z \rangle = \left\langle \frac{\partial u}{\partial x_2} (\gamma), \frac{d}{dt} (u_* W (\gamma)) \right\rangle =
a\, .\qedhere
\]
\end{proof}

In order to prove the above proposition we recall the quadratic estimate in \cite[Proposition 1.6]{CDS10}: 
\begin{lemma}[Quadratic estimate]\label{l:CET}
Let $\Omega\subset \R^{n}$ be an open set, $v\in C^{1,\alpha}(\Omega, \R^{m})$ with $v^{\sharp}e\in C^{2}$ and $\varphi\in C^{\infty}(\R^{n})$ a standard symmetric convolution kernel. Then, for every compact set $K\subset \Omega$ 
 \[\| (v\ast \varphi_\epsilon)^* e -v^* e \|_{C^{1}(K)} = O(\epsilon^{2\alpha -1})\,.\]
\end{lemma}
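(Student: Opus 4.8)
The plan is to compare the metric pulled back by the mollified map, $(v*\varphi_\epsilon)^\sharp e$, with the mollification $(v^\sharp e)*\varphi_\epsilon$ of the pulled-back metric, and to control their difference by a Constantin--E--Titi--type commutator estimate; since $v^\sharp e\in C^2$, the mollification $(v^\sharp e)*\varphi_\epsilon$ is then compared with $v^\sharp e$ by elementary means. \textbf{Step 1 (set-up and splitting).} Fix a compact $K\subset\Omega$, set $\epsilon_0:=\tfrac13\,\mathrm{dist}(K,\partial\Omega)>0$ and fix a compact $K'$ with $K+\overline{B_{2\epsilon_0}}\subset K'\subset\Omega$; all estimates below are for $0<\epsilon<\epsilon_0$, which is the only range relevant to the asymptotic claim. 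Writing $g:=v^\sharp e\in C^2$, I would split
\[
(v*\varphi_\epsilon)^\sharp e-v^\sharp e\;=\;\underbrace{\big[(v*\varphi_\epsilon)^\sharp e-g*\varphi_\epsilon\big]}_{=:E_\epsilon}\;+\;\big[g*\varphi_\epsilon-g\big]\,,
\]
where $g*\varphi_\epsilon$ is understood entrywise. The second bracket is controlled by the standard mollification estimate for a $C^2$ function, $\|g*\varphi_\epsilon-g\|_{C^1(K)}\le C\|g\|_{C^2}\epsilon$; since $\alpha\le 1$ one has $\epsilon\le\epsilon^{2\alpha-1}$ for $\epsilon\le 1$, so this term is $O(\epsilon^{2\alpha-1})$ and everything reduces to bounding $E_\epsilon$ in $C^1(K)$.

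\textbf{Step 2 (commutator identity and the $C^0$ bound).} Since $v\in C^1$, $\partial_i(v*\varphi_\epsilon)=(\partial_i v)*\varphi_\epsilon$, so the $(i,j)$ entry of $E_\epsilon$ is $\langle(\partial_i v)*\varphi_\epsilon,(\partial_j v)*\varphi_\epsilon\rangle-\langle\partial_i v,\partial_j v\rangle*\varphi_\epsilon$. Expanding the convolutions, I would verify the symmetric (double-shift) identity
\[
(E_\epsilon)_{ij}(x)=-\tfrac12\int\!\!\int\varphi_\epsilon(y)\varphi_\epsilon(z)\,\big\langle\partial_i v(x-y)-\partial_i v(x-z),\,\partial_j v(x-y)-\partial_j v(x-z)\big\rangle\,dy\,dz\,.
\]
On the support of $\varphi_\epsilon(y)\varphi_\epsilon(z)$ one has $|(x-y)-(x-z)|=|z-y|\le 2\epsilon$, so by Hölder continuity of $Dv$ on $K'$ the integrand is pointwise $\le C[Dv]_{C^\alpha(K')}^2\,\epsilon^{2\alpha}$; as $\|\varphi_\epsilon\|_{L^1}=1$ this already gives $\|E_\epsilon\|_{C^0(K)}\le C\epsilon^{2\alpha}$.

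\textbf{Step 3 (the $C^1$ bound --- the main point).} The difficulty is that $Dv$ is only $C^\alpha$, so one cannot differentiate the integrand of the identity above in $x$. The device I would use is the change of variables $p=x-y$, $q=x-z$, which transfers the $x$-dependence entirely onto the two smooth mollifiers:
\[
(E_\epsilon)_{ij}(x)=-\tfrac12\int\!\!\int\varphi_\epsilon(x-p)\varphi_\epsilon(x-q)\,\big\langle\partial_i v(p)-\partial_i v(q),\,\partial_j v(p)-\partial_j v(q)\big\rangle\,dp\,dq\,.
\]
Now differentiation under the integral sign is legitimate, the derivative lands on $\varphi_\epsilon(x-p)$ and $\varphi_\epsilon(x-q)$, and the $p\leftrightarrow q$ symmetry of the integrand makes the two resulting terms coincide (using $\langle-a,-b\rangle=\langle a,b\rangle$), so
\[
\partial_k(E_\epsilon)_{ij}(x)=-\int\!\!\int(\partial_k\varphi_\epsilon)(x-p)\,\varphi_\epsilon(x-q)\,\big\langle\partial_i v(p)-\partial_i v(q),\,\partial_j v(p)-\partial_j v(q)\big\rangle\,dp\,dq\,.
\]
On the support of the kernel one still has $|p-q|\le 2\epsilon$, hence the inner product is again $\le C[Dv]_{C^\alpha(K')}^2\,\epsilon^{2\alpha}$; combining with $\|\partial_k\varphi_\epsilon\|_{L^1}\le C\epsilon^{-1}$ and $\|\varphi_\epsilon\|_{L^1}=1$ yields $\|\partial_k(E_\epsilon)_{ij}\|_{C^0(K)}\le C[Dv]_{C^\alpha(K')}^2\,\epsilon^{2\alpha-1}$. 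Summing over $i,j,k$ gives $\|E_\epsilon\|_{C^1(K)}\le C\|v\|_{C^{1,\alpha}(K')}^2\,\epsilon^{2\alpha-1}$, which together with Step 1 proves $\|(v*\varphi_\epsilon)^\sharp e-v^\sharp e\|_{C^1(K)}=O(\epsilon^{2\alpha-1})$.

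I expect Step 3 to be the only genuinely delicate point: the key is that the symmetric commutator identity exhibits $E_\epsilon$ as an integral of a ``difference-of-differences'' expression in $Dv$, a structure that survives the change of variables moving the derivative onto the mollifier and therefore still delivers the gain $\epsilon^{2\alpha}$ against the $\epsilon^{-1}$ cost of that derivative; the remaining pieces are routine mollification bookkeeping.
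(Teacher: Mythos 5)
Your proof is correct, and it is essentially the approach the paper takes: the paper does not prove Lemma \ref{l:CET} itself but quotes \cite[Prop.~1.6]{CDS10}, whose argument is exactly your splitting through $g\ast\varphi_\epsilon$ (with $g=v^\sharp e\in C^2$ handled by \eqref{e:mollify4}) plus the Constantin--E--Titi commutator bound, which in the paper is already recorded as \eqref{e:mollify3} of Lemma \ref{l:mollify} and would give the $C^0$ and $C^1$ estimates of your $E_\epsilon$ in one line. Your only deviation is cosmetic but pleasant: you re-derive the commutator bound via the symmetric double-mollifier identity, which lets the $x$-derivative fall on the kernels and avoids the extra cross term $(f-f\ast\varphi_\epsilon)(g-g\ast\varphi_\epsilon)$ appearing in the classical CET identity.
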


\begin{proof}[Proof of Proposition \ref{p:p=p}] First observe that without loss of generality we can assume that $W$ is defined on the whole manifold. Secondly, observe that it suffices to prove the identity
for curves $\gamma$ which lie in the interior. Consider indeed a $C^2$ curve $\gamma$ which touches the boundary of the manifold and approximate it in $C^2$ with a sequence of
curves $\gamma_j$ which are contained in the interior. Then the maps $W (\gamma_j)$ converge in $C^1$ to $W (\gamma)$. As such, the maps
$u_* W(\gamma_j)$ are uniformly bounded in $C^\alpha$ and converge in $C^{\bar\alpha}$ to $u_* W (\gamma)$ for every $\bar\alpha < \alpha$. Since we can choose $\bar\alpha > \frac{1}{2}$, Lemma \ref{l:product} implies that the distributions
\[
\left\langle \frac{d}{dt} (u_* W (\gamma_j)), \frac{\partial u}{\partial x_\ell} (\gamma_j)\right\rangle
\]
converge to the distribution
\begin{equation}\label{e:still_prod}
\left\langle \frac{d}{dt} (u_* W (\gamma)), \frac{\partial u}{\partial x_\ell} (\gamma)\right\rangle\, .
\end{equation}
Moreover, obviously
\[
\frac{d}{dt} (W^i  (\gamma_j)) + \sum_{k,\ell} \Gamma^i_{k\ell} (\gamma_j) \dot{\gamma}_j^k W^\ell (\gamma_j)
\]
converge uniformly to
\begin{equation}\label{e:still_Chris}
\frac{d}{dt} (W^i (\gamma)) + \sum_{k,\ell} \Gamma^i_{k\ell} (\gamma) \dot{\gamma}^k W^\ell (\gamma)
\end{equation}
Fix now a curve $\gamma$ in the interior and a coordinate patch $U$ compactly contained in another coordinate patch $V$, both not intersecting the boundary of the manifold.
We can smooth $u$ by convolution with a standard kernel by $u\ast \varphi_\varepsilon$. For $\varepsilon$ small enough the convolution is well defined on the coordinate patch $U$.
Clearly the maps $(u\ast \varphi_\varepsilon)_* W$ and $(u\ast \varphi_\varepsilon)_* \frac{\partial}{\partial x_i}$ are uniformly bounded in $C^\alpha$ and converge, as $\varepsilon \downarrow 0$, to
$u_* W$ and $u_* \frac{\partial}{\partial x_i}$ in $C^\beta$ for every $\beta <\alpha$. Choosing a $\beta > \frac{1}{2}$ we apply Lemma \ref{l:product} to conclude that the distributions
\begin{equation}\label{e:approx}
\left\langle \frac{d}{dt} (((u\ast \varphi_\varepsilon)_* W) (\gamma)), \frac{\partial  (u\ast \varphi_\varepsilon)}{\partial x_i} (\gamma)\right\rangle
\end{equation}
converge (weakly in the sense of distributions) to \eqref{e:still_prod}. On the other hand, from Lemma \ref{l:CET}, if $\Gamma_{\varepsilon, k, \ell}^i$ denote the Christoffel symbols of the metric $(u\ast \varphi_\varepsilon)^* e$, then we conclude that they converge uniformly to $\Gamma^i_{k, \ell}$. Thus 
\begin{equation}\label{e:approx2}
\frac{d}{dt} (W^i (\gamma)) + \sum_{k, \ell} \Gamma_{\varepsilon, k, \ell}^i (\gamma) \dot\gamma^k W^\ell (\gamma)
\end{equation}
converge uniformly to \eqref{e:still_prod} and $[(u\ast \varphi_\varepsilon)^* e]_{ij}$ converges uniformly to $g_{ij}$. In particular, 
\begin{equation}\label{e:approx3}
\sum_i \left(\frac{d}{dt} (W^i (\gamma)) + \sum_{k, \ell} \Gamma_{\varepsilon, k, \ell}^i (\gamma) \dot\gamma^k W^\ell (\gamma)\right) [(u\ast \varphi_\varepsilon)^* e]_{i\ell} (\gamma)
\end{equation}
converge uniformly to the right hand side of \eqref{e:connection2}. 
However, since $u_\varepsilon$ is smooth, \eqref{e:approx} and \eqref{e:approx3} are equal by classical differential geometry. Letting $\varepsilon\downarrow 0$ we then conclude 
\eqref{e:connection2}. 
\end{proof}
\section{Flexibility: Proof of Theorem \ref{t:criticality} (B)} 
The maps $v$ violating the rigidity are produced by convex integration. Their construction relies on the following more general theorem, the proof of which is the content of most of the remaining sections.  

\begin{theorem} \label{t:main} Fix two integers $n\geq 2$, $m\geq n(n+2)$ and a metric $g\in C^{2}$ on $\bar B_1\subset \R^{n}$. There exists $\bar \sigma_0 >0$ such that if $u\in C^{\infty}(\bar B_1,\R^{m})$ and $h\in C^{\infty}(\bar B_1)$ are such that  
\begin{align} 
 & h\equiv h(|x|) >0 \text{ on } \mathring{B_1},\, h(1) = 0\, \text{ and } h'(1) \neq 0 \label{tmain:ass1}\\
 &\text{u is strictly short in $\mathring{B_1}$ and }\\
 & (1-\bar \sigma_0) he\leq g-u^{\sharp}e\leq (1+\bar \sigma_0)he\text{ in a neighborhood of $\partial B_1$}\label{tmain:ass2}\,,
\end{align}
then for every $\alpha <\frac{1}{2}$, every constant $x_0\in \R^{n(n+1)}$ and every $\varepsilon>0 $ there exists $v\in C^{1,\alpha}(\bar B_1, \R^{m+n(n+1)})$ such that  
\begin{align*}
&  \|v- (u,x_0)\|_{C^{0}(\bar B_1, \R^{m+n(n+1)})} < \varepsilon\,, \\
&  v=(u,x_0) \text{ and } \nabla v = (\nabla u\, 0) ^{\intercal} \text{ on }\partial B_1\\
& g= v^{\sharp}e\,.
\end{align*}
In addition, if $u$ is injective then $v$ can be chosen to be injective as well. 
\end{theorem}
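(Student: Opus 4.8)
The plan is to deduce Theorem \ref{t:main} from an iteration of the Nash--Kuiper convex integration scheme, in the quantitative form of \cite{CDS10,DIS15}, adapted so as to respect the prescribed boundary behaviour and to carry the radial profile $h$ along. Starting from $v_0=(u,x_0)$, I would construct $v_q\in C^\infty(\bar B_1,\R^{m+n(n+1)})$ and parameters $\delta_q\downarrow 0$, $\lambda_q\uparrow\infty$ such that, for every $q$: (i) $v_q=(u,x_0)$ and $\nabla v_q=(\nabla u\, 0)^\intercal$ on $\partial B_1$; (ii) $g-v_q^\sharp e\le(1+\sigma_q)\delta_q\,he$ on $\bar B_1$ and $g-v_q^\sharp e\ge(1-\sigma_q)\delta_q\,he$ outside a collar of $\partial B_1$ of width $\to 0$, with $\sigma_q\le 2\bar\sigma_0<1$; (iii) $\|v_{q+1}-v_q\|_{C^0}\lesssim\delta_q^{1/2}\lambda_{q+1}^{-1}$, $\|v_{q+1}-v_q\|_{C^1}\lesssim\delta_q^{1/2}$ and $\|v_{q+1}\|_{C^2}\lesssim\delta_q^{1/2}\lambda_{q+1}$. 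The first stage $v_0\rightsquigarrow v_1$ is a ``large'' one: since $u$ is strictly short, $\mu:=g-u^\sharp e-\delta_1 he$ is a positive $C^2$ metric on all of $\bar B_1$ once $\delta_1$ is small -- by \eqref{tmain:ass2} in a collar of $\partial B_1$, and by strict shortness on the complementary compact set -- and a stage realising $\mu$ yields $v_1$ satisfying (ii); the bound $\|v_1-(u,x_0)\|_{C^0}<\varepsilon$ is then obtained by taking the first frequency $\lambda_1$ large, which is the only place where $\varepsilon$ enters ($\bar\sigma_0$ being fixed by the stage construction below). Running the remaining ``small'' stages with $\delta_q$ decaying geometrically and $\lambda_q$ of order $\delta_q^{-1}$ (the minimal admissible frequencies, see below), interpolation gives $\|v_{q+1}-v_q\|_{C^{1,\alpha}}\lesssim\|v_{q+1}-v_q\|_{C^1}^{1-\alpha}\|v_{q+1}-v_q\|_{C^2}^{\alpha}\lesssim\delta_q^{1/2}\lambda_{q+1}^{\alpha}\sim\delta_q^{1/2-\alpha}$, which is summable for every $\alpha<\tfrac12$; hence $v:=\lim_q v_q\in C^{1,\alpha}(\bar B_1,\R^{m+n(n+1)})$, while (ii) and $\delta_q\to0$ give $v^\sharp e=g$ and (i) gives the prescribed trace and gradient on $\partial B_1$.

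The core is a single stage $v_q\rightsquigarrow v_{q+1}$, which I would carry out in a bounded number of \emph{steps}, one per primitive metric. By the standard decomposition lemma, the metric to be cancelled, $g-v_q^\sharp e-c\,\delta_{q+1}he$ (a $C^2$ metric comparable to $\delta_qhe$ by (ii), $c$ a small constant), is written through a finite partition of unity as $\sum_i a_{q,i}^2\,\nu_i\otimes\nu_i$ with fixed $\nu_i\in\S^{n-1}$ and $C^2$ coefficients of size $a_{q,i}\sim\delta_q^{1/2}h^{1/2}$. The hypotheses $m\ge n(n+2)$ and the $n(n+1)$ added coordinates together provide enough codimension to perform this decomposition and to realise \emph{all} the corresponding corrugations in parallel, in pairwise $e$-orthogonal $2$-planes that are moreover orthogonal to $\partial_1 v_q,\dots,\partial_n v_q$; a whole stage then behaves like a single corrugation of amplitude $\delta_q^{1/2}$ and frequency $\lambda_{q+1}$. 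Each step is the classical Nash corrugation $v\mapsto v+\lambda_{q+1}^{-1}(\Gamma_1\zeta_1+\Gamma_2\zeta_2)$, with $\Gamma_1,\Gamma_2$ periodic profiles of frequency $\lambda_{q+1}$ in the direction $\nu_i$ and amplitudes matched to $a_{q,i}$, and $\zeta_1,\zeta_2$ an orthonormal pair of reserved directions; it raises the pullback metric by $a_{q,i}^2\,\nu_i\otimes\nu_i$ up to an error of order $\lambda_{q+1}^{-1}\bigl(\|a_{q,i}\|_{C^1}+\|a_{q,i}\|_{C^0}\|v_q\|_{C^2}\bigr)$ plus lower order terms -- momentarily ignoring the behaviour near $\partial B_1$ -- and, since $a_{q,i}$ and the profiles vanish at $\partial B_1$, it respects the boundary constraint. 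Using $\|v_q\|_{C^1}\lesssim 1$ and $\|v_q\|_{C^2}\lesssim\delta_q^{1/2}\lambda_q$, the total stage error is $\lesssim\lambda_{q+1}^{-1}\delta_q\lambda_q$, so the choice $\lambda_{q+1}\sim\delta_q\lambda_q/\delta_{q+1}$ -- which for geometrically decaying $\delta_q$ telescopes to $\lambda_q\sim\delta_q^{-1}$ -- recovers (ii) at level $q+1$ with $\sigma_{q+1}=\sigma_q+C\delta_{q+1}$ (the reserve $c\,\delta_{q+1}he$ absorbing the part of the error not pointwise proportional to $h$), together with (iii).

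I expect the main obstacle to be the degeneration of the amplitudes at the boundary. Since $h$ vanishes linearly on $\partial B_1$ (the role of $h(1)=0$, $h'(1)\neq0$), the coefficients $a_{q,i}\sim\delta_q^{1/2}h^{1/2}$ behave like $\delta_q^{1/2}\,\mathrm{dist}(\cdot,\partial B_1)^{1/2}$: they are only $C^{1/2}$, not $C^1$, up to $\partial B_1$, so $\|a_{q,i}\|_{C^1}=\infty$ and $|\nabla a_{q,i}(x)|\sim\delta_q^{1/2}\,\mathrm{dist}(x,\partial B_1)^{-1/2}$ blows up, and through the term $\lambda_{q+1}^{-1}\nabla a_{q,i}\,\Gamma$ in $\nabla(v_{q+1}-v_q)$ it spoils the step error estimate and the $C^1$ bound of (iii) in a collar of $\partial B_1$. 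The remedy is to truncate each amplitude at the corrugation scale: replace $a_{q,i}$ by a $C^2$ function equal to it on $\{\mathrm{dist}(\cdot,\partial B_1)\ge\lambda_{q+1}^{-1}\}$, of size $O(\delta_q^{1/2}\lambda_{q+1}^{-1/2})$ and vanishing with all derivatives on $\partial B_1$. In the collar one then has $\|a_{q,i}\|_{C^1}\lesssim\delta_q^{1/2}\lambda_{q+1}^{1/2}$, hence $\lambda_{q+1}^{-1}\|a_{q,i}\|_{C^1}\lesssim\delta_q^{1/2}\lambda_{q+1}^{-1/2}$ is acceptable, while the metric left uncorrected there is $O(\delta_qh)=O(\delta_q\lambda_{q+1}^{-1})$ on a set of width $\lambda_{q+1}^{-1}$ -- so the lower pinching in (ii) holds only outside that shrinking collar, which is harmless since the residual tends to $0$; a parallel check gives $\|v_{q+1}\|_{C^2}\lesssim\delta_q^{1/2}\lambda_{q+1}$ in the collar as well, and the truncation keeps $v_{q+1}=(u,x_0)$, $\nabla v_{q+1}=(\nabla u\, 0)^\intercal$ \emph{exactly} on $\partial B_1$ (note also that $v_q^\sharp e=u^\sharp e=g$ there for every $q$, by \eqref{tmain:ass2} and $h(1)=0$, so the limit satisfies $v^\sharp e=g$ up to the boundary). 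The restriction $\alpha<\tfrac12$ is the usual Nash--Kuiper barrier, forced by the optimal scaling $\lambda_q\sim\delta_q^{-1}$ above, while the delicate compensation of the boundary degeneracy -- the blow-up $\mathrm{dist}^{-1/2}$ of $\nabla a_{q,i}$ absorbed by a single power of $\lambda_{q+1}^{-1}$ -- is what lets the construction meet the boundary condition; the overall picture is consistent with part (a). Finally, injectivity is preserved when $u$ is injective by the usual argument: each correction is $O(\delta_q^{1/2}\lambda_{q+1}^{-1})$ in $C^0$ and $v$ is a $C^{1,\alpha}$ immersion with controlled $C^{1,\alpha}$ norm, hence uniformly locally injective, so for $\delta_1$ small the injectivity of $v_0=(u,x_0)$ passes to $v$.
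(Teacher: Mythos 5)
The decisive gap is in your error accounting inside a stage. You treat the first--order term $2\lambda_{q+1}^{-1}a_k\,\mathrm{sym}\bigl(\nabla \bar v_q^{\intercal}\nabla\zeta_k\bigr)$ (your ``$\lambda_{q+1}^{-1}\|a\|_{0}\|v_q\|_{C^2}$'') as an error to be passed to the next stage, and you close the recursion with $\lambda_{q+1}\sim\delta_q\lambda_q/\delta_{q+1}$, telescoping to $\lambda_q\sim\delta_q^{-1}$, using the bound $\|v_q\|_{C^2}\lesssim\delta_q^{1/2}\lambda_q$. But your own induction (iii) only yields $\|v_q\|_{C^2}\lesssim\delta_{q-1}^{1/2}\lambda_q$, and this is unavoidable: the stage $q-1\to q$ corrugation has amplitude $\delta_{q-1}^{1/2}$ and frequency $\lambda_q$ (mollification cannot help, since the constraint $\ell^2[v_q]_2^2\lesssim\delta_{q+1}$ prevents any reduction of the effective second derivatives). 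The self--consistent recursion is therefore $\delta_{q+1}\lambda_{q+1}\gtrsim\delta_q^{1/2}\delta_{q-1}^{1/2}\lambda_q$; with geometric $\delta_q$ the factor $(\delta_{q-1}/\delta_q)^{1/2}$ compounds at every stage and forces $\lambda_q\gtrsim\delta_q^{-3/2}$, so that $\delta_q^{1/2}\lambda_{q+1}^{\alpha}$ is summable only for $\alpha<\frac13$, however you tune the ratio. This is exactly why the paper does \emph{not} treat these terms as errors: in \eqref{e:newinducedmetric} only the second line is an error, while the linear terms $M_i$ and the quadratic terms $\Lambda_{ij}$ are absorbed into the coefficients via the implicit--function--theorem decomposition of Proposition \ref{p:decomp} (K\"all\'en's device), leaving errors $E_1,E_2$ of size $\delta_{q+1}(\ell\lambda_{q+1})^{-2}$, quadratic in $\lambda_{q+1}^{-1}$; only then does \eqref{e:errorsize} close with $\lambda_q\approx\delta_q^{-bc}$, $bc\to1$, and give every $\alpha<\frac12$. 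As written, your scheme proves the statement at best for $\alpha<\frac13$.

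The same linear term also breaks your boundary bookkeeping (ii). Your amplitudes are $a\sim\delta_q^{1/2}h^{1/2}$, so the error you leave behind is proportional to $h^{1/2}$, whereas the reserve meant to absorb it, $c\,\delta_{q+1}he$, is proportional to $h$. At distance $d$ from $\partial B_1$ (so $h\sim d$) the requirement ``stage error $\le\sigma\delta_{q+1}h$'' reads $\delta_q^{1/2}d^{1/2}\|v_q\|_{C^2}/\lambda_{q+1}\le\sigma\delta_{q+1}d$, which fails for all $d\lesssim\bigl(\delta_q\lambda_q/(\sigma\delta_{q+1}\lambda_{q+1})\bigr)^2$; with your choice of $\lambda_{q+1}$ this is a collar of width of order one, not covered by your $\lambda_{q+1}^{-1}$--truncation, and the two--sided pinching needed to run the next decomposition cannot be restored. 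The paper avoids this in two ways you would have to import: the linear terms are cancelled exactly (note the factor $h_q^{-1/2}$ in \eqref{d:Mi}, which makes the corresponding contributions proportional to $h_q$), and the comparison profile is not $\delta_q h$ with the fixed $h$ but the sequence $h_q$ of \eqref{d:hq+1}, equal to a constant of size $\delta_{q+1}$ in the bulk and linear only on a collar of width $R\delta_{q+1}$, with the perturbation cut off at scale $\delta_{q+2}$ where $h_q\gtrsim\delta_{q+2}$, so the degenerate region never meets active corrugations. Finally, a smaller but real omission: performing all corrugations ``in parallel in reserved planes orthogonal to $\partial_j v_q$'' requires at every stage an orthonormal normal frame with derivative bounds uniform in $q$; fixed coordinate $2$--planes are no longer normal after the first stage, and this is what Proposition \ref{p:normals}, together with the auxiliary high--frequency map $w$ in the extra $n(n+1)$ coordinates (which keeps all $v_q$ in a fixed $C^1$--ball around $\tilde u$), is designed for — your proposal does not address it.
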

If we manage to construct $h$ and $u$ satisfying \eqref{tmain:ass1}-\eqref{tmain:ass2} and, in addition, violating the rigidity at the boundary then we are done since the derivatives of $v$ and $u$ agree at the boundary.\\
Fix $R>1$ and consider the scaled spherical cap $\bar \Sigma_R\subset \R^{3}$ given as the image of $\Phi: \bar B_1\to \R^{3}$, where $\Phi(x_1,x_2) = (x_1,x_2,\sqrt{R^{2}-x_1^{2}-x_2^{2}}-\sqrt{R^{2}-1})$. 
We use polar coordinates to define the map $u: \bar B_1\to \R^{8}$ by 
\begin{equation}\label{d:shortembedding} u(r,\theta) = (\varphi(r)\cos\theta,\varphi(r)\sin\theta, 0,\ldots,0)\,,\end{equation}
where $\varphi \in C^{\infty}([0,1])$ is a suitable reparametrization such that $\varphi(0)=0,\, \varphi(1)=1,\, \varphi'(1)=\frac{R}{\sqrt{R^{2}-1}}$, and, for every $r\in ]0,1[$,
\begin{align}
 &\frac{R^{2}}{R^{2}-r^{2}}-\varphi'(r)^{2}>0 \,, \label{e:rshortness} \\ 
 &  r^{2}-\varphi(r)^{2}>0 \label{e:thetashortness}\,.
\end{align}
Observe that, once we produce such a $\varphi$, the map $u$ is strictly short in $\mathring B_1$ (except maybe in the origin, where the polar coordinates are not suited to the problem) and isometric on the boundary. Indeed, the  metric induced by $u$ is given in polar coordinates by 
\[ u^{\sharp}e = \varphi'^{2} dr^{2} +\varphi^{2}d\theta^{2}\,,\]
whereas the metric on $\Sigma_R$ which is induced by the inclusion into $\R^{3}$ reads 
\[ g= \frac{R^{2}}{R^{2}-r^{2}}dr^{2}+r^{2}d\theta^{2}\,.\]
Hence, the shortness away from the origin is given by \eqref{e:rshortness} and \eqref{e:thetashortness} whereas the isometry on the boundary is apparent from the values $\varphi(1)$ and $\varphi'(1)$. In the following, we construct a piecewise smooth function $\tilde \varphi$ satisfying the above assumptions; smoothing out the corners will then provide $\varphi$.
 We abbreviate $\gamma:= \frac{R}{\sqrt{R^{2}-1}}$. Because $R>1$ we can fix a positive $\eta \in ]2-\gamma, 1[$.  Since $\eta + \gamma >2$ we can then find $\varepsilon>0$ small enough such that 
\begin{equation}\label{beta} 
0<1-\varepsilon(\eta +\gamma)+\frac{\varepsilon^{2}}{2}(1-\frac{1}{\gamma}+\gamma^{3}R^{-2}) \leq (1-2\varepsilon)\left (1-\left (\varepsilon R^{-1}\right )^{2}\right )^{-\sfrac{1}{2}}\,,\end{equation}
as one can see by expanding $(1+x^{2})^{-\sfrac{1}{2}}$ around $x=0$. Set 
\[ \beta:= \frac{1-\varepsilon(\eta +\gamma)+\frac{\varepsilon^{2}}{2}(1 -\gamma^{-1}+\gamma^{3}R^{-2})}{1-2\varepsilon}\,,\] 
and define the piecewise continous 
\[ \phi(r) = \begin{cases} 
 \eta \,, & \text{ for } r\in [0,\varepsilon[ \\ 
 \beta \,, & \text{ for } r\in [\varepsilon,1-\varepsilon[\\
 \gamma - (1-\gamma^{-1}+\gamma^{3}R^{-2})(1-r) \,, &\text{ for } r\in [1-\varepsilon,1]\,.
\end{cases}\]
The definition of $\beta$ ensures that 
\begin{align*}\int_0^{1}\phi(r)dr &= \eta \varepsilon+\beta(1-2\varepsilon)+\varepsilon(\gamma-(1-\gamma^{-1}+\gamma^{3}R^{-2}))+\frac{1}{2}(1-\gamma^{-1}+\gamma^{3}R^{-2})\varepsilon(2-\varepsilon)\\ 
& = \varepsilon(\eta+\gamma) + (1-2\varepsilon)\beta -\frac{1}{2}(1-\gamma^{-1}+\gamma^{3}R^{-2})\varepsilon^{2}  = 1\,.
\end{align*}
Consequently, setting $\tilde \varphi(r)  =\int_0^{r}\phi(s)ds$ yields a continuous, piecewise smooth function with $\tilde \varphi(1)=1$ and $\tilde \varphi'(1)= \gamma = \frac{R}{\sqrt{R^{2}-1}}$. We claim that $\tilde \varphi$ satisfies \eqref{e:rshortness} and \eqref{e:thetashortness}. Indeed, on $]0,\varepsilon[$ this is provided by the fact that $\eta <1$. Moreover, if $\varepsilon$ is small enough then $\beta< 1$ which, together with \eqref{beta}, shows the inequalites on $[\varepsilon,1-\varepsilon[$. If $\varepsilon$ is small enough, \eqref{e:rshortness} holds on $]1-\varepsilon,1]$  since
\[ 
\left.\frac{d}{dr}\right|_{r=1}\left (\frac{R^{2}}{R^{2}-r^{2}} -\tilde \varphi '(r)^{2}\right )=\frac{2R^{2}}{(R^{2}-1)^{2}}-2\phi(1)\phi'(1) = 2(\gamma^{4}R^{-2}-\gamma(1-\gamma^{-1}+\gamma^{3}R^{-2})) <0\,,
\] 
and 
\[ \frac{R^{2}}{R^{2}-1}-\tilde \varphi'(1)^{2} =0\,.\]
Finally, on $[1-\varepsilon, 1]$ we have 
\[
\tilde{\varphi}' \geq \gamma - \varepsilon (1-\gamma^{-1} + \gamma^3 R^{-2})\, .
\]
In particular, for $\varepsilon$ small enough we have $\tilde{\varphi}' > 1$ on $[1-\varepsilon, 1]$. Since $\tilde{\varphi} (1)=1$, the latter implies that $\tilde{\varphi} (r) < r$ on $[1-\varepsilon, 1[$, thus concluding the proof of 
\eqref{e:thetashortness}. 

Consequently, if $u$ is defined by \eqref{d:shortembedding} then it is isometric on $\partial B_1$ and strictly short in $\mathring{B_1}\setminus\{0\}$. To show that it is also strictly short in the origin we switch to euclidean coordinates and observe that $u(x_1,x_2) = (\eta x_1,\eta x_2,0)$ if $|x| < \varepsilon$. Hence 
\[ g- u^{\sharp } e = \left (1-\eta^{2}+\frac{x_1^{2}}{R^{2}-|x|^{2}}\right )dx_1^{2}+\left (1-\eta^{2}+\frac{x_2^{2}}{R^{2}-|x|^{2}}\right )dx_2^{2}+2\frac{x_1x_2}{R^{2}-|x|^{2}}dx_1dx_2\,.\]
The shortness around the origin then again follows from $\eta<1$. Lastly, we define 
\[ h(r) = 2(\gamma-1)(1-r) \,.\]
Obviously, \eqref{tmain:ass1} is satisfied and we claim that, sufficiently close to $\partial B_1$, also \eqref{tmain:ass2} holds. For this we again consider the terms in polar coordinates. Expanding around $r=1$ gives 
\[ 1- \left ( \frac{\varphi}{r}\right )^{2} = 2(\gamma-1)(1-r)+o(|1-r|)\,,\] 
and 
\begin{align*} \frac{R^{2}}{R^{2}-r^{2}} -\varphi'^{2} &= \gamma^{2}+2\gamma^{4}R^{-2}(r-1) -\gamma^{2} -2\gamma(1-\gamma^{-1}+\gamma^{3}R^{-2})(r-1) +o(|r-1|) \\
&=2\gamma(r-1)(\gamma^{3}R^{-2}-(1-\gamma^{-1}+\gamma^{3}R^{-2}))+o(|r-1|)\\&= 2(\gamma-1)(1-r)  +o(|r-1|)\,.\end{align*}
This shows that 
\[ g-u^{\sharp}e -he = \left (\frac{R^{2}}{R^{2}-r^{2}}- \varphi'^{2}-h\right )dr^{2}+ r^{2}\left (1- \left( \frac{\varphi}{r}\right )^{2}-h\right )d\theta^{2} = o(|r-1|) e\,,\]
hence \eqref{tmain:ass2} is satisfied. Now fix $\alpha < \frac{1}{2}$. Then Theorem \ref{t:main} can be applied to find and isometric immersion  $v=(\underline v,w)\in C^{1,\alpha}\left (\bar B_1,\R^{8+6}\right )$ such that on $\partial B_1$ $\nabla \underline v = \nabla u$, $w= 0$ and $\nabla w = 0$.  

We now consider the appropriate rescaling of the map $v$ by $R$, namely $\frac{v}{R}$, which induces an isometric embedding of $\Sigma_a$ for $a= \sqrt{1- R^{-2}}$. Since the map is an isometry, the vector $Y = v_* X$ has the same length as the vector $X$, namely $|X|=1$. Observe, moreover, that by construction such vector field is in fact parallel to the vector field $Z$ and it has positive scalar product with it. In particular we conclude that $\langle Y, Z\rangle =1$. 

\section{Towards a Proof of Theorem \ref{t:main}: Main Iteration} 
The proof of Theorem \ref{t:main} is based on an iteration scheme developed by J. Nash in \cite{Nash54} to prove his counterintuitive result about the existence of $C^1$ isometric embeddings of $n$ dimensional manifolds into Euclidean space with suprisingly low codimension $n+1$. We need to adapt the scheme in two ways. First of all, in its original state it only produces maps which are $C^1$. Later renditions are able to get to $C^{1,\sfrac{1}{5}}$ in the case of two dimensional disks (see \cite{DIS15} and \cite{CDS10} for more general results). However, as realised in \cite{Kallen}, more regular isometric embeddings can be produced at the expense of increasing the codimension. Secondly, the iteration process needs to keep the boundary values fixed. This can be achieved, as done in \cite{Wasem}, by multiplying the perturbations by cutoff functions which are suited to the iteration scheme (see Lemma \ref{l:cutoff}). The following proposition is the main building block of the iteration.
\begin{proposition}\label{p:stage}
Let $n\geq2$, $m\geq n(n+2)$, $\lambda>0$ and fix an embedding $\tilde u\in C^{\infty}(\bar B_1, \R^{m})$. There exist constants $\sigma_0\in\, ]0,\frac{1}{2}[$, $R(\lambda) \geq 1$, $\Lambda(R)\geq 1$ and $C_0(\tilde u, \Lambda)\geq 1$ such that the following holds. Fix $c >b>1$ and 
\[ a > a_0(b,c,\sigma_0,\tilde u,  \lambda, R,\Lambda, C_0) \,,\]
and define 
\[ \delta_q = a^{-b^{q}},\quad \lambda_q = a^{cb^{q+1}}\,.\] 
Assume $\tilde g\in C^{2}$ is a metric on $\bar B_1$ with 
\begin{equation}\label{pstage:w_q}
[\tilde g]_k \leq C_0(1+\delta_{1}^{1-k})\, \quad\quad \text{for } k=0,1,2\,,
\end{equation}
and suppose $v_q\in C^{\infty}(\bar B_1, \R^{m})$ and $h_q \in C^{\infty}(\bar B_1)$ are such that 
\begin{align} 
 &v_q = \tilde u \text{ on } \bar B_1 \setminus B_{1-R\delta_{q+1}}\,, \,\, \| v_q - \tilde u \|_{1} < C_0\sum_{k=1}^{q}\delta_{k}^{\sfrac{1}{2}}\,, \, [v_q]_2 \leq C_0\delta_q^{\sfrac{1}{2}}\lambda_q\,, \label{pstage:v_q}\\
 &h_q \text{ is linear on } \bar B_1 \setminus B_{1-R\delta_{q+1}} \text{ with } h_q(1)=0,\, h_q'(1)=-\lambda\nonumber\\
&\qquad\qquad\qquad\qquad\text{ and } \,\Lambda^{-1}\delta_{q+1} \leq h_q \leq \Lambda \delta_{q+1} \,\text{ on } \bar B_{1-R\delta_{q+1}}\,, \label{pstage:h_q1} \\ 
 & [h_q]_k \leq C_0 \delta_{q+1}^{1-k} \text{ for } k=0,1,2,3 \,, \text{ and }\label{pstage:h_q2}\\
 & (1-\sigma_0(1+\eta_q))h_q e \leq \tilde g- v_q^{\sharp}e \leq (1+\sigma_0(1+\eta_q))h_q e \, \text{ on } \bar B_1 \,,\label{pstage:shortness}  
\end{align}
where $\eta_q \in C^{\infty}_c(\bar B_1)$ is a radially symmetric cutoff function with  $\eta_q \equiv 0 $ on $\bar B_1\setminus B_{1-R\delta_{q+1}}$, $\eta_q \equiv 1$ on $\bar B_{1-(R+1)\delta_{q+1}}$ and taking values between $0$ and $1$ (cf. Lemma \ref{l:cutoff} for the definition of the cutoffs). We can then find $v_{q+1},h_{q+1},\eta_{q+1}$ satisfying \eqref{pstage:v_q}--\eqref{pstage:shortness} with $q$ replaced by $q+1$ and, in addition, the following estimates hold:
\begin{align} 
 &\|v_{q+1}-v_q \|_0 \leq C_0\frac{\delta_{q+1}^{\sfrac{1}{2}}}{\lambda_{q+1}}\,,\label{pstage:c0}\\
 &[v_{q+1}-v_q]_1 \leq C_0 \delta_{q+1}^{\sfrac{1}{2}} \,.
 \label{pstage:c1}
\end{align}
\end{proposition}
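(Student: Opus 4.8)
The scheme is one ``stage'' of a Nash--Kuiper iteration in which, exploiting the codimension bound $m-n\geq n(n+1)$, the $\tfrac{n(n+1)}{2}$ primitive metrics needed to eat up the deficit are added \emph{simultaneously}, and in which every perturbation is multiplied by a cutoff adapted to the boundary collar as in \cite{Wasem} and Lemma~\ref{l:cutoff}. \emph{Step 1 (new parameters and mollification).} One first fixes the new deficit $h_{q+1}$: a radial function equal to a constant $\sim\delta_{q+2}$ on $\bar B_{1-R\delta_{q+2}}$ and equal to $r\mapsto-\lambda(r-1)$ on the thin collar $\bar B_1\setminus B_{1-R\delta_{q+2}}$, so that \eqref{pstage:h_q1}--\eqref{pstage:h_q2} hold with $q+1$ once $\Lambda=\Lambda(R)$ is chosen so that $\lambda R\delta_{q+2}\in[\Lambda^{-1}\delta_{q+2},\Lambda\delta_{q+2}]$; note that $h_{q+1}$ and $h_q$ coincide on the common collar $\bar B_1\setminus B_{1-R\delta_{q+2}}$. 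One also picks the new cutoff $\eta_{q+1}$ as in Lemma~\ref{l:cutoff}, supported in $B_{1-R\delta_{q+2}}$ and $\equiv1$ on $\bar B_{1-(R+1)\delta_{q+2}}$, and mollifies $v_q$ at a scale $\ell$ as large as the quadratic estimate permits (i.e.\ with $\ell^2[v_q]_2^2\ll\delta_{q+1}$, in particular $\ell\ll\delta_{q+2}$), leaving $v_q=\tilde u$ untouched where it is already smooth; call the result $\bar v$. By Lemma~\ref{l:CET} in the form $\|\bar v^\sharp e-v_q^\sharp e\|_{C^1}\lesssim\ell^2[v_q]_2^2$, together with \eqref{pstage:v_q} and \eqref{pstage:shortness}, one obtains $\|\bar v-v_q\|_1\ll\delta_{q+1}^{\sfrac{1}{2}}$, the bounds $[\bar v]_k\lesssim[v_q]_2\,\ell^{2-k}$ for $k\geq2$, and that $\tilde g-\bar v^\sharp e$ coincides with $\tilde g-v_q^\sharp e\approx h_q e$ up to an error $\ll\delta_{q+1}e$.

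\emph{Step 2 (decomposition and simultaneous Nash twist).} One writes on $\bar B_1$
\[
\tilde g-\bar v^\sharp e=h_{q+1}\,e+\sum_{j=1}^{n(n+1)/2}a_j^2\,\nu_j\otimes\nu_j\,,
\]
with $\nu_j\in\S^{n-1}$ a fixed finite family of directions as in Nash's decomposition lemma and $a_j\geq0$ smooth: dividing the left-hand side minus $h_{q+1}e$ by $h_q$, the error coming from \eqref{pstage:shortness} contributes only $O(\sigma_0)$ (with $\sigma_0$ fixed small from the start, hence no circularity with $R,\Lambda$), so on $\mathrm{supp}\,\eta_{q+1}$ that tensor lies in the cone where the decomposition lemma applies and is comparable to $\delta_{q+1}\,\mathrm{Id}$, while on the outer collar it degenerates smoothly to $0$ — there both $\tilde g-\bar v^\sharp e$ and $h_{q+1}e$ equal $-\lambda(r-1)e$ up to lower order — so the $a_j$ vanish with all derivatives outside $B_{1-R\delta_{q+2}}$; one gets $a_j\sim\delta_{q+1}^{\sfrac{1}{2}}$ and $[a_j]_k\lesssim\delta_{q+1}^{\sfrac{1}{2}}\ell^{-k}$. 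Since $m-n\geq n(n+1)$, $\bar v$ carries $n(n+1)$ orthonormal normal fields $\zeta_{j,1},\zeta_{j,2}$ of class $C^1$ with $[\zeta_{j,\cdot}]_1\lesssim[\bar v]_2$, and one sets
\[
v_{q+1}=\bar v+\frac{1}{\lambda_{q+1}}\sum_{j}a_j\bigl(\cos(\lambda_{q+1}\,x\cdot\nu_j)\,\zeta_{j,1}+\sin(\lambda_{q+1}\,x\cdot\nu_j)\,\zeta_{j,2}\bigr)
\]
on $B_{1-R\delta_{q+2}}$, and $v_{q+1}=\tilde u$ outside (the two formulas match to infinite order, so $v_{q+1}\in C^\infty$, $v_{q+1}=\tilde u$ and $\nabla v_{q+1}=(\nabla u\, 0)^{\intercal}$ on $\partial B_1$). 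A direct computation gives $v_{q+1}^\sharp e=\bar v^\sharp e+\sum_j a_j^2\,\nu_j\otimes\nu_j+E$, where the cross terms with $\bar v$ and the interactions between twists with different indices drop at leading order because the $\zeta_{j,\cdot}$ are orthonormal and normal to $\bar v$, and the remainder obeys the standard estimate $\|E\|_{C^0}\lesssim\lambda_{q+1}^{-1}\bigl([a_j]_1+a_j[\bar v]_2\bigr)+\lambda_{q+1}^{-2}\bigl([a_j]_1+a_j[\bar v]_2\bigr)^2$; by the choice of $\ell$ and of the parameters $b,c,\lambda$, and by taking $a\geq a_0$ large enough that all the competing powers of $a$ are beaten, this is $\ll\delta_{q+2}$. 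Hence $\tilde g-v_{q+1}^\sharp e=h_{q+1}e-E$ where $\eta_{q+1}\equiv1$, and $=h_{q+1}e+(1-\eta_{q+1}^2)\sum_j a_j^2\nu_j\otimes\nu_j-E$ on the collar, from which \eqref{pstage:shortness} with $q+1$ follows: in the bulk because $\|E\|_{C^0}\ll\sigma_0 h_{q+1}$; on the collar because the extra nonnegative term $(1-\eta_{q+1}^2)\sum_j a_j^2\nu_j\otimes\nu_j$ is absorbed into the wider band $\sigma_0(1+\eta_{q+1})h_{q+1}e$, which is exactly where one has to match the rates at which $\eta_{q+1}$ and the $a_j$ are switched off.

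\emph{Step 3 (closing the estimates).} The twist has amplitude $\sim\delta_{q+1}^{\sfrac{1}{2}}$ and frequency $\lambda_{q+1}$, so $\|v_{q+1}-\bar v\|_0\lesssim\delta_{q+1}^{\sfrac{1}{2}}\lambda_{q+1}^{-1}$, $[v_{q+1}-\bar v]_1\lesssim\delta_{q+1}^{\sfrac{1}{2}}$ and $[v_{q+1}]_2\lesssim[\bar v]_2+\delta_{q+1}^{\sfrac{1}{2}}\lambda_{q+1}\lesssim\delta_{q+1}^{\sfrac{1}{2}}\lambda_{q+1}$. Combined with $\|\bar v-v_q\|_1\ll\delta_{q+1}^{\sfrac{1}{2}}$ this yields \eqref{pstage:c0}, \eqref{pstage:c1} and the $C^2$ bound of \eqref{pstage:v_q} with $q+1$, while $\|v_{q+1}-\tilde u\|_1<C_0\sum_{k=1}^{q+1}\delta_k^{\sfrac{1}{2}}$ and $v_{q+1}=\tilde u$ outside $B_{1-R\delta_{q+2}}$ are then immediate, and $h_{q+1},\eta_{q+1}$ satisfy the remaining requirements by construction. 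All implicit constants depend only on $\tilde u,\lambda,R,\Lambda,n,m$, hence are absorbed into $C_0=C_0(\tilde u,\Lambda)$, and every leftover smallness requirement is arranged by choosing $a>a_0(b,c,\sigma_0,\tilde u,\lambda,R,\Lambda,C_0)$; if $\tilde u$ is an embedding then $\bar v$ and $v_{q+1}$ are injective, since the correction is small in $C^0$ and supported away from $\partial B_1$.

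\emph{The main obstacle.} The delicate point — and the reason for the $\eta_q$-dependent band in \eqref{pstage:shortness} and for the tailor-made cutoffs of Lemma~\ref{l:cutoff} — is the behaviour on the collar $\bar B_1\setminus B_{1-R\delta_{q+2}}$, whose width is only $\sim\delta_{q+2}$: cutting the perturbations off there forces derivative losses of order $\delta_{q+2}^{-1}$, comparable to the inverse of the deficit one is controlling, so $\eta_{q+1}$ and the amplitudes $a_j$ must be switched off at precisely the right relative rate for the identity of Step~2 to degrade \emph{into} the wider band rather than out of it; one must also check that the decomposed tensor $\tilde g-\bar v^\sharp e-h_{q+1}e$ stays in the admissible cone all the way across that annulus, and that the mollification near $\partial B_1$ is compatible with the exact prescribed boundary values. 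Keeping all of this interlocked with the quantitative dependence of $R(\lambda),\Lambda(R),C_0(\tilde u,\Lambda),a_0$ is where the bulk of the work — and the genuine interaction of the boundary condition with the Nash--Kuiper machinery — lies.
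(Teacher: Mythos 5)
There is a genuine gap, and it sits at the single point where this scheme differs from the classical Nash--Kuiper iteration. In Step 2 you use the plain decomposition $\tilde g-\bar v^\sharp e=h_{q+1}e+\sum_j a_j^2\nu_j\otimes\nu_j$ and relegate \emph{all} interaction terms to an error $E$ which you claim is $\ll\delta_{q+2}$. But the dominant such term is the one linear in the amplitude, $2\lambda_{q+1}^{-1}\sum_j a_j\,\mathrm{sym}\bigl(\nabla\bar v^\intercal\nabla\zeta_{j,\cdot}\bigr)$ (it does \emph{not} cancel by normality --- only the term involving $\zeta_{j,\cdot}$ itself does, not the one involving $\nabla\zeta_{j,\cdot}$), and its size is $\delta_{q+1}^{\sfrac12}\cdot\delta_{q+1}^{\sfrac12}\ell^{-1}\lambda_{q+1}^{-1}=\delta_{q+1}\ell^{-1}\lambda_{q+1}^{-1}$. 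Writing everything as powers of $a$, the requirement $\delta_{q+1}\ell^{-1}\lambda_{q+1}^{-1}\leq\delta_{q+2}$ reduces to $b^2(1-c)+b(c-\tfrac12)-\tfrac12\leq0$, which \emph{fails} for $b,c$ close to $1$ --- precisely the regime one must reach since the final H\"older exponent is $\alpha<\frac{1}{2bc}$. No choice of $a_0$ rescues a wrong sign in the exponent. This is why the paper does not use the plain decomposition: following K\"all\'en, it decomposes $\tau=(\tilde g-\bar v_q^\sharp e-\delta_{q+2}e)/h_q$ as $\sum_i c_i^2\nu_i\otimes\nu_i+\sum_i c_iM_i+\sum_{i,j}c_ic_j\Lambda_{ij}$ (Proposition \ref{p:decomp}, via the implicit function theorem), where the $M_i$ and $\Lambda_{ij}$ are exactly the linear and quadratic interaction terms generated by the twist itself; these are thereby \emph{absorbed into the coefficients} rather than estimated, and only the genuinely harmless remainder $E_1\sim\delta_{q+1}\ell^{-2}\lambda_{q+1}^{-2}\leq\delta_{q+2}$ (inequality \eqref{e:errorsize}, valid for all $c>b>1$) is treated as an error. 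This is also the real reason for the codimension hypothesis $m\geq n(n+2)$: one needs $2n_*=n(n+1)$ normal fields to run the simultaneous twist that makes the implicit decomposition solvable.

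Two secondary points. First, your amplitudes $a_j$ are square roots of coefficients that ``degenerate smoothly to $0$'' on the collar; square roots of nonnegative smooth functions are not smooth in general, so you cannot conclude $a_j\in C^\infty$ or that they vanish to infinite order. The paper avoids this by taking $a_k=\eta_{q+1}h_q^{\sfrac12}c_k$ with $c_k>r_0$ bounded \emph{below} on all of $\bar B_{1-R\delta_{q+2}}$, so that the vanishing comes entirely from the explicit cutoff $\eta_{q+1}$ and the factor $h_q^{\sfrac12}$. Second, and relatedly, the telescoping of the error band on the collar is not automatic from ``absorbing a nonnegative term into the wider band'': the paper \emph{defines} $h_{q+1}$ by the rational expression \eqref{d:hq+1} in $\eta$, $h_q$ and $\delta_{q+2}$ precisely so that $(1-\eta^2)(1\pm\sigma_0)h_q+\eta^2\delta_{q+2}$ matches $(1\pm\sigma_0(1+\eta))h_{q+1}$ up to a term of a definite sign, and then checks separately the regimes $\eta\leq\varepsilon$ (where \eqref{e:eta'smallness} controls $|\nabla\eta|^2$ by $\eta$) and $\eta\geq\varepsilon$. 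Your a priori choice of $h_{q+1}$ would have to be replaced by, or shown equivalent to, such a construction for the sign argument to close.
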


\section{Proof of Proposition \ref{p:stage}: Preliminaries}
\subsection{H\"older spaces} In the following $m\in \N$, $\alpha\in ]0,1[$. The maps $f$ can be real-valued, vector-valued, matrix-valued or generally tensor-valued. In all these cases we endow the targets with the standard Euclidean norms, for which we will use the notation $|f (x)|$. 
We introduce the usual H\"older norms as follows.
First of all, the supremum norm is denoted by $\|f\|_0:=\sup |f|$.  We define the H\"older seminorms 
as
\begin{equation*}
\begin{split}
[f]_{k}&=\max_{|\beta|=m}\|D^{\beta}f\|_0\, ,\\
[f]_{k+\alpha} &= \max_{|\beta|=m}\sup_{x\neq y}\frac{|D^{\beta}f(x)-D^{\beta}f(y)|}{|x-y|^{\alpha}}\, .
\end{split}
\end{equation*}
The H\"older norms are then given by
\begin{eqnarray*}
\|f\|_{k}&=&\sum_{j=0}^{k}[f]_j\, ,\\
\|f\|_{k+\alpha}&=&\|f\|_k+[f]_{k+\alpha}.
\end{eqnarray*}
We then recall the standard ``Leibniz rule'' to estimate norms of products
\begin{equation}\label{e:Holderproduct}
[fg]_{r}\leq C\bigl([f]_r\|g\|_0+\|f\|_0[g]_r\bigr) \qquad \mbox{for any $1\geq r\geq 0$}
\end{equation}
and the usual interpolation inequalities
\begin{equation}\label{e:Holderinterpolation2}
[f]_{s}\leq C\|f\|_0^{1-\frac{s}{r}}[f]_{r}^{\frac{s}{r}} \qquad \mbox{for all $r\geq s\geq 0$}.
\end{equation}

We also collect two classical estimates on the H\"older norms of compositions. These are also standard, for instance
in applications of the Nash-Moser iteration technique. A proof can be found in \cite{DIS15}.

\begin{proposition}\label{p:chain}
Let  $\Psi: \Omega \to \mathbb R$ and $u: \R^n\supset U \to \Omega$ be two $C^{k}$ functions, with $\Omega\subset \R^N$. 
Then there is a constant $C$ (depending only on $k$,
$\Omega$ and $U$) such that
\begin{align}
\left[\Psi\circ u\right]_{k}&\leq C[u]_{k}\left( [\Psi]_1+\|u\|_0^{k-1}[\Psi]_k\right)\label{e:chain0}\, ,\\
\left[\Psi\circ u\right]_{k} &\leq C\left([u]_{k}[\Psi]_1+[u]_1^{k}[\Psi]_{k}\right)\label{e:chain1}\, .
\end{align} 
Let $f, g: \R^n\supset U \to \R$ two $C^{k}$ functions. Then there is a constant $C$ (depending only on $\alpha$, $k$, $n$ and $U$) such that
\begin{equation}\label{e:product1}
[fg]_{k} \leq C(\|f\|_0 [g]_{k} + \|g\|_0 [f]_{k})\, .
\end{equation}
\end{proposition}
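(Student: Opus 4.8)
The plan is to derive all three inequalities from the higher‑order Leibniz and chain rules, reducing each of the finitely many resulting terms to one application of the interpolation inequality \eqref{e:Holderinterpolation2} together with Young's inequality $A^{\theta}B^{1-\theta}\le\theta A+(1-\theta)B$. For $k=1$ all three bounds follow directly from the ordinary product and chain rules (and the interpolation route would degenerate), so assume $k\ge2$. For \eqref{e:product1} I would expand $D^{\beta}(fg)=\sum_{\gamma\le\beta}\binom{\beta}{\gamma}D^{\gamma}f\,D^{\beta-\gamma}g$ with $|\beta|=k$, so that each summand is bounded in $C^{0}$ by $[f]_{j}[g]_{k-j}$, $j=|\gamma|$, with the convention $[\,\cdot\,]_{0}=\|\cdot\|_{0}$; by \eqref{e:Holderinterpolation2}, $[f]_{j}\le C\|f\|_{0}^{1-j/k}[f]_{k}^{j/k}$ and $[g]_{k-j}\le C\|g\|_{0}^{j/k}[g]_{k}^{1-j/k}$, hence $[f]_{j}[g]_{k-j}\le C(\|f\|_{0}[g]_{k})^{1-j/k}([f]_{k}\|g\|_{0})^{j/k}\le C(\|f\|_{0}[g]_{k}+[f]_{k}\|g\|_{0})$, and summing the finitely many terms gives \eqref{e:product1}.

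For the composition estimates I would start from the multivariate Fa\`a di Bruno formula, which writes $D^{\beta}(\Psi\circ u)$, for $|\beta|=k$, as a finite sum — with combinatorial coefficients depending only on $k$ and $N$ — of terms of the form $(\partial^{\ell}\Psi)(u)\prod_{i=1}^{\ell}D^{\beta_{i}}u_{j_{i}}$ with $1\le\ell\le k$, $|\beta_{i}|\ge1$ and $\sum_{i}|\beta_{i}|=k$; since $u(U)\subset\Omega$, each such term is bounded in $C^{0}$ by $C[\Psi]_{\ell}\prod_{i}[u]_{|\beta_{i}|}$. For \eqref{e:chain0} I would interpolate $[u]_{|\beta_{i}|}\le C\|u\|_{0}^{1-|\beta_{i}|/k}[u]_{k}^{|\beta_{i}|/k}$, so that $\prod_{i}[u]_{|\beta_{i}|}\le C\|u\|_{0}^{\ell-1}[u]_{k}$; applying \eqref{e:Holderinterpolation2} to $D\Psi$ gives $[\Psi]_{\ell}\le C[\Psi]_{1}^{1-\mu}[\Psi]_{k}^{\mu}$ with $\mu=(\ell-1)/(k-1)$, and since $\|u\|_{0}^{\ell-1}=(\|u\|_{0}^{k-1})^{\mu}$, Young's inequality bounds $[\Psi]_{\ell}\|u\|_{0}^{\ell-1}$ by $C([\Psi]_{1}+\|u\|_{0}^{k-1}[\Psi]_{k})$; multiplying by $[u]_{k}$ and summing yields \eqref{e:chain0}. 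For \eqref{e:chain1} I would instead interpolate each $[u]_{|\beta_{i}|}$ between $[u]_{1}$ and $[u]_{k}$ — apply \eqref{e:Holderinterpolation2} to $Du$ to get $[u]_{|\beta_{i}|}\le C[u]_{1}^{1-\lambda_{i}}[u]_{k}^{\lambda_{i}}$ with $\lambda_{i}=(|\beta_{i}|-1)/(k-1)$, whence $\sum_{i}\lambda_{i}=(k-\ell)/(k-1)=1-\mu$ — and combine this with $[\Psi]_{\ell}\le C[\Psi]_{1}^{1-\mu}[\Psi]_{k}^{\mu}$; the resulting bound $C[\Psi]_{1}^{1-\mu}[\Psi]_{k}^{\mu}[u]_{1}^{\ell-1+\mu}[u]_{k}^{1-\mu}$ is, because $\ell-1+\mu=k\mu$, exactly $C(\,[\Psi]_{1}[u]_{k}\,)^{1-\mu}(\,[\Psi]_{k}[u]_{1}^{k}\,)^{\mu}\le C([\Psi]_{1}[u]_{k}+[\Psi]_{k}[u]_{1}^{k})$, and summing proves \eqref{e:chain1}. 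The extreme cases $\ell=1$ and $\ell=k$ produce the two right‑hand terms of \eqref{e:chain1} directly, with no interpolation needed.

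The computations above are all elementary; the one place that requires genuine care is the bookkeeping in the Fa\`a di Bruno expansion, namely that $D^{\beta}(\Psi\circ u)$ really decomposes into finitely many terms of the stated shape with the number of terms controlled by $k$ and $N$ alone, and then the verification that the interpolation exponents conspire so that every term collapses onto the two claimed monomials. Once this structural decomposition is in place, each term is closed by a single use of Young's inequality — which is why one usually just refers to \cite{DIS15}.
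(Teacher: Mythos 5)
Your argument is correct and is essentially the standard proof of these estimates: the paper itself gives no proof of Proposition \ref{p:chain} but defers to \cite{DIS15}, where the same strategy --- Leibniz/Fa\`a di Bruno expansion, interpolation of each intermediate seminorm against the endpoint seminorms via \eqref{e:Holderinterpolation2}, and Young's inequality --- is carried out. The exponent bookkeeping you describe (in particular $\sum_i\lambda_i=1-\mu$ and $\ell-1+\mu=k\mu$) checks out, so every term collapses onto the claimed monomials as stated.
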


\subsection{Quadratic mollification estimate} 
We will often use regularizations of maps $f$ by convolution with a standard mollifier $\varphi_\ell (y) := \ell^{-n} \varphi (\frac{y}{\ell})$, where $\varphi\in C^\infty_c (B_1)$ is assumed to have integral $1$ and to be non negative and rotationally symmetric. We will need the following estimates. For a proof see \cite{CDS10}.

\begin{lemma}\label{l:mollify}
For any $r,s\geq 0$ and $0<\alpha\leq 1$ we have
\begin{align}
&[f*\varphi_\ell]_{r+s}\leq C\ell^{-s}[f]_r,\label{e:mollify1}\\
&[f-f*\varphi_\ell]_r \leq C\ell^{2}[f]_{2+r},\label{e:mollify2}\\
&\|f-f*\varphi_\ell\|_r \leq C\ell^{2-r}[f]_{2}, \qquad \mbox{if $0\leq r\leq2$} \label{e:mollify4}\\
&\|(fg)*\varphi_\ell-(f*\varphi_\ell)(g*\varphi_\ell)\|_r\leq C\ell^{2\alpha -r}\|f\|_\alpha\|g\|_\alpha\, ,\label{e:mollify3}
\end{align}
where the constants $C$ depend only upon $s$, $r$, $\alpha$ and $\varphi$. 
\end{lemma}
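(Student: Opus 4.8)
The plan is to derive all four inequalities from three elementary facts about convolution with $\varphi_\ell$: (i) $D^\gamma(f*\varphi_\ell)=(D^{\gamma_1}f)*(D^{\gamma_2}\varphi_\ell)$ for any splitting $\gamma=\gamma_1+\gamma_2$; (ii) $\|D^{\gamma_2}\varphi_\ell\|_{L^1}\le C\ell^{-|\gamma_2|}$ and, more generally, $\int|y|^a\,|D^{\gamma_2}\varphi_\ell(y)|\,dy\le C\ell^{a-|\gamma_2|}$; and (iii) the cancellations $\int D^{\gamma_2}\varphi_\ell=0$ for $|\gamma_2|\ge 1$ and $\int y\,\varphi_\ell(y)\,dy=0$ coming from the rotational symmetry of $\varphi$. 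Once each estimate is established at integer output order, the fractional orders follow either from the dichotomy $|x-z|\gtrless\ell$ (bounding the increment of $D^\gamma(\cdot)$ by twice its sup norm when $|x-z|\ge\ell$, and by $|x-z|$ times the sup norm of one more derivative when $|x-z|<\ell$) or, where it is more convenient, from the interpolation inequality \eqref{e:Holderinterpolation2}. Throughout we may assume $\ell\le 1$.

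For \eqref{e:mollify1}, write $r=k+\beta$ with $k=\lfloor r\rfloor$, $\beta\in[0,1)$, and distribute the derivatives in (i) so that exactly $k$ of them act on $f$. When at least one derivative lands on $\varphi_\ell$ we use (iii) to replace $D^kf(x-y)$ by $D^kf(x-y)-D^kf(x)$, so that the Hölder seminorm $[f]_r=[D^kf]_\beta$ (respectively $\|D^kf\|_0=[f]_k$ when $\beta=0$) can be used together with (ii) to produce the factor $\ell^{-s}$; the fractional part of $r+s$ is then absorbed by the $|x-z|\gtrless\ell$ dichotomy. For \eqref{e:mollify2}, set $u:=D^{\lfloor r\rfloor}f$, so $D^{\lfloor r\rfloor}(f-f*\varphi_\ell)=u-u*\varphi_\ell$; by the symmetry in (iii), $u(x)-u*\varphi_\ell(x)=-\int\bigl(u(x-y)-u(x)+\nabla u(x)\!\cdot\!y\bigr)\varphi_\ell(y)\,dy$ and the Taylor remainder is $\int_0^1(1-t)\,D^2u(x-ty)[y,y]\,dt$. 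Estimating the corresponding increment by $[D^2u]_\beta|x-z|^\beta$ (by $\|D^2u\|_0$ when $\beta=0$) and using $|y|\le\ell$ on the support of $\varphi_\ell$ gives the gain $\ell^{2}$ and the bound $C\ell^2[f]_{2+r}$.

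Inequality \eqref{e:mollify4} I would prove first at the three values $r=0,1,2$: for $r=0$ it is the Taylor argument above with $u=f$; for $r=1$ one uses only first-order information, $|\nabla f(x-y)-\nabla f(x)|\le|y|\,[f]_2\le \ell\,[f]_2$; and for $r=2$ it is the triangle inequality $\|D^2f-(D^2f)*\varphi_\ell\|_0\le 2[f]_2$. The intermediate values $r\in(0,1)\cup(1,2)$ then follow by interpolating these three estimates via \eqref{e:Holderinterpolation2}, and summing the resulting seminorm bounds yields $\|f-f*\varphi_\ell\|_r\le C\ell^{2-r}[f]_2$.

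The main point is the commutator estimate \eqref{e:mollify3}, which I would prove via the Constantin--E--Titi identity: for $C_\ell(f,g):=(fg)*\varphi_\ell-(f*\varphi_\ell)(g*\varphi_\ell)$ one has
\[
C_\ell(f,g)(x)=\tfrac12\iint\bigl(f(x-y)-f(x-z)\bigr)\bigl(g(x-y)-g(x-z)\bigr)\,\varphi_\ell(y)\,\varphi_\ell(z)\,dy\,dz\,,
\]
obtained by writing $(fg)*\varphi_\ell$ as a double integral against $\varphi_\ell(y)\varphi_\ell(z)$, symmetrizing, and subtracting $(f*\varphi_\ell)(g*\varphi_\ell)$. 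For $r=0$ this gives the bound at once, since $|f(x-y)-f(x-z)|\le[f]_\alpha|y-z|^\alpha\le [f]_\alpha(2\ell)^\alpha$ and likewise for $g$, so $\|C_\ell(f,g)\|_0\le C\ell^{2\alpha}[f]_\alpha[g]_\alpha$. For integer $r=N\ge1$ one changes variables $u=x-y$, $v=x-z$ so that the $x$-dependence sits only in $\varphi_\ell(x-u)\varphi_\ell(x-v)$, applies the Leibniz rule in $x$, and in each resulting term exploits (iii): whenever a factor $D^{\gamma_i}\varphi_\ell$ with $|\gamma_i|\ge1$ occurs one subtracts from the double increment $(f(u)-f(v))(g(u)-g(v))$ its values at $u=x$ and/or at $v=x$, and a short computation shows the ``fully subtracted'' increment is a sum of products $(f(u)-f(x))(g(v)-g(x))$ and $(f(v)-f(x))(g(u)-g(x))$, hence still of size $C\ell^{2\alpha}[f]_\alpha[g]_\alpha$, while the derivatives on the mollifiers contribute $\ell^{-N}$ via (ii). This yields $[C_\ell(f,g)]_N\le C\ell^{2\alpha-N}[f]_\alpha[g]_\alpha$ and the $C^N$-norm bound; fractional $r$ then follows from \eqref{e:Holderinterpolation2}. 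The hard part is exactly the bookkeeping in this last step---identifying which of the Leibniz terms carries a mean-zero mollifier factor and subtracting precisely the right combination of ``boundary'' values so that \emph{every} term is genuinely of order $\ell^{2\alpha}$. This commutator is also the estimate that ultimately produces the gain $\delta_{q+1}^{\sfrac{1}{2}}$ in Proposition \ref{p:stage} and hence the threshold $\tfrac12$.
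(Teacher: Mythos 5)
Your argument is correct, and it is essentially the proof of the source the paper itself relies on: Lemma \ref{l:mollify} is not proved here but quoted from \cite{CDS10}, where \eqref{e:mollify1}, \eqref{e:mollify2} and \eqref{e:mollify4} are obtained exactly by your elementary mollifier facts (moment bounds, mean-zero cancellations, Taylor expansion, interpolation), and \eqref{e:mollify3} by the Constantin--E--Titi commutator mechanism, there written with a single convolution as $(fg)*\varphi_\ell-(f*\varphi_\ell)(g*\varphi_\ell)=\int\bigl(f(x-y)-f(x)\bigr)\bigl(g(x-y)-g(x)\bigr)\varphi_\ell(y)\,dy-(f-f*\varphi_\ell)(g-g*\varphi_\ell)$, of which your symmetrized double-integral identity is an equivalent variant. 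One remark: the step you single out as ``the hard part'' is actually unnecessary. After the change of variables the $x$-derivatives fall only on $\varphi_\ell(x-u)\varphi_\ell(x-v)$, and on the support of this product $|u-v|\le 2\ell$, so the untouched double increment $\bigl(f(u)-f(v)\bigr)\bigl(g(u)-g(v)\bigr)$ is already of size $C\ell^{2\alpha}[f]_\alpha[g]_\alpha$; combined with $\int|D^{\gamma}\varphi_\ell|\le C\ell^{-|\gamma|}$ this gives $\bigl[(fg)*\varphi_\ell-(f*\varphi_\ell)(g*\varphi_\ell)\bigr]_N\le C\ell^{2\alpha-N}[f]_\alpha[g]_\alpha$ with no cancellation at all --- the mean-zero property of $D^\gamma\varphi_\ell$ is only needed for \eqref{e:mollify1} and \eqref{e:mollify2}. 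Your subtraction scheme is legitimate (each subtracted term integrates to zero precisely when the corresponding mollifier factor carries a derivative, and the algebra you describe does close), so nothing is wrong; it is simply superfluous bookkeeping.
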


\subsection{Existence of normals}
The following proposition claims the existence of an orthonormal family of normal vectorfields to the embedded surface together with the appropriate estimates \eqref{pnormals3}. It is already contained in \cite{Kallen}, but our condition on the co-dimension is less restrictive ($d\geq 1$ as opposed to $d\geq n+1$). The reason for this is that in the proof we use Lemma \ref{l: normals} below instead of Lemma 2.5 of \cite{Kallen}. The rest of the proof is essentially unchanged. For the readers convenience we provide the details in the appendix. 

\begin{proposition}\label{p:normals} Let $n\geq 2$, $d\geq 1$, $B$ a set diffeomorphic to the closed unit ball of $\R^{n}$ and $u\in C^{\infty}\left (B,\R^{n+d}\right )$ an immersion. There exists $\rho_0 \equiv \rho_0(d,n,u) >0$ and constants $C_k$ depending only on $u$ such that the following holds. If $v\in C^{\infty}\left (B,\R^{n+d}\right )$ is such that 
\[ \| v-u \|_{C^{1}}  < \rho_0\,,\]
then there exist $\zeta_1(v),\ldots, \zeta_d(v) \in C^{\infty}
\left (B,\R^{n+d}\right )$ such that for all $1\leq i,j\leq d$ we have
\begin{align}
\langle \zeta_i(v),\zeta_j(v)\rangle& = \delta_{ij} \quad \text{ on } B\label{pnormals1}\\
\nabla v \cdot \zeta_i(v) &= 0 \quad \quad \text{ on } B\label{pnormals2}
\end{align}
and 
\begin{equation} \label{pnormals3}
[\zeta_i(v)]_{k} \leq C_k(1+\|v\|_{k+1})\,.
\end{equation}
\end{proposition}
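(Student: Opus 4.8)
\textbf{Plan for the proof of Proposition \ref{p:normals}.}
The statement is local in nature in the following sense: it asserts the existence of a global orthonormal frame of the normal bundle, but since $B$ is diffeomorphic to a ball the normal bundle of the immersion $u$ is trivial, so a smooth orthonormal normal frame $\zeta_1^0, \ldots, \zeta_d^0$ for $u$ itself exists; the task is then to perturb it to a normal frame for the nearby immersion $v$ while keeping quantitative control of the H\"older norms in terms of $\|v\|_{k+1}$. The plan is to reduce everything to a fixed linear-algebra construction applied pointwise, following \cite{Kallen} but substituting Lemma \ref{l: normals} (from the appendix) for the corresponding step there.

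First I would fix, once and for all, the reference normal frame $\zeta_i^0 \in C^\infty(B, \R^{n+d})$ of $u$, which depends only on $u$. Given $v$ with $\|v - u\|_{C^1} < \rho_0$, the $n$ columns of $\nabla v$ span an $n$-dimensional subspace $T_x$ at each $x$ (for $\rho_0$ small, since $\nabla u$ has full rank on the compact set $B$ and this is an open condition), and the orthogonal projection $\pi_x$ onto $T_x$ is a smooth function of the entries of $\nabla v(x)$ that stays close to the projection associated with $u$. The normal vectors are then obtained by applying Gram--Schmidt to the vectors $(\mathrm{Id} - \pi_x)\zeta_i^0(x)$, $i = 1,\dots,d$: for $\rho_0$ small these remain linearly independent and close to $\zeta_i^0(x)$, so Gram--Schmidt does not degenerate, and this yields $\zeta_i(v)$ satisfying \eqref{pnormals1}--\eqref{pnormals2} by construction. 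The point where Lemma \ref{l: normals} enters is precisely in carrying out this pointwise orthonormalization in a way that is smooth in the data and amenable to the chain-rule estimates; one writes $\zeta_i(v) = G_i(\nabla v, x)$ for a smooth map $G_i$ defined on an open neighborhood (in the $\nabla v$ variable) of the compact set of values taken by $\nabla u$, with $G_i$ and all its derivatives bounded there by constants depending only on $u$.

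For the estimate \eqref{pnormals3} I would then apply the composition estimate \eqref{e:chain1} of Proposition \ref{p:chain} to $\zeta_i(v) = G_i(\nabla v, \cdot)$: this gives $[\zeta_i(v)]_k \leq C([\nabla v]_k [G_i]_1 + [\nabla v]_1^k [G_i]_k) \leq C_k (1 + [v]_{k+1} + \|v\|_2^k)$, and by interpolation \eqref{e:Holderinterpolation2} together with $\|v - u\|_{C^1} < \rho_0$ (so $\|v\|_1$ is bounded by a constant depending on $u$) the intermediate terms $\|v\|_2, \dots, \|v\|_k$ are all controlled by $1 + \|v\|_{k+1}$, yielding the claimed bound $[\zeta_i(v)]_k \leq C_k(1 + \|v\|_{k+1})$ with $C_k$ depending only on $u$ (through $\rho_0$ and the $C^\infty$ bounds on $G_i$ over the relevant compact neighborhood). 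The main obstacle, such as it is, is bookkeeping: making sure the neighborhood on which $G_i$ is defined and smoothly bounded is fixed in terms of $u$ alone (so that $\rho_0$ and the $C_k$ genuinely depend only on $u$), and checking that the Gram--Schmidt / orthonormalization step does not degenerate uniformly — both of which are handled by compactness of $B$ and the explicit form of Lemma \ref{l: normals}. Since the paper defers the details to the appendix, I would at this point simply record the reduction and refer there.
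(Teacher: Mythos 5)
Your construction is the same as the paper's: fix the orthonormal normal frame of $u$, project it orthogonally onto the normal space of $v$ (possible and non-degenerate for $\rho_0$ small by compactness), then apply Gram--Schmidt; so the proposal is correct and follows essentially the paper's route. The only real divergence is in how the bound \eqref{pnormals3} is extracted. You package the whole pointwise construction as a fixed smooth map $\zeta_i(v)(x)=G_i(\nabla v(x),x)$ defined on a compact neighborhood of $\{(\nabla u(x),x)\}$ and invoke the composition estimate \eqref{e:chain1} together with interpolation \eqref{e:Holderinterpolation2} (the key point being $[\nabla v]_1^k\leq C\|\nabla v\|_0^{k-1}[\nabla v]_k\leq C(u)\|v\|_{k+1}$, using the uniform $C^1$ bound); this is correct and somewhat slicker. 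The paper instead computes by hand: it writes the projection coefficients through the Gram matrix $(\nabla v^{\intercal}\nabla v)^{-1}$, exploits the identity $\langle\zeta_i,\partial_j v\rangle=\langle\zeta_i,\partial_j(v-u)\rangle$ to get the sharper bounds $\|r_{ij}(v)\|_k\leq C_k\|v-u\|_{k+1}$ and $\|\zeta_i(v)-\zeta_i\|_0\leq C\|v-u\|_1$, and runs an induction through the Gram--Schmidt steps; those refinements are only used internally to control the non-degeneracy quantitatively, which in your version is handled by compactness, so both arguments deliver \eqref{pnormals3}. One small misattribution: Lemma \ref{l: normals} is precisely what furnishes the global orthonormal normal frame of $u$ (the triviality of its normal bundle that you invoke at the start), not the smooth orthonormalization of the perturbed vectors, which is plain Gram--Schmidt.
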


\subsection{Decomposition of the metric error}
We use the following decomposition of the metric error, in the spirit of Lemma 2.3 in \cite{Kallen}. The proof is a simple application of the implicit function theorem and is provided in the appendix. 
\begin{proposition} \label{p:decomp}
There exists $r_0>0$ and $\nu_1,\ldots,\nu_{n_*}\in\mathbb{S}^{n-1}$ with the following property. If $\tau :\bar B_1\to\text{Sym}_n^{+}$ and $\displaystyle \{M_i\}_{i=1,\ldots,n_*}$, $\displaystyle \{\Lambda_{ij}\}_{i,j=1,\ldots,n_*}\subset C^{\infty}(\bar B_1, \text{Sym}_n)$ are such that 
\[ \| \tau -\text{Id}\|_0 + \sum_{i=1}^{n_*} \|M_i\|_0+ \sum_{i,j=1}^{n_*}\|\Lambda_{ij}\|_0 < r_0 \,,\]
then there exist smooth functions $c_1,\ldots,c_{n_*}: \bar B_1\to \R$ with 
\begin{equation}\label{e:pdecomp1}
 \forall x\in \bar B_1 : \quad \tau(x) = \sum_{i=1}^{n_*}c_i^{2}(x) \nu_i\otimes \nu_i + \sum_{i=1}^{n_*}c_i(x) M_i(x) + \sum_{i,j=1}^{n_*} c_i(x)c_j(x) \Lambda_{ij}(x)\,,
\end{equation}
$c_i(x) > r_0$ on $\bar B_1$, and for any $\Omega\subset \bar B_1$
\begin{equation}\label{e:pdecomp2} 
\| c_i \|_{k,\Omega} \leq C_{k}\left (1 + \|\tau\|_{k,\Omega} + \sum_{i=1}^{n_*}\|M_i\|_{k,\Omega}  + \sum_{i,j =1}^{n_*} \|\Lambda_{ij}\|_{k,\Omega}\right ) \,.
\end{equation}
\end{proposition}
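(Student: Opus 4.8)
The statement to prove is Proposition~\ref{p:decomp}: the decomposition of a symmetric-positive-definite tensor field $\tau$ close to the identity as a prescribed combination of rank-one terms $c_i^2\,\nu_i\otimes\nu_i$ plus lower-order correction terms involving the given fields $M_i$ and $\Lambda_{ij}$, with quantitative Hölder bounds on the coefficient functions $c_i$.

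\medskip

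\textbf{The plan.} The decomposition is an identity in the finite-dimensional vector space $\mathrm{Sym}_n$, which has dimension $n_* := \frac{n(n+1)}{2}$, and the strategy is to set it up as a pointwise equation to which the implicit function theorem applies, with the space variable $x$ entering only as a smooth parameter. First I would choose the directions $\nu_1,\dots,\nu_{n_*}\in\mathbb S^{n-1}$ so that the symmetric rank-one matrices $\nu_i\otimes\nu_i$ form a basis of $\mathrm{Sym}_n$; this is classical (generic unit vectors work, or one uses an explicit configuration). Having fixed such $\nu_i$, the map $F_0:\R^{n_*}\to\mathrm{Sym}_n$, $F_0(a_1,\dots,a_{n_*}) = \sum_i a_i\,\nu_i\otimes\nu_i$ is a linear isomorphism. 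Then the map
\[
G(c,x) := \sum_{i=1}^{n_*}c_i^2\,\nu_i\otimes\nu_i + \sum_{i=1}^{n_*}c_i M_i(x) + \sum_{i,j=1}^{n_*}c_i c_j\,\Lambda_{ij}(x)
\]
satisfies $G(\bar c,x)=\mathrm{Id}$ at the base point where $\bar c_i$ solves $\sum_i \bar c_i^2\,\nu_i\otimes\nu_i = \mathrm{Id}$ with all $\bar c_i$ bounded away from $0$ (such a solution exists and can be taken with $\bar c_i$ comparable to a fixed positive constant, e.g.\ by a symmetry/averaging argument, and in particular $\bar c_i > 2r_0$ for $r_0$ small); here I would use that the $M_i,\Lambda_{ij}$ vanish to leading order, i.e.\ I set up the IFT around the configuration with $M_i\equiv\Lambda_{ij}\equiv 0$ and $\tau\equiv\mathrm{Id}$, treating all of $(\tau,\{M_i\},\{\Lambda_{ij}\})$ as the ``parameter''. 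The partial derivative $D_c G$ at $(\bar c, 0)$ equals $c\mapsto 2\sum_i \bar c_i c_i\,\nu_i\otimes\nu_i$, which is invertible because the $\nu_i\otimes\nu_i$ are a basis and the $\bar c_i$ are nonzero. Hence the IFT gives, for $\|\tau-\mathrm{Id}\|_0 + \sum\|M_i\|_0 + \sum\|\Lambda_{ij}\|_0 < r_0$ with $r_0$ small, unique smooth solutions $c_i(x)$ close to $\bar c_i$, so in particular $c_i(x) > r_0$ on $\bar B_1$ after shrinking $r_0$; smoothness in $x$ follows from smoothness of $\tau,M_i,\Lambda_{ij}$ in $x$ and the smooth dependence in the IFT.

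\medskip

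\textbf{The Hölder estimates.} The bound \eqref{e:pdecomp2} does not come directly from the abstract IFT; I would obtain it by differentiating the defining identity $G(c(x),x)=\tau(x)$ and iterating. Writing the identity out and applying $D^\beta$ for $|\beta|=k$, one gets $D_c G(c(x),x)\cdot D^\beta c = D^\beta\tau - (\text{terms with fewer than }k\text{ derivatives on }c)$; since $D_c G$ is uniformly invertible along the solution (its inverse is bounded by a constant depending only on $r_0$ and the $\nu_i$), this expresses $[c]_k$ in terms of $[\tau]_k$, $[M_i]_k$, $[\Lambda_{ij}]_k$ and lower-order Hölder norms of $c$ multiplied by Hölder norms of $\tau,M_i,\Lambda_{ij}$. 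Using the Leibniz rule \eqref{e:Holderproduct}/\eqref{e:product1}, the interpolation inequalities \eqref{e:Holderinterpolation2}, the chain-rule estimates of Proposition~\ref{p:chain} for the composition $c\mapsto c^2$ and for $x\mapsto M_i(x),\Lambda_{ij}(x)$, and an induction on $k$, the lower-order terms are absorbed and one arrives at the asserted estimate $\|c_i\|_{k,\Omega}\leq C_k\bigl(1+\|\tau\|_{k,\Omega}+\sum\|M_i\|_{k,\Omega}+\sum\|\Lambda_{ij}\|_{k,\Omega}\bigr)$; the ``$1+$'' accounts for the zeroth-order term $\|c_i\|_0\leq \bar c_i + r_0$, which is an absolute constant.

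\medskip

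\textbf{Main obstacle.} The conceptual content (choosing $\nu_i$, invertibility of $D_cG$, existence via IFT) is short; the bookkeeping for \eqref{e:pdecomp2}, in particular keeping track of which Hölder norms appear and verifying that no product of two ``high'' norms is generated when only one is allowed on the right-hand side, is the part that requires care. The key point making this work is that $D_cG$ is bounded below \emph{uniformly in $x$} along the solution branch, so inverting it costs only a dimensional constant and never a norm of the data; combined with the fact that the nonlinear terms in $c$ are purely polynomial (quadratic), the chain-rule and interpolation estimates close the induction cleanly. I would also note that the locality of the estimate over an arbitrary subdomain $\Omega\subset\bar B_1$ is automatic, since the whole construction is pointwise in $x$ and the relevant constants are independent of $x$.
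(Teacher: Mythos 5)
Your proposal is correct and follows essentially the same route as the paper: both set up the pointwise identity as an implicit-function-theorem problem around the base point $(\tau,\{M_i\},\{\Lambda_{ij}\})=(\mathrm{Id},0,0)$, $c=\bar c$ with $\sum_i\bar c_i^2\,\nu_i\otimes\nu_i=\mathrm{Id}$ and $\bar c_i$ bounded away from zero (the paper packages your ``symmetry/averaging'' step for choosing the $\nu_i$ and the positive coefficients as Lemma~\ref{l:decomp}), and both use invertibility of $\partial_c$ at that point via linear independence of the $\nu_i\otimes\nu_i$. The only cosmetic difference is in deriving \eqref{e:pdecomp2}: the paper writes $c_k(x)=\Phi(\{\Lambda_{ij}(x)\},\{M_i(x)\},\tau(x))_k$ for the fixed smooth local inverse $\Phi$ and applies the composition estimate of Proposition~\ref{p:chain} in one stroke, whereas you differentiate the implicit equation and induct, which works but requires the extra bookkeeping you flag.
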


\subsection{Cutoff functions} In order to keep the boundary values the same along the iteration we will multiply the perturbations with a suitable cutoff function. The following lemma clarifies the type of cutoff we will use and its most important properties. 
\begin{lemma}\label{l:cutoff} There exist universal constants $\varepsilon>0, C\geq 1$ and a sequence of radially symmetric cutoff functions $\left (\eta_q\right)_{q\in\N} \subset  C^{\infty}_c\left (\bar B_1\right )$ such that for any $q\in\N$ we have 
\begin{align}
 &\eta_q \equiv 1 \text{ on } \bar B_{1-(R+1)\delta_{q+1}} \,\text{ and }\, \eta_q \equiv 0 \text{ on } \bar B_1\setminus B_{1-R\delta_{q+1}}\,,\\ 
 & [\eta_q]_k \leq C\delta_{q+1}^{-k}\, \text{ for } k\geq 0\,,\label{e:etaderivatives}\\ 
 & \eta_q\leq \varepsilon \Rightarrow |\nabla \eta_q ^{\intercal}\nabla \eta_q| \leq C\delta_{q+1}^{-2} \eta_q\,. \label{e:eta'smallness}
\end{align}
\end{lemma}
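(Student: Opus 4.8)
\textbf{Proof plan for Lemma \ref{l:cutoff}.}

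The plan is to construct the cutoffs $\eta_q$ by rescaling and translating a single one-dimensional profile. First I would fix a smooth function $\psi:\R\to[0,1]$ with $\psi\equiv 1$ on $]-\infty,1]$ and $\psi\equiv 0$ on $[2,\infty[$, and crucially chosen so that $\psi$ behaves like a power of the distance to $\{\psi=0\}$ near the right endpoint: concretely, take $\psi(t)=\chi(2-t)$ on $[1,2]$ where $\chi(s)=s^{4}$ for $s$ near $0$ (any even power $\geq 3$ works, the point being that $|\chi'|^{2}/\chi$ stays bounded near $0$). Then set, for $r=|x|$,
\[
\eta_q(x) := \psi\!\left(\frac{1-r}{\delta_{q+1}} - R\right),
\]
so that $\eta_q\equiv 1$ when $\frac{1-r}{\delta_{q+1}}-R\leq 1$, i.e. on $\bar B_{1-(R+1)\delta_{q+1}}$, and $\eta_q\equiv 0$ when $\frac{1-r}{\delta_{q+1}}-R\geq 2$; to get the stated support condition $\eta_q\equiv 0$ on $\bar B_1\setminus B_{1-R\delta_{q+1}}$ one adjusts the profile so that $\psi\equiv 0$ already on $[1,\infty[$ and $\psi\equiv 1$ on $]-\infty,0]$ (i.e. replace the transition interval $[1,2]$ by $[0,1]$) — this is a cosmetic matter of where the transition sits. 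Radial symmetry and smoothness (including at the origin, since $\eta_q\equiv 1$ near $0$ for $q$ large) are automatic.

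Next I would verify the derivative bounds \eqref{e:etaderivatives}. Writing $\eta_q = \psi\circ F_q$ with $F_q(x) = \delta_{q+1}^{-1}(1-|x|) - R$, one has $\nabla F_q = -\delta_{q+1}^{-1}\frac{x}{|x|}$ away from the origin, and more generally every $k$-th derivative of $F_q$ on the annulus $\{1-(R+2)\delta_{q+1}\leq |x|\leq 1\}$ where $\eta_q$ is non-constant is bounded by $C_k\delta_{q+1}^{-1}$ times a bounded function of $x$ (the derivatives of $x\mapsto |x|$ are bounded on a fixed annulus away from $0$, uniformly once $\delta_{q+1}$ is small). By the chain-rule estimate \eqref{e:chain1} from Proposition \ref{p:chain}, applied on this annulus,
\[
[\eta_q]_k \leq C\big([F_q]_k[\psi]_1 + [F_q]_1^{k}[\psi]_k\big) \leq C(\psi)\,\delta_{q+1}^{-k},
\]
which is \eqref{e:etaderivatives}.

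The key point is \eqref{e:eta'smallness}, and this is where the special choice of $\psi$ enters. On the set $\{\eta_q\leq\varepsilon\}$ the argument $t=F_q(x)$ lies in the region where $\psi(t)\leq\varepsilon$, which — choosing $\varepsilon$ small — forces $t$ to be close to the right endpoint of the transition, precisely where $\psi(t)=\chi(\text{dist})$ with $\chi$ a high even power. For such $t$ we have $|\psi'(t)|^{2}\leq C\,\psi(t)$ by the elementary inequality $|\chi'(s)|^{2} = 16 s^{6}\leq 16 s^{4} = 16\,\chi(s)$ valid for $|s|\leq 1$ (this is exactly why an even power $\geq 3$, rather than a generic transition, is needed: a generic smooth $\psi$ has $\psi'\neq 0$ where $\psi$ is already $0$). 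Since $\nabla\eta_q = (\psi'\circ F_q)\nabla F_q$ and $|\nabla F_q| = \delta_{q+1}^{-1}$, we get
\[
|\nabla\eta_q^{\intercal}\nabla\eta_q| = |\psi'(F_q)|^{2}\,|\nabla F_q|^{2} \leq C\,\psi(F_q)\,\delta_{q+1}^{-2} = C\,\delta_{q+1}^{-2}\,\eta_q
\]
on $\{\eta_q\leq\varepsilon\}$, which is \eqref{e:eta'smallness}. The constants $\varepsilon$ and $C$ depend only on the fixed profile $\psi$, hence are universal.

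\textbf{Main obstacle.} The only real subtlety is \eqref{e:eta'smallness}: it fails for an arbitrary smooth cutoff and is the reason one cannot just quote a standard partition-of-unity construction. One must build the transition so that, near the zero set, the function vanishes to sufficiently high (even) order and — simultaneously — its gradient is controlled by a constant times (the square root of) the function itself; choosing $\psi$ polynomial of the form $\text{dist}^{4}$ near the zero set is the cleanest way to guarantee this, and one should double-check that such a $\psi$ can still be completed to a globally $C^{\infty}$ monotone profile on the transition interval (it can: glue the power behaviour near the endpoint to a smooth monotone interpolation using a further cutoff, noting that the gluing happens in the region $\{\psi\geq\varepsilon_0\}$ where \eqref{e:eta'smallness} is vacuous). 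Everything else is a routine chain-rule computation.
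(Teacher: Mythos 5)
Your overall strategy is the same as the paper's: build a single one--dimensional profile whose square of the derivative is controlled by the profile itself near its zero set, and then obtain $\eta_q$ by composing with the rescaled radial variable $\delta_{q+1}^{-1}(1-R\delta_{q+1}-|x|)$. The rescaling step, the chain-rule bounds for \eqref{e:etaderivatives}, and the reduction of \eqref{e:eta'smallness} to the one-dimensional inequality $(\psi')^2\leq C\psi$ on $\{\psi\leq\varepsilon\}$ are all correct and match the paper. (The orientation of your transition interval is backwards relative to your formula for $\eta_q$, but as you say this is cosmetic.)

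There is, however, one genuine gap: your profile is not $C^\infty$. Taking $\psi$ equal to $(\mathrm{dist})^4$ near the endpoint of its support and extending by $0$ produces a function of class $C^{3}$ (with Lipschitz third derivative) but with a jump in the fourth derivative at the gluing point, and no finite even power repairs this. Since the lemma asserts $\eta_q\in C^{\infty}_c(\bar B_1)$ and bounds $[\eta_q]_k$ for \emph{all} $k\geq 0$ --- and since the iteration of Proposition \ref{p:stage} requires the maps $v_{q+1}$ to remain smooth --- this matters. Your closing remark that ``an even power $\geq 3$ is needed'' suggests you believe a polynomial order of vanishing suffices, which is exactly where the construction fails. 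The fix is standard and the paper's proof supplies one: mollify a piecewise-linear ramp $f$ by a standard kernel to get $h=f\ast\varphi_\ell\in C^\infty$, and observe that $h''\to 0$ at the left endpoint of $\{h>0\}$ (because the mollifier vanishes to infinite order at the edge of its support), so that $\frac{d}{dr}\bigl((h')^2-h\bigr)=h'(2h''-1)\leq 0$ there and hence $(h')^2\leq h$ on $\{h\leq\varepsilon\}$. Alternatively, a flat profile such as $e^{-1/s}$ near the zero set works, since $\bigl|(e^{-1/s})'\bigr|^2=s^{-4}e^{-2/s}\leq Ce^{-1/s}$ for small $s>0$. With either replacement of your polynomial transition, the rest of your argument goes through unchanged.
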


\begin{proof} 
Define $f\in C^{0}(\R)$ by $f\equiv 0$ on $]-\infty,\frac{1}{4}]$, $f\equiv 1$ on $[\frac{3}{4},+\infty[$ and linear in between. Smoothing out the corners by mollifying $f$ with a standard mollifying kernel $\varphi_\ell$ with parameter $\ell <\frac{1}{4}$ we find a function $h= f\ast \varphi_\ell\in C^{\infty}(\R)$ satisfying $h\equiv 0$ on $]-\infty,0]$ and $h\equiv 1$ on $[1,+\infty[$. Also, since $h''(r) \to 0$ as $r\to 0$, we can find $\varepsilon>0$ such that 
\[ h\leq\varepsilon \Rightarrow (h')^{2} \leq h\,.\]
The sequence $\eta_q$ is then easily constructed by setting, for $x\in \bar B_1$, 
\[ \eta_q(x) := h\left (\delta_{q+1}^{-1}\left (1-R\delta_{q+1}-|x|\right )\right )\,.\qedhere\]
\end{proof}

\subsection{Parameters} To counteract the loss of derivatives appearing along the iteration we mollify the map by convolution with a standard kernel so that we can control higher derivatives with the mollification parameter $\ell$. However, we have to make sure that this parameter is chosen small enough to keep the metric error \eqref{pstage:shortness} of the same size. It turns out that the right choice is 
\begin{equation}\label{d:l}
\ell := \frac{1}{\tilde C} \frac{\delta_{q+1}^{\sfrac{1}{2}}}{\delta_q^{\sfrac{1}{2}}\lambda_q}\,,
\end{equation}
where $\tilde C\geq 1$ is a universal constant, depending additionally on $\tilde u$, $g$, $R$, $\Lambda$ and $C_0$, which will be chosen in Lemma \ref{l:decompose}. In the course of the proof we will need the following hierarchy of the parameters 
\begin{equation}\label{e:parameters1}
\delta_{q+1}^{-1}\leq \delta_{q+2}^{-1}\leq \ell^{-1}\leq \lambda_{q+1}\,.
\end{equation}
The first inequality is true by definition, while the second follows from 
\begin{align*}
\log_a(\delta_{q+2}\ell^{-1}) & =\log_a\left (\tilde C\delta_q\delta_{q+1}^{-\sfrac{1}{2}}\delta_{q+2}\lambda_q\right ) >-\frac{1}{2}b^{q}+\left (c+\frac{1}{2}\right )b^{q+1}-b^{q+2}\\
&= b^{q}\left (\frac{1}{2}(b-1)+b(c-b)\right )>0\,.
\end{align*} 
In particular, we also have 
\begin{equation}\label{e:parameters2}
\delta_{q+1}^{-\sfrac{1}{2}}\leq \delta_q^{\sfrac{1}{2}}\lambda_q\,.
\end{equation}
The last inequality in \eqref{e:parameters1} is a consequence of the following stronger estimate, which will be needed in Section \ref{s:conclusion}. Fix any constant $\hat C(b,c,\sigma_0,\tilde u, g,\lambda,R,\Lambda,C_0)$. Then, if $a\geq a_0(\hat C)$ is chosen large enough, we have 
\begin{equation}\label{e:errorsize}
\hat C \frac{\delta_{q+1}}{\ell^{2}\lambda_{q+1}^{2}}\leq \delta_{q+2}\,.
\end{equation}
Indeed, inserting the definition of $\ell$ we see that the inequality is satisfied if 
\[ \hat C^{-1}\tilde C^{-2} \delta_q^{-1}\lambda_q^{-2}\delta_{q+2}\lambda_{q+1}^{2}\geq 1\,.\]
Taking the logarithms gives 
\[ b^{q}\left (b^{2}(2c-1)-2bc+1\right )-\log_a\left (\hat C\tilde C^{2}\right )\geq 0\,.\]
Rewriting the first term, we find
\[ b^{q}(b-1)\left (b(2c-1)-1\right )-\log_a\left (\hat C\tilde C^{2}\right )\geq 0\,.\]
This inequality is satisfied if $a$ is chosen large enough, so that \eqref{e:errorsize} holds. 
\section{Proof of Proposition \ref{p:stage}: Setup}

\subsection{Mollification} Fix a standard, symmetric mollifier, i.e. a radially symmetric, nonnegative function $\varphi\in C^{\infty}_c(B_1)$ on $\R^{n}$ with unit integral and set $\varphi_\ell(x) = \ell^{-n}\varphi(x/\ell)$.
We define the mollification parameter  $\ell$ by \eqref{d:l} and set 
\begin{equation} \label{d:barv_q} 
\bar v_q := (v_q- \tilde u)\ast \varphi_\ell + \tilde u \,,
\end{equation} 
which mollifies the map $v_q$ while keeping the boundary value: since $\delta_q^{\sfrac{1}{2}}\lambda_q > \delta_{q+1}^{-\sfrac{1}{2}} $ we have
 $\ell < \frac{1}{2}R\delta_{q+1}$ if $\tilde C$ is chosen large enough, so that, thanks to \eqref{pstage:v_q}, it holds
\[ \bar v_q = \tilde u \text{ on } \bar B_1 \setminus B_{1-\frac{1}{2}R\delta_{q+1}}\,.\]
Lastly, we set
 \begin{equation}\label{d:tau}
 \tau := \frac{\tilde g- \bar v_q ^{\sharp}e}{h_q}-\frac{\delta_{q+2}}{h_q}e \,.
 \end{equation} 
Observe that $\tau$ is welldefined and smooth on every compactly contained $\Omega\subset B_1$. We gather a few important estimates on $\bar v_q$ and $\tau$ in the next
\begin{lemma}\label{l:decompose} If $\tilde C(\tilde u,\Lambda,C_0)$, $a_0(C_0,\Lambda)$ and $R(\lambda)$ are chosen large enough and if $\sigma_0>0$ is chosen small enough, then, for $k=0,1,2$, we have 
\begin{align} 
&[\bar v_q ]_{k+1} \leq C(1+\delta_{q+1}^{\sfrac{1}{2}}\ell^{-k})\,,\label{e:barv_qk}\\
&[\bar v_q^{\sharp}e - v_q^{\sharp}e\ast \varphi_\ell]_{k}\leq C\ell^{2-k}[v_q]_2^{2}\,,\\
& |\tau-e| \leq  \frac{r_0}{2}\, \text{ on } \bar B_{1-R\delta_{q+2}}\,,\label{e:tau0}\\ 
 &|D^{k}\tau| \leq C\ell^{-k} \, \text{ on } \bar B_{1-R\delta_{q+2}}\label{e:tauk} \,,
\end{align}
for some constant $C$ depending on $\tilde u$ and $\Lambda$. 
\end{lemma}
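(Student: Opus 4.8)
The plan is to estimate each of the four quantities in turn, treating $\bar v_q$ first and then $\tau$, using only the hypotheses \eqref{pstage:w_q}--\eqref{pstage:shortness} on $(\tilde g, v_q, h_q)$, the mollification estimates of Lemma \ref{l:mollify}, the Leibniz/chain-rule estimates of Proposition \ref{p:chain}, and the parameter hierarchy \eqref{e:parameters1}--\eqref{e:errorsize}. For \eqref{e:barv_qk}: since $\bar v_q - \tilde u = (v_q - \tilde u)\ast\varphi_\ell$, I would split $[\bar v_q]_{k+1} \leq [\tilde u]_{k+1} + [(v_q-\tilde u)\ast\varphi_\ell]_{k+1}$. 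For the second term, when $k=0$ apply \eqref{e:mollify1} with $s=0$ together with the $C^1$ bound $\|v_q-\tilde u\|_1 < C_0\sum_k \delta_k^{1/2}$ from \eqref{pstage:v_q} (bounded by a constant); when $k=1,2$ apply \eqref{e:mollify1} with $r=2$, $s=k-1$ to get $[(v_q-\tilde u)\ast\varphi_\ell]_{k+1}\leq C\ell^{-(k-1)}[v_q-\tilde u]_2 \leq C\ell^{-(k-1)}(C_0\delta_q^{1/2}\lambda_q + [\tilde u]_2)$; then $\delta_q^{1/2}\lambda_q = C\delta_{q+1}^{1/2}\ell^{-1}$ by \eqref{d:l}, which converts this into the claimed $C(1+\delta_{q+1}^{1/2}\ell^{-k})$. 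The estimate for $[\bar v_q^\sharp e - v_q^\sharp e \ast\varphi_\ell]_k$ is the standard commutator estimate for the pull-back quadratic form: writing $\bar v_q^\sharp e = \nabla\bar v_q^\intercal\nabla\bar v_q$ and using that $\bar v_q = (v_q)\ast\varphi_\ell$ up to the fixed additive $\tilde u$ pieces (which I would handle by noting $\bar v_q^\sharp e - v_q^\sharp e\ast\varphi_\ell$ only sees the difference of $v_q$ from $\tilde u$), this is precisely of the form \eqref{e:mollify3}; more directly, it follows from applying \eqref{e:mollify3} to $f=g=\partial_i v_q$ (after subtracting $\tilde u$), giving $C\ell^{2\alpha-k}\|\nabla v_q\|_\alpha^2$, and then interpolating $\|\nabla v_q\|_\alpha \leq C\|v_q\|_1^{1-\alpha}[v_q]_2^\alpha$, so that the exponent bookkeeping collapses to $C\ell^{2-k}[v_q]_2^2$ after using $\|v_q\|_1\leq C$.

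For the two estimates on $\tau$ I would first rewrite $\tau - e = \frac{1}{h_q}\big((\tilde g - v_q^\sharp e) - h_q e\big) + \frac{1}{h_q}\big(v_q^\sharp e - \bar v_q^\sharp e\big) - \frac{\delta_{q+2}}{h_q}e$. The first bracket is controlled by \eqref{pstage:shortness}: $|(\tilde g - v_q^\sharp e) - h_q e| \leq \sigma_0(1+\eta_q)h_q \leq 2\sigma_0 h_q$, so dividing by $h_q$ gives a term bounded by $2\sigma_0$, which is $\leq r_0/6$ once $\sigma_0$ is small. The third term is $\delta_{q+2}/h_q$; on $\bar B_{1-R\delta_{q+2}}$ one has $h_q \geq c\,\delta_{q+1}$ (from \eqref{pstage:h_q1}, noting $1-R\delta_{q+2}$ is well inside $1-R\delta_{q+1}$ once $a$ is large, so the lower bound $\Lambda^{-1}\delta_{q+1}$ applies, or using linearity and $h_q'(1)=-\lambda$ near the boundary), hence $\delta_{q+2}/h_q \leq C\Lambda\,\delta_{q+2}/\delta_{q+1} \to 0$ as $a\to\infty$, so this is $\leq r_0/6$ for $a$ large. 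The middle term, using the commutator estimate \eqref{e:mollify2} (or the already-established second estimate of this lemma with $k=0$), is bounded by $C\ell^2[v_q]_2^2/h_q \leq C\ell^2\delta_q\lambda_q^2/\delta_{q+1}$; since $\ell^2\delta_q\lambda_q^2 = C\delta_{q+1}$ by \eqref{d:l}, this is $\leq C$, which is the wrong size — so here I must be more careful and instead use $\ell < \frac12 R\delta_{q+1}$ is far too crude; the correct route is to bound $[v_q]_2 \leq C_0\delta_q^{1/2}\lambda_q$ and $\ell^2[v_q]_2^2 = \ell^2 C_0^2\delta_q\lambda_q^2 = C_0^2\tilde C^{-2}\delta_{q+1}$, then divide by $h_q\geq\Lambda^{-1}\delta_{q+1}$ to get $C_0^2\Lambda\tilde C^{-2}$, which is made $\leq r_0/6$ by choosing $\tilde C$ large. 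This gives \eqref{e:tau0}.

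For the derivative bounds \eqref{e:tauk}, I would differentiate the quotient $\tau = (\tilde g - \bar v_q^\sharp e)/h_q - \delta_{q+2}e/h_q$ using the Leibniz rule \eqref{e:Holderproduct}/\eqref{e:product1} together with the estimate $[1/h_q]_k \leq C\delta_{q+1}^{-1}(\delta_{q+1}^{-1})^k = C\delta_{q+1}^{-1-k}$ — obtained from \eqref{pstage:h_q2}, the lower bound $h_q\geq\Lambda^{-1}\delta_{q+1}$, and the Faà di Bruno / chain rule for reciprocals — and the numerator estimates $[\tilde g]_k\leq C_0(1+\delta_1^{1-k})\leq C\delta_{q+1}^{1-k}$ (crude, but fine since $\delta_1 \geq \delta_{q+1}$), $[\bar v_q^\sharp e]_k \leq C(1+\delta_{q+1}\ell^{-k-1}\cdot\ell) $-type bounds coming from \eqref{e:barv_qk} and the product rule, i.e. $[\bar v_q^\sharp e]_k \leq C([\bar v_q]_1[\bar v_q]_{k+1} + \ldots) \leq C\delta_{q+1}\ell^{-k-1}$ after using $[\bar v_q]_{j+1}\leq C(1+\delta_{q+1}^{1/2}\ell^{-j})$ and $\delta_{q+1}^{1/2}\ell^{-1} = C\delta_q^{1/2}\lambda_q \cdot \delta_{q+1}^{1/2}\cdot\ell\cdot\ell^{-1}$... more simply, $[\bar v_q^\sharp e]_k\leq C\delta_{q+1}\ell^{-k}\cdot\ell^{-1}\cdot\ell = $ — the clean statement is $[\tilde g - \bar v_q^\sharp e]_k \leq C\delta_{q+1}\ell^{-k}$ for $k\geq 1$ (the dominant contribution being $\delta_{q+1}^{1/2}\ell^{-1}\cdot\delta_{q+1}^{1/2}\ell^{-(k-1)}$ from two derivatives landing unevenly, plus lower-order terms, all $\leq C\delta_{q+1}\ell^{-k}$ since $\ell^{-1}\geq\delta_{q+1}^{-1}$). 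Combining via Leibniz, $[\tau]_k \leq \sum_{j=0}^k [\tilde g - \bar v_q^\sharp e]_j [1/h_q]_{k-j} + \delta_{q+2}[1/h_q]_k \leq \sum_j C\delta_{q+1}\ell^{-j}\cdot\delta_{q+1}^{-1-(k-j)}\cdot\delta_{q+1} + C\delta_{q+2}\delta_{q+1}^{-1-k}$; using $\ell^{-1}\gg\delta_{q+1}^{-1}$ the sum is dominated by $j=k$, giving $C\delta_{q+1}\ell^{-k}\delta_{q+1}^{-1}\cdot\delta_{q+1} = C\ell^{-k}$ — wait, that has an extra $\delta_{q+1}$; the correct counting is $[\tilde g-\bar v_q^\sharp e]_k \cdot [1/h_q]_0 \leq C\delta_{q+1}\ell^{-k}\cdot\delta_{q+1}^{-1} = C\ell^{-k}$, which is the claimed bound, and all other terms are smaller. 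This yields \eqref{e:tauk}.

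The main obstacle is the bookkeeping in \eqref{e:tau0}: one must track that the mollification-commutator contribution to $\tau - e$ is $C_0^2\Lambda\tilde C^{-2}$ and hence absorbable only by choosing the universal-up-to-$(\tilde u,\Lambda,C_0)$ constant $\tilde C$ large (not by $\sigma_0$ or $a$), while the $\delta_{q+2}/h_q$ term is absorbed by $a$ large and the shortness defect by $\sigma_0$ small — getting the order of quantifiers right ($\sigma_0$ then $\tilde C$ then $R$ then $a_0$, matching the statement) is the delicate point, and one must also verify that $\bar B_{1-R\delta_{q+2}}\subset B_{1-(R+1)\delta_{q+1}}$ (where $h_q$ is genuinely bounded below and $\tau$ is smooth) for $a$ large, since $R\delta_{q+2}\ll\delta_{q+1}$.
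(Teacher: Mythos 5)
Your treatment of the first two estimates (the bounds on $[\bar v_q]_{k+1}$ and the commutator $[\bar v_q^{\sharp}e - v_q^{\sharp}e\ast\varphi_\ell]_k$) follows the paper's route and is essentially fine, but your argument for \eqref{e:tau0} and \eqref{e:tauk} has two genuine gaps. The first is geometric: you have the inclusions backwards. Since $\delta_{q+2}<\delta_{q+1}$, the ball $\bar B_{1-R\delta_{q+2}}$ is the \emph{larger} one, reaching to within $R\delta_{q+2}$ of $\partial B_1$; it \emph{contains} $\bar B_{1-(R+1)\delta_{q+1}}$, not the other way around. Consequently the region on which $\tau$ must be estimated includes the annulus $\bar B_{1-R\delta_{q+2}}\setminus B_{1-R\delta_{q+1}}$, where by \eqref{pstage:h_q1} the function $h_q$ is linear with slope $-\lambda$ and is therefore only bounded below by $\lambda R\delta_{q+2}$, not by $\Lambda^{-1}\delta_{q+1}$. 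Your uniform use of $h_q\gtrsim\delta_{q+1}$ throughout $\bar B_{1-R\delta_{q+2}}$ is therefore false, and both your bound $\delta_{q+2}/h_q\leq C\Lambda\,\delta_{q+2}/\delta_{q+1}$ and your Leibniz computation for \eqref{e:tauk} (where $[1/h_q]_k\leq C\delta_{q+1}^{-1-k}$ fails on the annulus) break down there. The paper splits the domain at $|x|=1-\tfrac12 R\delta_{q+1}$ precisely for this reason: on the outer annulus it uses $\bar v_q=v_q=\tilde u$, the pointwise bound $|\tilde g-v_q^{\sharp}e|\leq C h_q$ from \eqref{pstage:shortness} to cancel powers of $h_q$ in the quotient rule, and $h_q\geq\lambda R\delta_{q+2}$ so that $\delta_{q+2}/h_q\leq (\lambda R)^{-1}$ is absorbed by choosing $R(\lambda)$ large (not $a$ large).

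The second gap is in your "middle term" $h_q^{-1}(v_q^{\sharp}e-\bar v_q^{\sharp}e)$ in the proof of \eqref{e:tau0}. The difference $v_q^{\sharp}e-\bar v_q^{\sharp}e$ is \emph{not} a commutator, and neither \eqref{e:mollify2} nor the second estimate of the lemma controls it quadratically: \eqref{e:mollify2} applied to $f=v_q^{\sharp}e$ would require $[v_q^{\sharp}e]_2\sim [v_q]_1[v_q]_3+[v_q]_2^2$, and $[v_q]_3$ is not controlled by the hypotheses; the second estimate of the lemma controls $\bar v_q^{\sharp}e-v_q^{\sharp}e\ast\varphi_\ell$, which differs from your term by $v_q^{\sharp}e-v_q^{\sharp}e\ast\varphi_\ell$, and the best available bound for that piece is linear, $\|v_q^{\sharp}e-v_q^{\sharp}e\ast\varphi_\ell\|_0\leq C\ell\,[v_q^{\sharp}e]_1\leq C\ell[v_q]_2\sim\delta_{q+1}^{1/2}$, which after division by $h_q\sim\delta_{q+1}$ diverges like $\delta_{q+1}^{-1/2}$. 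The paper avoids this by writing $\tilde g-\bar v_q^{\sharp}e=(\tilde g-v_q^{\sharp}e-h_qe)\ast\varphi_\ell+(v_q^{\sharp}e\ast\varphi_\ell-\bar v_q^{\sharp}e)+(h_q\ast\varphi_\ell-h_q)e+(\tilde g-\tilde g\ast\varphi_\ell)$: the whole defect is mollified (its sup norm is controlled by $\sigma_0 h_q$ via \eqref{pstage:shortness} and mollification does not increase it significantly), the genuine commutator is quadratic, and the remaining mollification errors land on $\tilde g$ and $h_q$, whose second derivatives \emph{are} controlled by \eqref{pstage:w_q} and \eqref{pstage:h_q2}. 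Your identification of which parameter absorbs which term ($\tilde C$ for the commutator, $\sigma_0$ for the shortness defect) is correct in spirit, but the decomposition you propose does not produce terms to which those absorptions apply.
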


\begin{proof}
First observe that if $a_0(C_0)$ is large enough we get $\|v_q\|_1 \leq \|\tilde u\|_1 +1 \leq C(\tilde u)$.  Therefore, using again \eqref{pstage:v_q} and Lemma \ref{l:mollify},
\[ [\nabla \bar v_q]_k = [\nabla v_q\ast \varphi_\ell]_k +[\nabla(\tilde u-\tilde u\ast\varphi_\ell)]_k \leq C(\tilde u)(1+\ell^{1-k}[v_q]_2)+ C\ell^{1-k}[\tilde u]_2 \leq C(\tilde u)(1+\delta_{q+1}^{\sfrac{1}{2}}\ell^{-k})\,,\]
if $\tilde C(C_0)$ is large enough. For the second estimate we compute 
\[ \nabla \bar v_q ^{\intercal}\nabla \bar v_q = \nabla(v_q\ast\varphi_\ell)^{\intercal}\nabla(v_q\ast \varphi_\ell) + \nabla(\tilde u-\tilde u\ast\varphi_\ell)^{\intercal}\nabla(\tilde u-\tilde u\ast \varphi_\ell) +2 \text{sym}\left (\nabla(v_q\ast\varphi_\ell)^{\intercal}\nabla(\tilde u-\tilde u\ast\varphi_\ell)\right )\,,\]
where we denoted sym$(A) = \frac{1}{2}(A+A^{\intercal})$. This gives 
\begin{align*} [\bar v_q^{\sharp}e - v_q^{\sharp}e\ast \varphi_\ell]_{k}&\leq C(\tilde u)\Big( [(v_q\ast \varphi_\ell)^{\sharp}e- v_q^{\sharp}e\ast \varphi_\ell]_k +(1+[\tilde u-\tilde u\ast\varphi_\ell]_1)[\tilde u-\tilde u\ast\varphi_\ell]_{k+1}\\
&\qquad\qquad+[v_q\ast\varphi_\ell]_{k+1}[\tilde u-\tilde u\ast\varphi_\ell]_1\Big)\\ &\leq C(\tilde u)\left (\ell^{2-k}[v_q]_2^{2}+\ell^{2-k}[\tilde u]_3+(1+\ell^{1-k}[v_q]_2)\ell^{2}[\tilde u]_3\right )\leq C(\tilde u)\ell^{2-k}[v_q]_2^{2}\,. \end{align*}
We will prove the estimates \eqref{e:tau0} and \eqref{e:tauk} separately on $\bar B_{1-R\delta_{q+2}}\setminus B_{1-\frac{1}{2}R\delta_{q+1}}$ and on $\bar B_{1-\frac{1}{2}R\delta_{q+1}}$. Since on the former we have $\bar v_q = \tilde u= v_q$, and consequently
\[ \tau - e= \frac{\tilde g- v_q^{\sharp }e-  h_q e}{h_q} -\frac{\delta_{q+2}}{h_q}e\,, \] 
 it follows with \eqref{pstage:shortness} and $h_q \geq \lambda R\delta_{q+2}$ that 
 \[ |\tau -e| \leq C\sigma_0 +C\frac{1}{\lambda R} \leq \frac{r_0}{2}\,\]
 if $\sigma_0$ is small and $R (\lambda)$ large enough. By \eqref{pstage:shortness} we have the pointwise estimate $| \tilde g- v_q^{\sharp}e| \leq C|h_q|$, so that with the help of  \eqref{pstage:w_q} and \eqref{pstage:h_q2}
 \[ |\nabla \tau | \leq C\left (\frac{|\nabla(\tilde g -v_q^{\sharp}e)|}{h_q}+ \frac{|\nabla h_q|}{h_q}\right )\leq C(\tilde u)C_0\delta_{q+2}^{-1}\,, \]
 and similarly 
 \begin{align*} | D^{2}\tau|&\leq C\left ( \frac{|D^{2}h_q|}{h_q} + \frac{|\nabla h_q|\left (|\nabla h_q|+|\nabla(\tilde g-v_q^{\sharp}e)|\right ) }{h_q^{2}}+\frac{|D^{2}(\tilde g-v_q^{\sharp}e)|}{h_q}\right )\\
 & \leq C(\tilde u)C_0\left (\delta_{q+1}^{-1}\delta_{q+2}^{-1}+\delta_{q+2}^{-2}+\delta_{q+1}^{-1}\delta_{q+2}^{-1}\right )\leq C(\tilde u)C_0\delta_{q+2}^{-2}\,.\end{align*}
Observe that, if $\tilde C\geq C_0$ then $ C_0 \delta_{q+2}^{-k} \leq \ell^{-k}$ for $k=1,2$, thanks to \eqref{e:parameters1}. This shows \eqref{e:tauk} on $\bar B_{1-R\delta_{q+2}}\setminus B_{1-\frac{1}{2}R\delta_{q+1}}$. To show the estimates on $\bar B_{1-\frac{1}{2}R\delta_{q+1}}$ we write 
\begin{align*}
|\tau -e | &\leq C\frac{\delta_{q+2}}{\Lambda^{-1}\delta_{q+1}}+\frac{1}{h_q}\big| ( \tilde g-v_q^{\sharp}e-h_q e)\ast\varphi_\ell+(v_q^{\sharp}e\ast\varphi_\ell-\bar v_q^{\sharp}e) +(h_q\ast\varphi_\ell-h_q)e\\
&\qquad\qquad\qquad\qquad  +(\tilde g-\tilde g\ast\varphi_\ell)\big| \\ 
& \leq \frac{r_0}{8}+\frac{C}{h_q}\left (\sigma_0 |h_q\ast\varphi_\ell| + \ell^{2}([v_q]_2^{2}+[h_q]_2+[\tilde g]_2)\right )\\
&\leq \frac{r_0}{8}+ C\sigma_0 + \frac{C\ell^{2}}{h_q}\left( C_0^{2}\delta_q\lambda_q^{2}+C_0\delta_{q+1}^{-1} + C_0(2+\sigma_0)\delta_{q+1}^{-1}\right )\\ 
&\leq \frac{r_0}{4}+C\frac{C_0^{2}}{\tilde C^{2}\Lambda^{-1}} \leq \frac{r_0}{2}\,
\end{align*}
if $\sigma_0$ is chosen small and $\tilde C(\Lambda,C_0)$ as well as $a(\Lambda)$ large enough. This fixes the choice of $\tilde C$. For \eqref{e:tauk} we estimate 
\begin{align*} [\tilde g-\bar v_q^{\sharp}e]_k &\leq [(\tilde g-v_q^{\sharp}e)\ast\varphi_\ell]_k +[\tilde g-\tilde g\ast\varphi_\ell]_k +[v_q^{\sharp}e\ast\varphi_\ell-\bar v_q^{\sharp}e]_k  \\
& \leq C(\tilde u) \left (\ell^{-k}\|\tilde g-v_q^{\sharp}e\|_0 +\ell^{2-k}([\tilde g]_2+[v_q]_2^{2})\right )\leq C(\tilde u,\Lambda)\delta_{q+1}\ell^{-k}\,.
\end{align*}
Hence, with the help of \eqref{e:chain0} we get on  $\bar B_{1-\frac{1}{2}R\delta_{q+1}}$ 
\begin{align*} |D^{k}\tau| &\leq C\left (\Lambda\delta_{q+1}^{-1}[\tilde g-\bar v_q^{\sharp}e]_k+[h_q]_k(\Lambda^{2}\delta_{q+1}^{-2} + (\Lambda\delta_{q+1})^{k-1}(\Lambda^{-1}\delta_{q+1})^{-k-1})(\|\tilde g-\bar v_q^{\sharp}e\|_0+\delta_{q+2})\right )\\
 & \leq C(\tilde u, \Lambda)\left (\ell^{-k} +C_0\delta_{q+1}^{-k}\right )\leq C(\tilde u, \Lambda)\ell^{-k}\,.\qedhere\end{align*}
\end{proof} 

\subsection{Decomposition} 
Our goal in constructing $v_{q+1}$ is to add the (rescaled) metric error $\tau$ by an ansatz of the form 
\begin{equation}\label{d:vq+1} v_{q+1} = \bar v_q + \sum_{k=1}^{n_*} \frac{a_k}{\lambda_{q+1}}\left (\sin(\lambda_{q+1} \nu_k\cdot x)\zeta^{1}_k + \cos(\lambda_{q+1} \nu_k\cdot x)\zeta^{2}_k\right )\,,\end{equation}
where $\nu_k\in\S^{n-1}$, $a_k$ are smooth coefficients and where $\zeta^{1}_k,\zeta^{2}_k$ are smooth, mutually orthogonal unit vector fields which are normal to $\bar v_q$. 
We compute 
\begin{align} \nabla v_{q+1} = \nabla \bar v_q &+ \sum_{k=1}^{n_*}a_k\underbrace{\left (\cos(\lambda_{q+1} \nu_k\cdot x)\zeta^1_k\otimes \nu_k - \sin(\lambda_{q+1} \nu_k\cdot x)\zeta^2_k\otimes \nu_k\right )}_{=:A_k}\nonumber\\
&+ \sum_{k=1}^{n_*}\frac{a_k}{\lambda_{q+1}}\underbrace{\left (\sin(\lambda_{q+1} \nu_k\cdot x) \nabla\zeta^1_k + \cos(\lambda_{q+1} \nu_k\cdot x)\nabla\zeta^2_k\right )}_{=:B_k}\nonumber  \\ 
&+\sum_{k=1}^{n_*}\frac{1}{\lambda_{q+1}}\underbrace{\left (\sin(\lambda_{q+1} \nu_k\cdot x)\zeta^1_k + \cos(\lambda_{q+1} \nu_k\cdot x)\zeta^2_k\right )}_{=:C_k}\nabla a_k \label{e:dvq+1}\,, 
\end{align}
so that (in coordinates) the induced metric is 
\begin{align}
\nabla v_{q+1}^{\intercal}\nabla v_{q+1} = \nabla \bar v_q^{\intercal}&\nabla \bar v_q +\sum_{k=1}^{n_*}a_k^{2}\nu_k\otimes \nu_k +2\sum_{k=1}^{n_*}\frac{a_k}{\lambda_{q+1}}\text{sym}(\nabla \bar v_q ^{\intercal} B_k) +2\sum_{i,j=1}^{n_*}\frac{a_ia_j}{\lambda_{q+1}} \text{sym}(A_i^{\intercal}B_j) \nonumber \\ &+2\sum_{i,j=1}^{n_*}\frac{a_ia_j}{\lambda_{q+1}^{2}}\text{sym}\left (B_i^{\intercal}B_j\right )+ 2\sum_{i,j=1}^{n_*}\frac{a_i}{\lambda_{q+1}^{2}}\text{sym}(B_i^{\intercal}C_j\nabla a_j)+  \sum_{k=1}^{n_*}\frac{1}{\lambda_{q+1}^{2}}\nabla a_k^{\intercal }\nabla a_k\label{e:newinducedmetric}\,.
\end{align}
The usual practice is to decompose the metric error $\tilde g-\bar v_q^{\sharp}e$ into a sum of the form $\sum_{k=1}^{n_*}a_k^{2}\nu_k\otimes \nu_k$ and hence the ansatz \eqref{d:vq+1} allows the addition of the metric error upto errors which are (if $\lambda_{q+1}$ is chosen large) very small.  However, as realized in \cite{Kallen}, a better convergence rate is achieved if only the terms in the second line of \eqref{e:newinducedmetric} are treated as error terms. Consequently, one needs a slightly subtler decomposition, which is provided by Proposition \ref{p:decomp} once we know that the first error terms are small enough. This is the content of Lemma \ref{l:ugliestlemma} once we have found suitable normal vectors $\zeta_k^{1}, \zeta_k^{2}$. But this is an easy task thanks to Proposition \ref{p:normals}, once we require $a(\tilde u,C_0)$ to be so large that $C_0\sum_{k=1}^{q}\delta_q^{\sfrac{1}{2}}<\rho_0(\tilde u)$, where $\rho_0$ is given by Proposition \ref{p:normals}. Then, since 
\[ \|\bar v_q- \tilde u\|_1 = \|(v_q-\tilde u)\ast\varphi_\ell\|_1 \leq \|v_q-\tilde u\|_1< \rho_0(\tilde u)\,,\]
Proposition \ref{p:normals} provides an orthonormal family of vectorfields $\xi_1(\bar v_q),\ldots\xi_{m-n}(\bar v_q) \in C^{\infty}(\bar B_1,\R^{m})$ which are normal to $\bar v_q$ and enjoy the estimates 
\begin{align}  |D^{k}\xi_i|&\leq C(\tilde u) \, \text{ on } \bar B_{1-R\delta_{q+2}}\setminus B_{1-\frac{1}{2}R\delta_{q+1}} \label{DknormalsOmega} \\
|D^{k}\xi_i|& \leq C(\tilde u)(1+\delta_{q+1}^{\sfrac{1}{2}}\ell^{-k}) \, \text{ on } \bar B_{1-\frac{1}{2}R\delta_{q+1}}\,,\label{Dknormals}
\end{align}
for $k=0,1,2,$ thanks to \eqref{e:barv_qk}. We now define 
\begin{equation}\label{d:normals}
\zeta^{1}_i := \xi_i\, \,, \zeta^{2}_i := \xi_{n_*+i} \,,\quad \text{ for } i =1,\ldots,n_*\,, 
\end{equation}
which is possible in view of $m-n\geq n(n+2)-n =2n_*$. \\
Now let $\nu_1,\ldots, \nu_{n_*}$ be the vectors given by Proposition \ref{p:decomp},  define $A_k$, $B_k$ and $C_k$ as in \eqref{e:dvq+1}, let $\eta:=\eta_{q+1}$ be one of  the cutoff functions constructed in Lemma \ref{l:cutoff} and set 
\begin{align} 
M_i&:= \frac{2}{h_q^{\sfrac{1}{2}}\lambda_{q+1}}\text{sym}\left (\nabla \bar v_q^{\intercal}B_i\right )\label{d:Mi}\\
\Lambda_{ij}&:= \frac{2}{\lambda_{q+1}}\text{sym}\left (A_i^{\intercal}B_j\right )+ \frac{2}{\lambda_{q+1}^{2}}\text{sym}\left (B_i^{\intercal}(B_j+C_j\nabla\eta)\right )+\frac{2}{h_q^{\sfrac{1}{2}}\lambda_{q+1}^{2}}\text{sym}\left (B_i^{\intercal}C_j\nabla h_q^{\sfrac{1}{2}}\right ) \nonumber \\
&\,\,\,\,\,\,+ \frac{\delta_{ij}}{\lambda_{q+1}^{2}}\nabla \eta^{\intercal}\nabla \eta +\frac{2\delta_{ij}}{h_q^{\sfrac{1}{2}}\lambda_{q+1}^{2}}\text{sym}\left (\nabla \eta^{\intercal}\nabla h_q^{\sfrac{1}{2}}\right )+\frac{\delta_{ij}}{h_q\lambda_{q+1}^{2}}\nabla( h_q^{\sfrac{1}{2}})^{\intercal}\nabla h_q^{\sfrac{1}{2}}\,.\label{d:Lambdaij}
\end{align}

\begin{lemma} \label{l:ugliestlemma} For $a(b,c,\tilde u,\lambda,R,C_0)$ large enough there exists a constant $C>0$ (depending only on $\tilde u$ and $\Lambda$) such that for $k=0,1,2$ 
\begin{align}
&|D^{k}A_i| +| D^{k}C_i|\leq C\lambda_{q+1}^{k}\,\, \text{ on } \bar B_{1-R\delta_{q+2}}\,,\\ 
&|D^{k}B_i| \leq C\delta_{q+1}^{\sfrac{1}{2}}\ell^{-1}\lambda_{q+1}^{k}\,\, \text{ on } \bar B_{1-\frac{1}{2}R\delta_{q+1}} \,\, \text{ and } \,|D^{k}B_i| \leq C\lambda_{q+1}^{k} \,\,\text{ on }\, \bar B_{1-R\delta_{q+2}}\setminus B_{1-\frac{1}{2}R\delta_{q+1}}\,,\label{e:Bi}\\
&|D^{k}M_i| + |D^{k}\Lambda_{ij}|\leq C \ell^{-1}\lambda_{q+1}^{k-1} \,\, \text{ on } \bar B_{1-R\delta_{q+2}}\,.
\end{align}
\end{lemma}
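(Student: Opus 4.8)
The plan is to estimate each of $A_i$, $B_i$, $C_i$, $M_i$, $\Lambda_{ij}$ by repeated application of the Leibniz rule \eqref{e:product1}, of the chain rule \eqref{e:chain0} (applied to $t\mapsto t^{\pm 1/2}$ composed with $h_q$), of the bounds on the normals \eqref{DknormalsOmega}--\eqref{Dknormals}, on $\bar v_q$ in \eqref{e:barv_qk}, on $h_q$ in \eqref{pstage:h_q1}--\eqref{pstage:h_q2} and on the cutoff in \eqref{e:etaderivatives}, and then collapsing the resulting finite sums by means of the parameter hierarchy \eqref{e:parameters1}--\eqref{e:parameters2}. The guiding principle is that every derivative falling on an oscillatory factor $\sin(\lambda_{q+1}\nu_k\cdot x)$ or $\cos(\lambda_{q+1}\nu_k\cdot x)$ produces exactly one power of $\lambda_{q+1}$, while every derivative falling on one of the ``slow'' factors $\zeta^1_k,\zeta^2_k,\nabla\bar v_q,h_q^{\pm 1/2},\eta$ produces at worst a factor $\ell^{-1}$ on $\bar B_{1-\frac12 R\delta_{q+1}}$, respectively a bounded constant on the outer shell $\bar B_{1-R\delta_{q+2}}\setminus\bar B_{1-\frac12 R\delta_{q+1}}$ (where $\bar v_q=v_q=\tilde u$); since $\ell^{-1}\le\lambda_{q+1}$ and $\delta_{q+2}^{-1}\le\ell^{-1}$ by \eqref{e:parameters1}, both are dominated by $\lambda_{q+1}$, so in every case $|D^k(\text{product})|$ picks up at most $\lambda_{q+1}^k$ times the ``size'' of the product at order zero.

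First I would dispose of $A_i$ and $C_i$: each is an oscillatory factor times $\zeta^j_k$ (tensored with the fixed unit vector $\nu_k$ in the case of $A_i$), so Leibniz together with $|D^j\zeta^j_k|\le C(1+\delta_{q+1}^{1/2}\ell^{-j})$, $\ell^{-1}\le\lambda_{q+1}$ and $\delta_{q+1}^{1/2}\le 1$ gives $|D^kA_i|+|D^kC_i|\le C\lambda_{q+1}^k$ on $\bar B_{1-\frac12 R\delta_{q+1}}$, and the uniform bound \eqref{DknormalsOmega} makes the outer shell even easier. For $B_i$, which is an oscillatory factor times $\nabla\zeta^j_k$, the same computation applies with $\nabla\zeta^j_k$ in place of $\zeta^j_k$; on $\bar B_{1-\frac12 R\delta_{q+1}}$ I would use $|D^{j+1}\zeta^j_k|\le C\delta_{q+1}^{1/2}\ell^{-1-j}$ — which follows from \eqref{Dknormals} and the elementary identity $\delta_{q+1}^{1/2}\ell^{-1}=\tilde C\,\delta_q^{1/2}\lambda_q\ge 1$, a restatement of \eqref{e:parameters2} — factor out $\delta_{q+1}^{1/2}\ell^{-1}$, and conclude $|D^kB_i|\le C\delta_{q+1}^{1/2}\ell^{-1}\lambda_{q+1}^k$, while on the outer shell \eqref{DknormalsOmega} gives the claimed $|D^kB_i|\le C\lambda_{q+1}^k$. (For $k=2$ this uses $|D^3\zeta^j_k|\le C\delta_{q+1}^{1/2}\ell^{-3}$, obtained exactly as \eqref{Dknormals} once one observes that the computation in Lemma \ref{l:decompose} also yields $[\bar v_q]_4\le C(1+\delta_{q+1}^{1/2}\ell^{-3})$; since $\ell^{-1}\le\lambda_{q+1}$ this extra term never dominates.)

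For $M_i$ and $\Lambda_{ij}$ I would first record the auxiliary bounds on $h_q^{\pm 1/2}$ and $\nabla h_q^{1/2}=(2h_q^{1/2})^{-1}\nabla h_q$: combining \eqref{pstage:h_q1}, \eqref{pstage:h_q2} and $h_q\ge\lambda R\delta_{q+2}$ on $\bar B_{1-R\delta_{q+2}}$ with \eqref{e:chain0} applied to $t\mapsto t^{\pm 1/2}$ gives $|D^kh_q^{-1/2}|\le C\delta_{q+1}^{-1/2}\ell^{-k}$ on $\bar B_{1-\frac12 R\delta_{q+1}}$ and $|D^kh_q^{-1/2}|\le C\delta_{q+2}^{-1/2}\ell^{-k}$ on the outer shell (similarly for $\nabla h_q^{1/2}$). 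Then $M_i$ in \eqref{d:Mi} and each of the six groups of terms defining $\Lambda_{ij}$ in \eqref{d:Lambdaij} is a product of the already-estimated $A_i,B_i,C_i$, of $\nabla\bar v_q$, of $h_q^{\pm 1/2}$ and $\nabla h_q^{1/2}$, and of $\nabla\eta$, multiplied by a negative power of $\lambda_{q+1}$; the bound $|D^k(\cdot)|\le C\ell^{-1}\lambda_{q+1}^{k-1}$ then follows term by term. The point that requires care — and that I expect to be the bulk of the work, though not a genuine obstacle — is the book-keeping of the two regimes: on $\bar B_{1-\frac12 R\delta_{q+1}}$ the factor $\delta_{q+1}^{1/2}$ carried by $B_i$ must be matched against the $\delta_{q+1}^{-1/2}$ from $h_q^{-1/2}$ (or the $\delta_{q+1}^{-1}$ from $\nabla h_q/h_q$), while on the outer shell $B_i$ is merely bounded but $h_q$ may drop to $\lambda R\delta_{q+2}$, so one uses instead $\delta_{q+2}^{-1/2}\le\ell^{-1}$ and $\delta_{q+2}^{-1}\le\ell^{-1}\le\lambda_{q+1}$; in every case the remaining inequalities reduce to $\delta_{q+1}^{-1}\le\delta_{q+2}^{-1}\le\ell^{-1}\le\lambda_{q+1}$, i.e.\ to \eqref{e:parameters1}. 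Thus the estimate is ``soft'' once the hierarchy \eqref{e:parameters1}--\eqref{e:parameters2} is in place, and what remains is the (entirely routine, if lengthy) verification, term by term and region by region.
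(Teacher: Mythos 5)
Your proposal is correct and follows essentially the same route as the paper's proof: Leibniz and chain-rule estimates for $A_i$, $B_i$, $C_i$ and $h_q^{\pm\sfrac{1}{2}}$ carried out separately on $\bar B_{1-\frac{1}{2}R\delta_{q+1}}$ and on $\bar B_{1-R\delta_{q+2}}\setminus B_{1-\frac{1}{2}R\delta_{q+1}}$ (where $\bar v_q=\tilde u$ and, in the inner region, $\nabla \eta=0$), followed by absorbing all factors through the hierarchy \eqref{e:parameters1}--\eqref{e:parameters2} and the choice of $a$ large. Your observation that the $k=2$ estimate for $B_i$ needs $[\zeta^j_i]_3$, hence $[\bar v_q]_4\leq C(1+\delta_{q+1}^{\sfrac{1}{2}}\ell^{-3})$, correctly supplies a point the paper uses but leaves implicit in \eqref{Dknormals}.
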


\begin{proof}
Since the vectors $\nu_k$ are constant, the estimate for $A_i$ and $C_i$ is (up to a constant) the same: 
\[|D^{k}C_i| \leq C\left (\lambda_{q+1}^{k}+[\zeta_i^{j}]_k\right ) \leq C\left (\lambda_{q+1}^{k}+C(\tilde u)\delta_{q+1}^{\sfrac{1}{2}}\ell^{-k}\right ) \leq C\lambda_{q+1}^{k}\,,\]
if $a(\tilde u)$ is large enough, where we have used $\lambda_{q+1}\geq \ell^{-1}$. The estimate for $B_i$ follows from 
\[ |D^{k}B_i| \leq C(\lambda_{q+1}^{k}[\zeta_i^{j}]_1 +[\zeta_i^{j}]_{k+1}) \] 
using \eqref{DknormalsOmega} and \eqref{Dknormals} respectively. Since $h_q \geq R\lambda\delta_{q+2}\geq \delta_{q+2}$ on $\bar B_{1-R\delta_{q+2}}$ and $h_q \geq \Lambda^{-1}\delta_{q+1}$ on $\bar B_{1-\frac{1}{2}R\delta_{q+1}}$ we get, using \eqref{e:chain0}, 
\[ |D^{k+1}h_q^{\sfrac{1}{2}} | \leq C(\Lambda)C_0\delta_{q+1}^{-k}\left (\delta_{q+1}^{-\sfrac{1}{2}}+\delta_{q+1}^{k}\delta_{q+1}^{-\sfrac{1}{2}-k}\right ) \leq C(\Lambda)C_0\delta_{q+1}^{-\sfrac{1}{2}-k}\,\]
on $\bar B_{1-\frac{1}{2}R\delta_{q+1}}$, and 
\[ |D^{k+1}h_q^{\sfrac{1}{2}} | \leq C(\lambda,R)C_0\delta_{q+1}^{-k}\left (\delta_{q+2}^{-\sfrac{1}{2}}+\delta_{q+1}^{k}\delta_{q+2}^{-\sfrac{1}{2}-k}\right )   \leq C(\lambda,R)C_0\delta_{q+2}^{-\sfrac{1}{2}-k}\, \]
on $\bar B_{1-R\delta_{q+2}}\setminus B_{1-\frac{1}{2}R\delta_{q+1}}$. Now, combining \eqref{e:Bi} and the previous two estimates,

\begin{align*} |D^{k}M_i|& \leq \frac{C}{\lambda_{q+1}}\left ([h_q^{-\sfrac{1}{2}}]_k \|B_i\|_0+|h_q^{-\sfrac{1}{2}}|\left ([\bar v_q]_{k+1}\|B_i\|_0+|D^{k}B_i|\right )\right )\\ 
&\leq \frac{C(\tilde u,\Lambda)}{\lambda_{q+1}}\left ( C_0 \delta_{q+1}^{-\sfrac{1}{2}-k}\delta_{q+1}^{\sfrac{1}{2}}\ell^{-1}+ \delta_{q+1}^{-\sfrac{1}{2}}\left ( \delta_{q+1}^{\sfrac{1}{2}}\ell^{-1}(1+\delta_{q+1}^{\sfrac{1}{2}}\ell^{-k}) + \delta_{q+1}^{\sfrac{1}{2}}\ell^{-1}\lambda_{q+1}^{k}\right )\right )\\
&\leq \frac{C(\tilde u,\Lambda)}{\lambda_{q+1}\ell}\left (C_0\delta_{q+1}^{-k}+\delta_{q+1}^{\sfrac{1}{2}}\ell^{-k}+\lambda_{q+1}^{k}\right ) \leq C(\tilde u,\Lambda) \ell^{-1}\lambda_{q+1}^{k-1}\,
\end{align*}
on $\bar B_{1-\frac{1}{2}R\delta_{q+1}}$, where we used that $C_0\delta_{q+1}^{-k} \leq \lambda_{q+1}^{k}$ for $a(b,c,C_0)$ big enough. On the other hand, on $\bar B_{1-R\delta_{q+2}}\setminus B_{1-\frac{1}{2}R\delta_{q+1}}$ we have 
\[ |D^{k}M_i| \leq  \frac{C(\tilde u,g)}{\lambda_{q+1}}\left (C(\lambda,R)C_0\delta_{q+2}^{-\sfrac{1}{2}-k}+ \delta_{q+2}^{-\sfrac{1}{2}}\lambda_{q+1}^{k}\right ) \leq C(\tilde u,g) \delta_{q+2}^{-\sfrac{1}{2}} \lambda_{q+1}^{k-1}\,,\]
where again, $a(b,c,\lambda,R,C_0)$ is chosen so large that $C(\lambda,R)C_0\delta_{q+2}^{-k}\leq \lambda_{q+1}^{k}$. 
Similarly, on $\bar B_{1-\frac{1}{2}R\delta_{q+1}}$, we find 
\begin{align*}
|D^{k}\Lambda_{ij}| &\leq C\delta_{q+1}^{\sfrac{1}{2}}\ell^{-1}\lambda_{q+1}^{k-1}\\
&\quad+C\delta_{q+1}\ell^{-2}\lambda_{q+1}^{k-2}+ \frac{C(\tilde u,\Lambda,C_0)}{\lambda_{q+1}^{2}}\left ( \delta_{q+1}^{-\sfrac{1}{2}-k}\delta_{q+1}^{\sfrac{1}{2}}\ell^{-1} + \delta_{q+1}^{-\sfrac{1}{2}}\delta_{q+1}^{\sfrac{1}{2}}\ell^{-1}\left (\lambda_{q+1}^{k}\delta_{q+1}^{-\sfrac{1}{2}} + \delta_{q+1}^{-\sfrac{1}{2}-k}\right )\right )\\ 
&\quad +\frac{C(\tilde u,\Lambda,C_0)}{\lambda_{q+1}^{2}}\left (\delta_{q+1}^{-1-k}\delta_{q+1}^{-1}+\delta_{q+1}^{-1}\delta_{q+1}^{-1-k}\right ) \\ 
&\leq C\delta_{q+1}^{\sfrac{1}{2}}\ell^{-1}\lambda_{q+1}^{k-1}+\frac{C(\tilde u,\Lambda,C_0)}{\lambda_{q+1}^{2}}\left (\delta_{q+1}^{-k}\ell^{-1}+\delta_{q+1}^{-\sfrac{1}{2}}\ell^{-1}\lambda_{q+1}^{k}\right ) + C(\tilde u,\Lambda,C_0)\delta_{q+1}^{-2-k}\lambda_{q+1}^{-2}\\ 
&\leq C(\tilde u,\Lambda)\delta_{q+1}^{\sfrac{1}{2}}\ell^{-1}\lambda_{q+1}^{k-1}\,,
\end{align*}
where we used that $\nabla \eta = 0$ in this region and that $C(C_0)\delta_{q+1}^{-1}\leq C(C_0)\ell^{-1}\leq \lambda_{q+1}$ for $a(b,c,C_0)$ large enough.
Lastly, we check the region $\bar B_{1-R\delta_{q+2}}\setminus B_{1-\frac{1}{2}R\delta_{q+1}}$:

\begin{align*}
|D^{k}\Lambda_{ij}| &\leq C(\tilde u)\lambda_{q+1}^{k-1} + \frac{C(\tilde u)}{\lambda_{q+1}^{2}}\left (\lambda_{q+1}^{k}\delta_{q+2}^{-1}+\delta_{q+2}^{-k-1}\right ) +\frac{C(\lambda,R,C_0)}{\lambda_{q+1}^{2}}\left (\delta_{q+2}^{-1-k}+\delta_{q+2}^{-\sfrac{1}{2}}\left (\delta_{q+2}^{-\sfrac{1}{2}}\lambda_{q+1}^{k}+\delta_{q+2}^{-\sfrac{1}{2}-k}\right )\right )\\ 
&\quad + C\delta_{q+2}^{-k-2}\lambda_{q+1}^{-2}+\frac{C(\lambda,R,C_0)}{\lambda_{q+1}^{2}}\left (\delta_{q+2}^{-\sfrac{1}{2}-k}\delta_{q+2}^{-\sfrac{3}{2}}+\delta_{q+2}^{-\sfrac{1}{2}}\delta_{q+2}^{-k-\sfrac{3}{2}}\right ) \\ &\quad  + \frac{C(\lambda,R,C_0)}{\lambda_{q+1}^{2}}\left (\delta_{	q+2}^{-k-1}\delta_{q+2}^{-1}+\delta_{q+2}^{-1}\delta_{q+2}^{-1-k}\right )\\
&\leq C(\tilde u) \lambda_{q+1}^{k-1} + C(\lambda,R,C_0)\delta_{q+2}^{-2-k}\lambda_{q+2}^{-2} \leq C(\tilde u)\ell^{-1}\lambda_{q+1}^{k-1}\,,
 \end{align*}
where we used $C(\lambda,R,C_0)\delta_{q+2}^{-1}\leq C(\lambda,R,C_0)\ell^{-1}\leq \lambda_{q+1}$. 
\end{proof}

Hence, if $a$ is chosen large enough, we have 
\[ \|\tau-e\|_0+\sum_{i}\|M_i\|_0+ \sum_{i,j} \|\Lambda_{ij}\|_0  < r_0\,,\]
where the norms are intended on $\bar B_{1-R\delta_{q+2}}$.  Proposition \ref{p:decomp} thus yields smooth functions $c_1,\ldots,c_{n_*}: \bar B_{1-R\delta_{q+2}}\to \R$, such that 
\begin{equation}\label{e:decomposition}
\tau = \sum_i c_i^{2}\nu_i\otimes \nu_i+ \sum_i c_i M_i + \sum_{i,j} c_ic_j\Lambda_{ij}\,,
\end{equation} 
$c_i>r_0$ on $\bar B_{1-R\delta_{q+2}}$ and for $k=0,1,2$ 
\begin{equation} \label{e:coefficients}
\|c_i\|_k \leq C(\tilde u,\Lambda)\left (1+\ell^{-k}+\ell^{-1}\lambda_{q+1}^{k-1}\right )\leq C(\tilde u,\Lambda)\left ( 1+\ell^{-1}\lambda_{q+1}^{k-1}\right )\,.
\end{equation}
\section{Proof of Proposition \ref{p:stage}: Perturbation}
Finally, we pick $\eta:= \eta_{q+1}$ from Lemma \ref{l:cutoff}, set $a_k := \eta h_q^{\sfrac{1}{2}}c_k$ and define  $v_{q+1}$ as in \eqref{d:vq+1}. Observe that, although $c_k$ is only defined in $\bar B_{1-R\delta_{q+2}}$, $a_k$ is smooth. Also, $v_{q+1}=\bar v_q= \tilde u$ on $\bar B_1\setminus B_{1-R\delta_{q+2}}$. Then, by \eqref{e:newinducedmetric} we find 
\begin{align*} 
\nabla v_{q+1}^{\intercal}\nabla v_{q+1} = \nabla  \bar v_q^{\intercal} \nabla  \bar v_q &+ \eta^{2} h_q \sum_{k=1}^{n_*}c_k^{2}\nu_k\otimes \nu_k +2\eta h_q \sum_{k=1}^{n_*}\frac{c_k}{h_q^{\sfrac{1}{2}}\lambda_{q+1}}\text{sym}\left (\nabla \bar v_q^{\intercal}B_k\right )\\
&+ 2\eta^{2}h_q\sum_{i,j=1}^{n_*}\frac{c_ic_j}{\lambda_{q+1}}\text{sym}\left (A_i^{\intercal}B_j\right )\\
& + 2\eta^{2}h_q \sum_{i,j=1}^{n_*}\frac{c_ic_j}{\lambda_{q+1}^{2}}\text{sym}\left (B_i^{\intercal}B_j\right ) + 2\eta h_q\sum_{i,j=1}^{n_*}\frac{c_ic_j}{\lambda_{q+1}^{2}}\text{sym}\left (B_i^{\intercal}C_j\nabla \eta\right ) \\
&+ 2\eta^{2}h_q \sum_{i,j=1}^{n_*}\frac{c_ic_j}{h_q^{\sfrac{1}{2}}\lambda_{q+1}} \text{sym}\left (B_i^{\intercal}C_j\nabla h_q^{\sfrac{1}{2}}\right ) +h_q\sum_{k=1}^{n_*}\frac{c_k^{2}}{\lambda_{q+1}^{2}} \nabla \eta^{\intercal}\nabla \eta \\
& + \eta^{2}h_q \sum_{k=1}^{n_*}\frac{c_k^{2}}{h_q\lambda_{q+1}^{2}}\left (\nabla h_q^{\sfrac{1}{2}}\right )^{\intercal}\nabla h_q^{\sfrac{1}{2}}\\
& +2\eta h_q \sum_{k=1}^{n_*}\frac{c_k^{2}}{h_q^{\sfrac{1}{2}}\lambda_{q+1}^{2}} \text{sym}\left (\nabla \eta^{\intercal}\nabla h_q^{\sfrac{1}{2}}\right ) +E_1\,,
\end{align*} 
where we have set 
\begin{align*}
E_1 &:= 2\eta^{2}h_q \sum_{i,j=1}^{n_*}\frac{c_i}{\lambda_{q+1}^{2}}\text{sym}\left (B_i^{\intercal}C_j\nabla  c_j\right ) +2\eta h_q^{\sfrac{1}{2}}\sum_{k=1}^{n_*}\frac{c_i}{\lambda_{q+1}^{2}}\text{sym}\left (\nabla\left (\eta h_q^{\sfrac{1}{2}}\right )^{\intercal}\nabla c_i\right )\\
&\quad+ \eta^{2}h_q\sum_{k=1}^{n_*} \frac{1}{\lambda_{q+1}^{2}}\nabla c_k^{\intercal}\nabla c_k\,.
\end{align*}
Hence we can write 
\[\nabla v_{q+1}^{\intercal}\nabla v_{q+1} = \nabla  \bar v_q^{\intercal} \nabla  \bar v_q +\eta^{2}h_q\left (\sum_{k=1}^{n_*} c_k\nu_k\otimes \nu_k  + \sum_{k=1}^{n_*} c_k M_k + \sum_{i,j=1}^{n_*} c_ic_j\Lambda_{ij} \right )+E_1+E_2\,,\] 
with  
\begin{align*} E_2:= &\,\eta(1-\eta) h_q \left (\sum_{k=1}^{n_*}c_kM_k+2\sum_{k=1}^{n_*}\frac{c_k^{2}}{h_q^{\sfrac{1}{2}}\lambda_{q+1}^{2}}\text{sym}\left (\nabla\eta^{\intercal}\nabla h_q^{\sfrac{1}{2}}\right )+2\sum_{i,j=1}^{n_*}\frac{c_i c_j}{\lambda_{q+1}^{2}}\text{sym}\left (B_i^{\intercal}C_j\nabla \eta\right )\right ) \\
 &+(1-\eta^{2})h_q\sum_{k=1}^{n_*}\frac{c_k^{2}}{\lambda_{q+1}^{2}}\nabla \eta^{\intercal}\nabla \eta\,.\end{align*}
Recalling \eqref{e:decomposition} and the definition of $\tau$ in \eqref{d:tau}, we can  see that 
\[ v_{q+1}^{\sharp}e = \bar v_q^{\sharp}e + \eta^{2}\left (\tilde g-\bar v_q^{\sharp}e -\delta_{q+2} e\right ) +E_1+E_2\,,\] 
and consequently 
\begin{align*} \tilde g-v_{q+1}^{\sharp}e &= \tilde g- \bar v_q^{\sharp }e -\eta^{2}(\tilde g-\bar v_q^{\sharp }e-\delta_{q+2}e)-E_1-E_2 = (1-\eta^{2})(\tilde g-\bar v_q^{\sharp}e)+\eta^{2}\delta_{q+2}e-E_1-E_2 \\ & = (1-\eta^{2})(\tilde g- v_q^{\sharp}e)+\eta^{2}\delta_{q+2}e-E_1-E_2\,,\end{align*}
where we used that $\bar v_q = v_q$ whenever $1-\eta^{2}>0$. We now define 
\begin{equation}\label{d:hq+1}
h_{q+1} := \frac{1-\sigma_0^{2}(1+\eta)}{1-\sigma_0^{2}(1+\eta)^{2}}(1-\eta^{2})h_q + \frac{\eta^{2}}{1-\sigma_0^{2}(1+\eta)^{2}}\delta_{q+2}\,.
\end{equation}
We have $h_{q+1}=h_q$ on $\bar B_1\setminus B_{1-R\delta_{q+2}}$ granting linearity and $|h_{q+1}'(1)|= \lambda$. Since $\sigma_0<\frac{1}{2}$ we find that on $\bar B_{1-R\delta_{q+2}}\setminus B_{1-(R+1)\delta_{q+2}}$ we have
\[ h_{q+1} \geq \frac{1}{2}(1-\eta^{2})h_q + \eta^{2}\delta_{q+2}\geq \frac{1}{2}\lambda R\delta_{q+2}(1-\eta^{2}) + \eta^{2}\delta_{q+2}= \frac{1}{2}\lambda R\delta_{q+2}+\eta^{2} \delta_{q+2}(1-\frac{1}{2}\lambda R) =:f(|x|)\,.\] 
The function $f$ is monotonically increasing since $\lambda R> 2$. Hence $h_{q+1}\geq f\geq f(0) =\delta_{q+2}$. This bound holds obviously also on $\bar B_{1-(R+1)\delta_{q+2}}$. Moreover, a rough estimate gives
\[ h_{q+1} \leq (1-\eta^{2})h_q + \frac{1}{1-4\sigma_0^{2}}\delta_{q+2} \leq (R+1)\lambda\delta_{q+2} +2\delta_{q+2}\leq 2(R+1)\lambda\delta_{q+2} \leq \Lambda \delta_{q+2}\,\]
provided $\sigma_0$ is small enough and $\Lambda(R)$ big enough, which settles \eqref{pstage:h_q1}. To show \eqref{pstage:h_q2} we define 
\[ \Phi(x) = \frac{1-\sigma_0^{2}(1+x)}{1-\sigma_0^{2}(1+x)^{2}}(1-x^{2})\,, \Psi(x)= \frac{x^{2}}{1-\sigma_0^{2}(1+x)^{2}}\,,\] 
and write 
\[ h_{q+1} = \Phi(\eta) h_q + \Psi(\eta)\delta_{q+2}\,.\]
Since $\sigma_0<\frac{1}{2}$ one finds constants $C_k$ such that 
\[ [\Phi]_k  +[\Psi]_k\leq C_k\,, k\in \N\,.\]
Then \eqref{pstage:h_q2} is a consequence of Proposition \ref{p:chain} and estimates \eqref{e:etaderivatives}. 
\section{Proof of Proposition \ref{p:stage}: Conclusion} \label{s:conclusion}
\subsection{Error estimation}
Lastly, we need to check if, once $a$ is chosen large enough, \eqref{pstage:shortness} is satisfied with $q$ replaced by $q+1$.	First of all, we show that the upper bound is true by using \eqref{pstage:shortness} to write
\begin{align*}
\tilde g-v_{q+1}^{\sharp}e &\leq (1-\eta^{2})(1+\sigma_0)h_qe + \eta^{2}\delta_{q+2} e -E_1-E_2 \\& = (1+\sigma_0(1+\eta))h_{q+1}e\\
&\qquad + \underbrace{(1-\eta^{2})(1+\sigma_0)h_qe + \eta^{2}\delta_{q+2} e -E_1-E_2 - (1+\sigma_0(1+\eta))h_{q+1}e}_{=:E}\,.
\end{align*} 
Hence, the task is to show that $E\leq 0$. First of all, on $\bar B_1\setminus B_{1-R\delta_{q+2}}$ we have $\eta \equiv 0$ and $h_{q+1} = h_q$ resulting in $E=0$. On $\bar B_{1-R\delta_{q+2}}$ we compute 
\begin{align*} 
 E &= (1-\eta^{2})(1+\sigma_0)h_qe + \eta^{2}\delta_{q+2} e -E_1-E_2\\
&\qquad\qquad -\left (\frac{1-\sigma_0^{2}(1+\eta)}{1-\sigma_0(1+\eta)}(1-\eta^{2})h_qe +\frac{\eta^{2}}{1-\sigma_0(1+\eta)} \delta_{q+2} e\right ) \\ 
 & =\left (1+\sigma_0-\frac{1-\sigma_0^{2}(1+\eta)}{1-\sigma_0(1+\eta)}\right )(1-\eta^{2})h_q e+\left (1-\frac{1}{1-\sigma_0(1+\eta)}\right )\eta^{2}\delta_{q+2} e -E_1-E_2\\
 &= \frac{-\sigma_0\eta}{1-\sigma_0(1+\eta)}(1-\eta^{2})h_q e -\frac{\sigma_0(1+\eta)\eta^{2}}{1-\sigma_0(1+\eta)}\delta_{q+2} e -E_1-E_2\,.
\end{align*}
Since $h_q\geq \lambda R\delta_{q+2}$ when $1-\eta^{2}>0$ we can conclude that 
\[  \frac{-\sigma_0(1-\eta^{2})}{2(1-\sigma_0(1+\eta))}h_q e -\frac{\sigma_0(1+\eta)\eta}{1-\sigma_0(1+\eta)}\delta_{q+2} e \leq -C(\sigma_0,\lambda,R)\delta_{q+2} e\,, \] 
for some $C(\sigma_0,\lambda,R)> 0$. Using the estimates of Lemma \ref{l:ugliestlemma} and \eqref{e:coefficients} we find the pointwise estimate
\[ |E_1|\leq C(\lambda,\Lambda,R)\frac{\delta_{q+1}}{\lambda_{q+1}^{2}\ell^{2}} \eta\,.\]
For $a$ large enough it therefore follows from \eqref{e:errorsize} that
\begin{align*} E &\leq \eta\left (C(\lambda,\Lambda, R) \frac{\delta_{q+1}}{\lambda_{q+1}^{2}\ell^{2}}e - C(\sigma_0,\lambda,R) \delta_{q+2} e\right ) -\frac{\sigma_0\eta(1-\eta^{2})}{2(1-\sigma_0(1+\eta))}h_q e -E_2 \\ 
&\leq-\frac{\sigma_0\eta(1-\eta^{2})}{2(1-\sigma_0(1+\eta))}h_q e -E_2\,.\end{align*}
To estimate this final term we recall from \eqref{e:eta'smallness} that there exists $\varepsilon>0$ such that $|\nabla \eta^{\intercal}\nabla \eta| \leq C\delta_{q+2}^{-2}\eta$ whenever $\eta\leq \varepsilon$. Consequently, when $\eta\leq \varepsilon$ we can estimate 
\[ |E_2| \leq C(\lambda,R)\eta(1-\eta)h_q\left(\ell^{-1}\lambda_{q+1}^{-1}+\delta_{q+2}^{-2}\lambda_{q+1}^{-2} \right ) \leq C(\lambda,R)\eta(1-\eta)\frac{h_q}{\ell^{2}\lambda_{q+1}^{2}} \,,\]
so that 
\[ E\leq  \eta(1-\eta)h_q\left ( \frac{C(\lambda,R)}{\ell^{2}\lambda_{q+1}^{2}}e -\frac{\sigma_0}{2}e\right ) \leq 0\,,\]
if $a(\sigma_0,\lambda,R)$ is large enough. On the other hand, when $\eta\geq \varepsilon$, then 
\begin{align*} E &\leq \eta(1-\eta)h_q\left (\frac{C(\lambda,R)}{\ell^{2}\lambda_{q+1}^{2}}e-\frac{\sigma_0}{4}e\right ) +(1-\eta^{2})h_q\left (\sum_{k=1}^{n_*}\frac{c_k^{2}}{\lambda_{q+1}^{2}}|\nabla \eta^{\intercal}\nabla\eta| e -\frac{\sigma_0\eta}{4(1-\sigma_0(1+\eta))}e\right )\\
& \leq C(1-\eta^{2})h_q\left ( \delta_{q+2}^{-2}\lambda_{q+1}^{-2}e - \frac{\sigma_0\varepsilon}{4}e\right ) \leq 0\,,
\end{align*}
if $a( \sigma_0,\varepsilon)$ is large enough. Recall in particular that $\varepsilon$ does not depend on $q$, hence we can choose $a$ depending on $\varepsilon$. This proves the upper bound in \eqref{pstage:shortness}. The lower bound is proven analoguously. 
\subsection{Estimates on $v_{q+1}$}  First of all, on $\bar B_1\setminus B_{1-R\delta_{q+2}}$ we have $v_{q+1} =\tilde u = v_q$. On the other hand, on $\bar B_{1-R\delta_{q+2}}$ we can estimate, for $k=0,1,2$,
\[ [\bar v_q - v_q]_k \leq C\ell^{2-k}[v_q]_2 + C\ell^{2-k}[\tilde u]_2 \leq \delta_{q+1}^{\sfrac{1}{2}}\ell^{1-k}\,,\] 
if $\tilde C$ in the definition \eqref{d:l} of $\ell$ is large enough. Moreover, combining the estimates of Lemma \ref{l:ugliestlemma} with estimates \eqref{e:etaderivatives}, \eqref{Dknormals} and \eqref{e:coefficients} we can estimate 
\begin{align*} [v_{q+1}-\bar v_q]_k &\leq \frac{C}{\lambda_{q+1}}\left ([\eta h_q^{\sfrac{1}{2}}c_i]_k+C(\tilde u,\Lambda)\delta_{q+1}^{\sfrac{1}{2}}\lambda_{q+1}^{k}\right ) \\ 
&\leq \frac{C(\tilde u,\Lambda)\delta_{q+1}^{\sfrac{1}{2}}}{\lambda_{q+1}}\left (\delta_{q+2}^{-k}+C_0\delta_{q+2}^{-k}+\ell^{-1}\lambda_{q+1}^{k-1}+\lambda_{q+1}^{k}\right ) \leq C(\tilde u,\Lambda) \delta_{q+1}^{\sfrac{1}{2}}\lambda_{q+1}^{k-1}  \\ 
&\leq C_0\delta_{q+1}^{\sfrac{1}{2}}\lambda_{q+1}^{k-1} \,.\end{align*}
This concludes the proof of the proposition. 

\section{Proof of Theorem \ref{t:main}}
\subsection{First approximation} Let $\sigma_0>0$ from Proposition \ref{p:stage} be given and assume that $\bar \sigma_0 <\min\{\frac{1}{2}\sigma_0,\frac{1}{4}\}$. Assume $g, u$ satisfy \eqref{tmain:ass1} and \eqref{tmain:ass2} and fix an $\alpha <\frac{1}{2}$ and a constant $x_0\in \R^{n(n+1)}$. We choose $c>b>1$ such that $\alpha <\frac{1}{2bc}$. For any $a$ big enough we now want to construct maps $v_0,h_0$ satisfying the assumptions \eqref{pstage:v_q}-\eqref{pstage:shortness} for the metric $\tilde g=g-w^{\sharp}e$, where $w\in C^{\infty}\left (\bar B_1,\R^{n(n+1)}\right )$ is a suitable map constructed in \eqref{e:w}. Then Proposition \ref{p:stage} can be applied iteratively to generate a sequence $v_q \in C^{\infty}\left (\bar B_1, \R^{m}\right )$ converging in $C^{1,\alpha}$ to a map $\underline v$ inducing the metric $\tilde g$. Setting $v=(\underline v, w)$ will then yield the wanted isometric map. First of all,  we need to do a first approximation to get into the range  of assumption \eqref{pstage:shortness}.
\begin{lemma}\label{l:firstapprox} 
Let $m\geq n+2$, $\tilde \sigma_0\in]0,\frac{1}{4}[$ and assume $u\in C^{\infty}(\bar B_1, \R^{m})$ and $h\in C^{\infty}(\bar B_1)$ satisfy \eqref{tmain:ass1}--\eqref{tmain:ass2} with $\bar \sigma_0$ replaced by $\tilde \sigma_0$. There exist $\bar\delta >0$ and $\bar \Lambda>1$ (depending only on $\tilde \sigma_0$ and $h$) such that for any positive $\delta < \bar \delta $ there exist $\tilde u\in C^{\infty}(\bar B_1, \R^{m})$, $\tilde h\in C^{\infty}(\bar B_1)$ with
\begin{align}
 (1-&\tilde \sigma_0(2+\eta))\tilde h e \leq g-\tilde u^{\sharp}e\leq (1+\tilde \sigma_0(2+\eta)) \tilde h e\,,\label{l:difference}\\ 
 &\tilde u  = u \,\text{ on } \bar B_1 \setminus B_{1-\delta} \,,\\
 &\tilde h (1) = 0 \,\text{ and } \,\tilde h  \, \text{ is linear on } \bar B_1 \setminus B_{1-\delta}\,,\label{l:h1}\\
 &\bar\Lambda^{-1} \delta \leq \tilde h \leq \bar \Lambda \delta \,\text{ on } \bar B_{1-\delta}\,, \label{l:h2}\\
 & \| D^{k}\tilde h\|_{C^{0}(\bar B_1)} \leq C\delta^{1-k} \,\text{ for } k=0,1,2,3\,,\label{l:h3}
\end{align} 
where $\eta$ is a suitable, radially symmetric, smooth cutoff function with $\eta \equiv 1 $ on $\bar B_{1-2\delta}$ and $\eta \equiv 0$ on $\bar B_1\setminus B_{1-\delta}$ and the constant $C$ in \eqref{l:h3} depends only on $|h'(1)|$. In addition, $\tilde u $ can be chosen to be arbitrarily close to $u$ in $C^{0}$.
\end{lemma}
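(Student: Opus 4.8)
The strategy is to perform one "primitive" Nash-type stage that does not aim to reduce the metric error drastically but only to massage it into the required band \eqref{l:difference} while creating the new linear error function $\tilde h$ with the prescribed asymptotics at the boundary. First I would fix the cutoff $\eta$ depending on $\delta$: choose $\eta \equiv 1$ on $\bar B_{1-2\delta}$, $\eta\equiv 0$ outside $B_{1-\delta}$, with $[\eta]_k\le C\delta^{-k}$ (a rescaling of a fixed profile, as in Lemma \ref{l:cutoff}). Then I would \emph{define} $\tilde h$ first, before touching $u$: set $\tilde h := h$ on $\bar B_1\setminus B_{1-\delta}$ (so that \eqref{l:h1} and the linearity near the boundary, together with $\tilde h(1)=0$, hold automatically from \eqref{tmain:ass1}), and interpolate smoothly in $\bar B_{1-\delta}$ down to a constant of size comparable to $\delta$ on $\bar B_{1-2\delta}$. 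Since $h\equiv h(|x|)>0$ on $\mathring B_1$ with $h(1)=0$, $h'(1)\ne 0$, near $\partial B_1$ we have $h(r)\approx |h'(1)|(1-r)$, so on $\bar B_1\setminus B_{1-\delta}$ already $\tilde h = h \asymp \delta$; extending this by a monotone smooth interpolation yields $\bar\Lambda^{-1}\delta\le \tilde h\le\bar\Lambda\delta$ on $\bar B_{1-\delta}$, i.e. \eqref{l:h2}, and the bounds $\|D^k\tilde h\|_0\le C\delta^{1-k}$, $k=0,1,2,3$, i.e. \eqref{l:h3}, with $C$ depending only on $|h'(1)|$. This fixes $\bar\delta$ (small enough that $h$ is monotone and $h\asymp|h'(1)|(1-r)$ on $[1-\bar\delta,1]$) and $\bar\Lambda$.

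Next I would construct $\tilde u$. On $\bar B_1\setminus B_{1-\delta}$ set $\tilde u := u$, so that $g-\tilde u^\sharp e = g-u^\sharp e$ and \eqref{l:difference} there is exactly the hypothesis \eqref{tmain:ass2} (with $\bar\sigma_0$ replaced by $\tilde\sigma_0$ and $\eta\equiv 0$). On $\bar B_{1-\delta}$ the point is that $g-u^\sharp e$ need not be within a small multiplicative band of $\tilde h e$ — it is merely some $C^2$ symmetric form, positive definite since $u$ is strictly short — so I add to $u$ a few extra components (recall $m\ge n+2$, so there is room, using the primitive Nash perturbation in codimension $\ge 2$ together with a convex-combination / decomposition of $g-u^\sharp e - \mu\tilde h\, e$ as a sum of squares of linear forms for a suitable small constant $\mu$) whose squared amplitudes reconstruct exactly $g-u^\sharp e - \tilde h\, e$ up to a multiplicative error controlled by $\tilde\sigma_0$. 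Concretely, write $g-u^\sharp e - \tilde h e = \sum_k b_k^2\, \ell_k\otimes\ell_k$ on $\bar B_{1-2\delta}$ (possible since the left side is a smooth positive-definite form once $\bar\delta$ is small, using a fixed finite family $\ell_k$ as in Proposition \ref{p:decomp}), multiply the $b_k$ by $\eta$ to keep the boundary value, and perturb $u$ by high-frequency oscillations $\frac{b_k\eta}{N}(\sin(N\ell_k\cdot x)\,\zeta_k^1+\cos(N\ell_k\cdot x)\,\zeta_k^2)$ along orthonormal normals; for $N$ large this produces $\tilde u$ with $\tilde u^\sharp e = u^\sharp e + \eta^2(g-u^\sharp e - \tilde h e) + E$, $\|E\|_0 = O(N^{-1})$, hence $g-\tilde u^\sharp e = (1-\eta^2)(g-u^\sharp e) + \eta^2\tilde h e - E$. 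On the region $\eta\equiv 1$ this is $\tilde h e - E$, well inside $(1\pm\tilde\sigma_0(2+\eta))\tilde h e = (1\pm 3\tilde\sigma_0)\tilde h e$ once $N$ is large (since $\tilde h\asymp\delta\gg N^{-1}$); on the transition region $0<\eta<1$ one has $h_q\asymp\delta$ there as well and $(1-\eta^2)(g-u^\sharp e)$ is a convex-type combination of $\tilde h e$-comparable and $\tilde\sigma_0$-banded pieces, so $g-\tilde u^\sharp e$ stays in the band $(1\pm\tilde\sigma_0(2+\eta))\tilde h e$. Making the perturbation amplitude small also gives $\|\tilde u-u\|_0$ arbitrarily small.

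The main obstacle I anticipate is the bookkeeping on the transition annulus $B_{1-\delta}\setminus B_{1-2\delta}$: there both the cutoff $\eta$ and the decomposition coefficients $b_k$ are $\delta$-dependent, and one must check that the error terms coming from derivatives of $\eta$ and of the normals — which carry factors $\delta^{-1}$ from $[\eta]_1$ and from $\nabla\zeta_k$ — are still beaten by the freedom to take $N=N(\delta)$ large, so that the multiplicative error against $\tilde h e\asymp\delta e$ is $\le\tilde\sigma_0$. This is the same type of estimate as in Lemma \ref{l:ugliestlemma} and Section \ref{s:conclusion}, only in a single stage and with $\delta$ fixed, so choosing $N$ after $\delta$ resolves it; but it is where the argument has to be carried out carefully, in particular to get the widened band $2+\eta$ rather than $1+\eta$ — the extra slack is exactly what absorbs the transition-region loss. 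A secondary point is verifying \eqref{l:h3} with the constant depending only on $|h'(1)|$: this forces the interpolation defining $\tilde h$ on $\bar B_{1-\delta}$ to be done by rescaling a single fixed profile and matching to the linear germ $h'(1)(1-r)$ of $h$ at $r=1$, so that all $\delta$-dependence is explicit.
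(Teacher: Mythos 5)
Your overall skeleton is the paper's: keep $u$ near $\partial B_1$, decompose the metric deficit minus a small positive part into primitive metrics, perform a single Nash twist with amplitudes cut off by $\eta$ using two orthonormal normals (here $m\ge n+2$ enters), and let $\tilde h$ interpolate between a boundary profile and a constant of size $\sim\delta$. But two steps fail as written. First, \eqref{tmain:ass1} does \emph{not} say that $h$ is linear near $\partial B_1$ (it only gives $h(1)=0$, $h'(1)\neq 0$), so setting $\tilde h:=h$ on $\bar B_1\setminus B_{1-\delta}$ does not yield \eqref{l:h1}; and if you repair this by taking the linear germ $h'(1)(|x|-1)$ there (as you later suggest when discussing \eqref{l:h3}), then on that outer annulus \eqref{l:difference} is no longer ``exactly the hypothesis \eqref{tmain:ass2}'': you must absorb the error of replacing $h$ by its linearization, and that is precisely what the widened factor $2+\eta$ (i.e.\ $2\tilde\sigma_0$ where $\eta=0$) is for — this is the paper's inequality \eqref{linearshortness}, obtained from \eqref{tmain:ass2} plus Taylor expansion of $h$ at $|x|=1$ for $\delta<\bar\delta$ small. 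As written, your construction is internally inconsistent on this point, and you misattribute the role of the extra slack to the transition region.

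Second, on the transition annulus the issue is not one that large frequency $N$ can fix. With the oscillation error $E$ removed one still has $g-\tilde u^{\sharp}e=(1-\eta^{2})(g-u^{\sharp}e)+\eta^{2}\,(\text{interior part})$, and knowing only that $g-u^{\sharp}e$ lies in the $(1\pm 2\tilde\sigma_0)$-band of the linear profile $L=h'(1)(|x|-1)$, the two-sided band \eqref{l:difference} forces pointwise inequalities of the form $\frac{(1+2\tilde\sigma_0)(1-\eta^{2})}{1+\tilde\sigma_0(2+\eta)-\eta^{2}}L\le\tilde h$ (and a reverse one), which pin $\tilde h$ to a specific rational combination of $\eta$, $L$ and the interior constant; this is why the paper \emph{defines} $\tilde h=\Phi(\eta)h'(1)(|x|-1)+\Psi(\eta)\rho$ with explicit $\Phi,\Psi$, rather than choosing ``some monotone interpolation of size $\asymp\delta$'' first and verifying afterwards — a generic interpolation violates the band no matter how large $N$ is. Two further points you gloss over: the sum-of-squares decomposition must be valid (uniformly positive definite) on all of $\bar B_{1-\delta}$, the support of $\eta b_k$, not only on $\bar B_{1-2\delta}$, which forces the interior constant (the paper's $\rho/2$) to be chosen small compared with $(1-2\tilde\sigma_0)|h'(1)|\,\delta$; and where $\eta$ is tiny the slack in the band degenerates like $\eta$ while the error carries $|\nabla\eta|^{2}=O(\delta^{-2})$, so one needs a cutoff with $(\eta')^{2}\le\eta$ near its vanishing set (the paper's property \eqref{growth}, cf.\ \eqref{e:eta'smallness}) — largeness of $N=N(\delta)$ alone does not beat this term.
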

We postpone the proof of this lemma until the end of this section and now show how to conclude the Theorem \ref{t:main} from it. Firstly, choose $\tilde \sigma_0 = \bar \sigma_0$ and fix some $\delta<\bar \delta$ to find first approximations  $\tilde u, \tilde h$ satisfying \eqref{l:difference}--\eqref{l:h3}. We then set $\lambda:= |\tilde h'(1)|$,  choose some $a> a_0(b,c,\tilde u, \sigma_0,\lambda,R,\Lambda,\delta)$ big enough to satisfy $(R+1)\delta_1<\delta$, where we recall $\delta_q = a^{-b^{q}}$. To start the iterative process we now would like to find maps $v_0,h_0$ satisfying \eqref{pstage:v_q}--\eqref{pstage:shortness}. In particular, $v_0$ will have to satisfy $\| v_0-\tilde u\|_1 <\rho_0(\tilde u)$ in order to find the normal vectorfields with the help of Proposition \ref{p:normals}. A perturbation like the one used in the proof of  Proposition \ref{p:stage} would produce a map $v_0$ satisfying most of the needed conditions, however we could only  control $\| v_0-\tilde u\|_1 \leq C\delta ^{\sfrac{1}{2}}$. Since $C\delta^{\sfrac{1}{2}}$ might be bigger than $\rho_0(\tilde u)$ such a perturbation is not sufficient. The solution, which unfortunately comes at the expense of increasing the codimension, is to perturb the metric instead: we set $v_0=\tilde u$ and find a metric $\tilde g$ of the form $\tilde g= g-w^{\sharp}e$ such that $\tilde g - \tilde u^{\sharp } e$ is very small. It is then not difficult to find $h_0$ such that $v_0,h_0$ and $\tilde g$ satisfy \eqref{pstage:w_q}--\eqref{pstage:shortness}.
To construct the map $w$ we define
\[ \tau = \frac{g-\tilde u^{\sharp}e}{\tilde h } -\frac{\delta_1}{\tilde h}e \,.\]
If $R$ is big and $\tilde \sigma_0$ is small enough we can decompose $\tau$ on $\bar B_{1-R\delta_1}$, since 
\[|\tau -e| \leq C\tilde \sigma_0 + \frac{C}{R\lambda } < r_0\,. \]
Here, we assumed that $a(\bar \Lambda)$ is taken large enough to guarantee $\bar \Lambda ^{-1}\delta \geq \lambda R\delta_1$. We can then also compute 
\[ |D^{k}\tau| \leq C(g,\tilde u)\delta_1^{-k}\,,\]
 for $k=1,2,3$. Hence, by Proposition \ref{p:decomp} we find $\nu_1,\ldots, \nu_{n*}\in \S^{n-1}$ and $c_1,\ldots, c_{n_*}\in C^{\infty}\left (\bar B_{1-R\delta_1}\right )$ with 
 \[  \tau = \sum c_i^{2}\nu_i\otimes \nu_i \,,\] 
 and, for $k=0,1,2,3$,
 \[ |D^{k}c_i| \leq C|D^{k}\tau| \leq C(g,\tilde u)\delta_1^{-k}\,\] 
 as well as the improved estimates, for $k=1,2,3$,
 \[ |\tilde h^{\sfrac{1}{2}} D^{k}c_i| \leq C(g,\tilde u)\delta_1^{\sfrac{1}{2}-k}\,.\]

 \subsection{Perturbation} 
 Fix a cutoff $\eta_0$ given by Lemma \ref{l:cutoff}, pick a constant $x_0\in \R^{n(n+1)}$ and define 
 \begin{equation}\label{e:w} w = x_0 +\sum_{k=1}^{n_*} \frac{\eta_0 \tilde h^{\sfrac{1}{2}}c_k}{\mu}\left (\sin(\mu x\cdot \nu_k)e_{k}+\cos(\mu x\cdot \nu_k)e_{n_*+k}\right )\,,\end{equation}
 where $e_i\in \R^{n(n+1)}$ is the $i-$th standard basis vector and $\mu>1$ will be chosen later.  We compute 
 \begin{align*} \nabla w  =   & \sum_{k=1}^{n_*} \eta_0\tilde h^{\sfrac{1}{2}} c_k\left ( \cos(\mu x\cdot \nu_k) e_{k} \otimes \nu_k -\sin(\mu x\cdot \nu_k) e_{n_*+k} \otimes \nu_k\right )\\
+& \frac{1}{\mu}\sum_{k=1}^{n_*} \nabla \left (\eta_0 \tilde h^{\sfrac{1}{2}}c_k\right )\left (\sin(\mu x\cdot \nu_k)e_{k}+\cos(\mu x\cdot \nu_k)e_{n_*+k}\right )\,,
 \end{align*}
 so that 
\[ \nabla w^{\intercal}\nabla w =  \eta_0^{2}\tilde h \sum_{k=1}^{n_*} c_k^{2}\nu_k\otimes \nu_k + \frac{1}{\mu ^{2}} \sum_{k=1}^{n_*} \nabla \left (\eta_0 \tilde h^{\sfrac{1}{2}}c_k\right )^{\intercal} \nabla \left (\eta_0 \tilde h^{\sfrac{1}{2}}c_k\right )\,.\]
Now we define $\tilde g = g - w^{\sharp }e$,
\[ h_0 = \frac{1-\sigma_0^{2}(2+\eta_0)}{1-\sigma_0^{2}(2+\eta_0)^{2}}(1-\eta_0^{2})\tilde h +\frac{\eta_0^{2}}{1-\sigma_0^{2}(2+\eta_0)^{2}}\delta_1\,,\]
and we claim that $\tilde g, v_0$ and $h_0$  satisfy the assumptions of Proposition \ref{p:stage}.
\subsection{Starting the process} 
First of all, since $v_0 = \tilde u$ the assumptions \eqref{pstage:v_q} are trivially satisfied once $a(\tilde u, C_0)$ is large enough. Now since $|g-\tilde u^{\sharp}e| \leq C\delta_1$ whenever $\nabla \eta_0 \neq 0$ (thanks to \eqref{l:difference}), we can estimate for $k=1,2,3$
\[ | D^{k}\left (\eta_0\tilde h^{\sfrac{1}{2}}c_k\right )| \leq C(g,\tilde u,\Lambda)\delta_1^{\sfrac{1}{2}-k}\,,\]
so that for $k=1,2$ 
\[ |D^{k}\left (w^{\sharp}e\right )| \leq C(g,\tilde u) \delta_1^{1-k} + \frac{C(g,\tilde u,\Lambda)}{\mu ^{2}} \delta_{1}^{-k-1}\leq C(g,\tilde u,\Lambda ) \delta_{1}^{1-k} \,,\]
if $\mu \geq \delta_1^{-1}$. Consequently, \eqref{pstage:w_q} is satisfied. With the same reasoning as in the proof of Proposition \ref{p:stage} we can conclude \eqref{pstage:h_q1} and \eqref{pstage:h_q2} and also  \eqref{pstage:shortness} if 
\[ \mu = \hat C \delta_1^{-1}\,\] 
for a large enough constant $\hat C$ depending on $g,\tilde u, \varepsilon$ and $\sigma_0$. Moreover, we can achieve 
\[ \| w-x_0 \|_0 < \frac{\varepsilon}{2}\,,\] 
if $\hat C $ is large enough. 
\subsection{Conclusion}
We can now apply  Proposition \ref{p:stage} iteratively to generate the sequence $v_q$. Because of the estimate \eqref{pstage:c1} the sequence converges in $C^{1}$ to a map $\underline v$ which satisfies, since we can pass to the limit in \eqref{pstage:shortness}, $\underline v^{\sharp }e = \tilde g$. Lastly, we can estimate 
\[ \|v_{q+1}-v_q\|_{1,\alpha} \leq C\|v_{q+1}-v_q\|_1^{1-\alpha}[v_{q+1}-v_q]_2^{\alpha} \leq C \delta_{q+1}^{\sfrac{1}{2}} \lambda_{q+1}^{\alpha} = C a^{-\sfrac{1}{2}b^{q}(1-2\alpha bc)}\,.\]
Since $\alpha <\frac{1}{2bc}$ the sequence converges in $C^{1,\alpha}$ and consequently $\underline v\in C^{1,\alpha}$. Setting $v=(\underline v, w)$ then concludes the proof of the main theorem. We are therefore left to proving Lemma \ref{l:firstapprox}. \\
\subsection{Proof of Lemma \ref{l:firstapprox}}
Let $r >0$ be such that  
 \begin{equation}\label{linearshortness} (1-2\tilde \sigma_0)h'(1)(|x|-1)e \leq (g- u^{\sharp }e)_x \leq (1+2\tilde \sigma_0)h'(1)(|x|-1)e\,\end{equation}
 for all $x\in \bar B_1 \setminus B_{1-r}$. Since $u$ is strictly short and $\bar B_{1-r}$ is compact we can find $\bar \rho> 0$ such that 
 \[ g-u^{\sharp }e> \bar \rho e\quad \text{on } \bar B_{1-r} \,.\]
 Fix $\rho $ such that  
 \[ 2\rho \max\{ 1, ((2\tilde \sigma_0-1)h'(1))^{-1}\} <  \min\{r,\bar \rho\}\,.\]
 With this choice we have 
 \[ g-u ^{\sharp }e \geq \rho e \,\text{ on }\bar B_{1-\delta}\,,\] 
 where we set $\delta = \rho \max\{1, ((2\tilde\sigma_0-1)h'(1))^{-1}\}$. By Lemma 1 in \cite{Laszlo}, since $ (g-u^{\sharp}e -\frac{\rho}{2}e) (\bar B_{1-\delta})$ is compact, there exist $M$ nonnegative smooth functions $a_1,\ldots,a_M\in C^{\infty}(\bar B_{1-\delta})$ and unit vectors $\nu_1, \ldots,\nu_M\in \S^{n-1}$ such that 
 \begin{equation}\label{e:l1decomp} g-u^{\sharp}e -\frac{\rho}{2}e = \sum_{i=1}^{M}a_i^{2} \nu_i\otimes \nu_i\,,\end{equation}
 on $\bar B_{1-\delta}$. Fix a radially symmetric cutoff $\eta \in C^{\infty}(\bar B_1)$ such that \begin{align} 
  \eta &\equiv 1 \, \text{ on } \bar B_{1-2\delta} \,,\\ 
  \eta & \equiv 0 \, \text{ on } \bar B_1\setminus B_{1-\delta}\,,\\
  \|\eta^{(k)}\|_0 &\leq C_k\delta^{-k} \, \text{ for } k\geq 0\,,\,\label{eta3}\\
   (\eta')^{2} &= o(\eta) \, \text{ as } \eta \to 0\,,\label{growth}
   \end{align}
Such a function can be constructed in the same way as in Lemma \ref{l:cutoff}. We now use a Nash twist to construct $\tilde u$, i.e. for $k=0,\ldots,M$ we define iteratively $u_0:= u$ and 
\[ u_k = u_{k-1} + \frac{\eta a_k }{\lambda_k}( \sin(\lambda_k x\cdot \nu_k)\zeta_k^{1}+ \cos(\lambda_k x\cdot \nu_k)\zeta_k^{2}) \,,\]
where $\lambda_k>1$ are large frequencies to be chosen and $\zeta_k^{1},\zeta_k^{2}\in C^{\infty}(\bar B_1,\R^{m})$ are orthogonal unit vector fields which are normal to $u_{k-1}$ and are provided by Lemma \ref{l: normals}. Finally we set $\tilde u := u_M$. $\tilde u$ is smooth and because of the properties of $\eta$ we certainly have $\tilde u = u$ on $\bar B_1\setminus B_{1-\delta}$. To compute the induced metric we note that 
\begin{align} \nabla u_k = \nabla u_{k-1} + \eta a_k( \cos(\lambda_k x\cdot \nu_k) \zeta_k^{1}\otimes \nu_k- \sin(\lambda_k x\cdot \nu_k)\zeta_{k}^{2}\otimes \nu_k) + O\left (\lambda_k^{-1}\right )\left (\eta+\nabla \eta\right )\end{align}
Consequently 
\begin{equation}
 \nabla u_k^{\intercal}\nabla u_k = \nabla u_{k-1}^{\intercal}\nabla u_{k-1} + \eta^{2}a_k^{2}\nu_k\otimes \nu_k +  O\left (\lambda_k^{-1}\right )\left (\eta+\nabla \eta^{\intercal}\nabla \eta\right )\,.
\end{equation}
Remembering \eqref{e:l1decomp}, we therefore find 
\[ g-\tilde u^{\sharp}e = g- u^{\sharp}e + \sum_{k=1}^{M}\left (u_{k-1}^{\sharp}e -u_k^{\sharp}e\right )= (1-\eta^{2})(g-u^{\sharp}e) + \eta^{2}\frac{\rho}{2}e-\left (\eta+\nabla \eta^{\intercal}\nabla \eta\right )\underbrace{\sum_{k=1}^{M} O\left (\lambda_k^{-1}\right )}_{=:E}\,.\]
We now set 
\[ \tilde h(x) =\frac{1-2\tilde\sigma_0^{2}(2+\eta)}{1-\tilde\sigma_0^{2}(2+\eta)^{2}}(1-\eta^{2})h'(1)(|x|-1)+ \frac{\eta^{2}}{1-\tilde\sigma_0^{2}(2+\eta)^{2}}\frac{\rho}{2}\,.\]
Then $\tilde h \in C^{\infty}(\bar B_1)$ and \eqref{l:h1} follows directly. Moreover , one can write 
\[\tilde h(x) = \Phi(\eta)h'(1)(|x|-1) + \Psi(\eta)\rho\,,\] 
for the two rational functions 
\[ \Phi(x) =\frac{1-2\tilde\sigma_0^{2}(2+x)}{1-\tilde\sigma_0^{2}(2+x)^{2}}(1-x^{2})\,,\quad \Psi(x) = \frac{x^{2}}{2-2\tilde\sigma_0^{2}(2+x)^{2}}\,.\]
Since $\tilde\sigma_0\in ]0,\frac{1}{4}[$, one easily finds a constant $C\geq 1$ such that 
\begin{equation}\label{e:rationalfunctions} [\Phi]_{C^{k}([0,1])}+[\Psi]_{C^{k}([0,1])}\leq C \,, k=0,1,2,3\,.\end{equation}
Hence,
\[ \tilde h \leq C(|h'(1)|\delta+ \rho) \leq \bar \Lambda \delta \,,\] 
everywhere and 
\[ \tilde h \geq (1-\eta^{2})h'(1)(|x|-1)+ \eta^{2}\frac{\rho}{2} \geq |h'(1)|\delta+\eta^{2}(\frac{\rho}{2}-|h'(1)|\delta) \geq \frac{\rho}{2} \geq \bar \Lambda^{-1}\delta\] 
on $\bar B_{1-\delta}$ for a suitably chosen $\bar\Lambda$ depending only on $h$ and $\tilde\sigma_0$. Hence \eqref{l:h2} is satisfied as well, while \eqref{l:h3} follows with the help of Proposition \ref{p:chain} in view of \eqref{eta3} and \eqref{e:rationalfunctions}. 
It therefore remains to show \eqref{l:difference}. On $\bar B_1\setminus B_{1-\delta} $ it is implied by \eqref{linearshortness}. If we choose $\lambda_k$ so big that $\|E\|_0 < \tilde\sigma_0\rho$, then on $\bar B_{1-2\delta}$ one finds  
\[ g-\tilde u^{\sharp} e -\tilde h e = E \leq \tilde\sigma_0 \rho e = 2\tilde\sigma_0 \tilde h e\,,\] and analoguosly 
\[ g-\tilde u^{\sharp} e -\tilde h e = E \geq -\tilde\sigma_0\rho e = -2 \tilde\sigma_0\tilde he\,.\]
We're left with the set $\bar B_{1-\delta} \setminus B_{1-2\delta} $. Observe that 
\begin{align*} (1-\tilde\sigma_0(2+\eta))\tilde h -&(1-2\tilde\sigma_0)(1-\eta^{2})h'(1)(|x|-1) \\ & = \left (\frac{1-2\tilde\sigma_0^{2}(2+\eta)}{1+\tilde\sigma_0(2+\eta)}-(1-2\tilde\sigma_0)\right )(1-\eta^{2})h'(1)(|x|-1)  +\frac{\eta^{2}}{1+\tilde\sigma_0(2+\eta)}\frac{\rho}{2} \\
&  = \frac{-\tilde\sigma_0\eta}{1+\tilde\sigma_0(2+\eta)}(1-\eta^{2})h'(1)(|x|-1)+\frac{\eta^{2}}{1+\tilde\sigma_0(2+\eta)}\frac{\rho}{2}\,,
\end{align*}
and similarly
\begin{align*} (1+\tilde\sigma_0(2+\eta))\tilde h -&(1+2\tilde\sigma_0)(1-\eta^{2})h'(1)(|x|-1) \\
&  = \frac{\tilde\sigma_0\eta}{1-\tilde\sigma_0(2+\eta)}(1-\eta^{2})h'(1)(|x|-1)+\frac{\eta^{2}}{1-\tilde\sigma_0(2+\eta)}\frac{\rho}{2}\,,
\end{align*}

 Remembering \eqref{linearshortness} we find 
 \begin{align*} g-\tilde u^{\sharp}e &\leq (1+\tilde\sigma_0(2+\eta))\tilde he  -(1+\tilde\sigma_0(2+\eta))\tilde he+(1+2\tilde\sigma_0)(1-\eta^{2})h'(1)(|x|-1)e\\
&\quad+\eta^{2}\frac{\rho}{2}e+ C(\eta +|\eta'|^{2} )|E| e \\
 &= (1+\tilde\sigma_0(2+\eta))\tilde he -\eta\left (\frac{\tilde\sigma_0(1-\eta^{2})}{1-\tilde\sigma_0(2+\eta)}h'(1)(|x|-1)e +\frac{\tilde\sigma_0(2+\eta)}{1-\tilde\sigma_0(2+\eta)}\eta\frac{\rho}{2}e\right )\\
&\quad +C(\eta +|\eta'|^{2})|E|e\,\end{align*}
 and also 
 \begin{align*}
 g-\tilde u^{\sharp}e &\geq (1-\tilde\sigma_0(2+\eta))\tilde he +\eta\left (\frac{\tilde\sigma_0(1-\eta^{2})}{1+\tilde\sigma_0(2+\eta)}h'(1)(|x|-1)e +\frac{\tilde\sigma_0(2+\eta)}{1+\tilde\sigma_0(2+\eta)}\eta\frac{\rho}{2}e\right )\\
&\qquad -C(\eta +|\eta'|^{2})|E|e\,.
 \end{align*}
 Now, because of \eqref{growth} we can find $\varepsilon$ such that 
\[|\eta'|^{2}\leq \eta \,\quad \text{ for }\eta\leq  \varepsilon\,.\]
Then, on the region where $\eta > \varepsilon$, we have 
\[ \eta\left (\frac{\tilde\sigma_0(1-\eta^{2})}{1-\tilde\sigma_0(2+\eta)}h'(1)(|x|-1)e +\frac{\tilde\sigma_0(2+\eta)}{1-\tilde\sigma_0(2+\eta)}\eta\frac{\rho}{2}e\right ) \geq C(\varepsilon)e \,,\] 
and consequently, choosing $\lambda_k$ big enough, we find 
\[ (1-\tilde\sigma_0(2+\eta))\tilde he\leq g-\tilde u^{\sharp}e \leq (1+\tilde\sigma_0(2+\eta))\tilde he\,.\]
On the other hand, when $\eta\leq \varepsilon$, it holds 
\begin{align*} g-\tilde u^{\sharp}e &\leq (1+\tilde\sigma_0(2+\eta))\tilde he -\eta\left (\frac{\tilde\sigma_0(1-\eta^{2})}{1-\tilde\sigma_0(2+\eta)}h'(1)(|x|-1)e +\frac{\tilde\sigma_0(2+\eta)}{1-\tilde\sigma_0(2+\eta)}\eta\frac{\rho}{2}e -C|E|e\right )\\
&\leq (1+\tilde\sigma_0(2+\eta))\tilde he -\eta\left (C(\varepsilon)e-C|E|e\right ) \leq (1+\tilde\sigma_0(2+\eta))\tilde he \end{align*}
if the $\lambda_k$'s are chosen large enough. The lower bound follows in the same way, concluding the proof of the lemma. 

\begin{appendix}
\section{Proofs of Propositions \ref{p:normals} and \ref{p:decomp}} 
\subsection{Proof of Proposition \ref{p:normals}}
To prove Proposition \ref{p:normals} we need the following well known lemma, an elementary proof of which is contained, for example, in \cite{CDL17}.
\begin{lemma}\label{l: normals} Let $n,d,B,u$ be as in the assumptions of Proposition \ref{p:normals}. For every $1\leq k \leq d$ there exist $\zeta_1,\ldots, \zeta_k\in C^{\infty}\left (B,\R^{n+d}\right )$ such that for all $1\leq i,j\leq d$ we have
\begin{align}
 &\langle \zeta_i, \zeta_j \rangle = \delta_{ij} \quad \text{ on } B\,,\\ & \nabla u \cdot \zeta_i = 0 \quad \,\, \text{ on } B\,.
\end{align}
\end{lemma}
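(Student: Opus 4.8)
The content of the lemma is topological: the family of normal spaces
$N_x := \big(\nabla u(x)(\R^n)\big)^\perp$ is a rank-$d$ smooth subbundle of the trivial bundle
$B\times\R^{n+d}$, and such a bundle is trivial because $B$ is contractible, hence it admits a global
orthonormal frame. The plan is to make this explicit by parallel transporting a frame along a contraction
of $B$ to a point, which has the advantage of automatically producing a \emph{smooth} frame and requires
nothing beyond the elementary theory of linear ODEs. First I would reduce to the case $B=\bar B_1$: if
$\psi:\bar B_1\to B$ is a diffeomorphism, then $\nabla(u\circ\psi)(y)=\nabla u(\psi(y))\,\nabla\psi(y)$ and,
since $\nabla\psi(y)$ is invertible, the column space of $\nabla(u\circ\psi)(y)$ coincides with that of
$\nabla u(\psi(y))$, so that a normal orthonormal frame for $u\circ\psi$ pushes forward to one for $u$;
moreover it clearly suffices to produce $d$ such fields and keep the first $k$. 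On $\bar B_1$ set
$P(x):=\nabla u(x)\big(\nabla u(x)^\intercal\nabla u(x)\big)^{-1}\nabla u(x)^\intercal$, the orthogonal
projection onto the tangent space $\nabla u(x)(\R^n)$; since $u$ is an immersion, $\nabla u^\intercal\nabla u$
is invertible and $P\in C^\infty(\bar B_1,\mathrm{Sym}_{n+d})$. Let $A(x):=\mathrm{Id}-P(x)$ be the
orthogonal projection onto $N_x$, a smooth family of symmetric idempotents of rank $d$.

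Next I would use the contraction $H(x,t)=tx$ of $\bar B_1$ onto the origin. For each fixed $x$, consider the
smooth path of projections $A_x(t):=A(tx)$, $t\in[0,1]$, and define $U_x(t)\in\mathrm{GL}(n+d)$ as the
solution of the linear ODE
\[
\dot U_x(t)=[\dot A_x(t),A_x(t)]\,U_x(t),\qquad U_x(0)=\mathrm{Id},
\]
where $[\,\cdot\,,\,\cdot\,]$ is the matrix commutator. Since $A_x$ is symmetric and idempotent, the
standard computations for the ``Gauss connection'' give two facts: first, $[\dot A_x,A_x]$ is antisymmetric,
so $U_x(t)$ is orthogonal for every $t$; second, differentiating $A_x(t)^2=A_x(t)$ one checks that
$U_x(t)^{-1}A_x(t)\,U_x(t)$ is independent of $t$, hence $A_x(t)=U_x(t)\,A(0)\,U_x(t)^{-1}$. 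Fixing once and
for all an orthonormal basis $v_1,\dots,v_d$ of $N_0$, I would then define $\zeta_i(x):=U_x(1)\,v_i$ for
$i=1,\dots,d$ (and for the statement of the lemma, take $\zeta_1,\dots,\zeta_k$).

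The three required properties then follow immediately. Orthonormality of $\{\zeta_i(x)\}_{i=1,\dots,d}$
holds because $U_x(1)$ is orthogonal. Normality holds because $A(x)\,\zeta_i(x)=A(x)U_x(1)v_i=U_x(1)A(0)v_i
=U_x(1)v_i=\zeta_i(x)$, using $v_i\in N_0=\mathrm{Im}\,A(0)$; equivalently $\zeta_i(x)$ is orthogonal to
$\nabla u(x)(\R^n)$, i.e. $\nabla u(x)\cdot\zeta_i(x)=0$. Finally, smoothness of $x\mapsto\zeta_i(x)$ on all
of $\bar B_1$ follows because the coefficient $[\dot A_x(t),A_x(t)]=\big[\,DA(tx)[x]\,,\,A(tx)\,\big]$
depends smoothly on $(x,t)\in\bar B_1\times[0,1]$, so by smooth dependence of solutions of linear ODEs on
parameters $x\mapsto U_x(1)$ is smooth. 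I expect this last point to be the only delicate one: it is precisely
the choice of the contraction $t\mapsto tx$, rather than transporting radially in the variable $|x|$, that
keeps the construction smooth at the origin — there the path degenerates to a constant, but the dependence on
the parameter $x$ remains smooth. Everything else is a routine ODE computation.
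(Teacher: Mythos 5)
Your argument is correct. Note first that the paper itself does not prove this lemma: it is invoked as a known fact, with the reference \cite{CDL17} given for an elementary proof, so there is no internal proof to match step by step. Your route is a legitimate self-contained alternative: you observe that the normal spaces $N_x=(\mathrm{Im}\,\nabla u(x))^{\perp}$ form a smooth rank-$d$ family of subspaces, encode it in the smooth family of orthogonal projections $A(x)=\mathrm{Id}-\nabla u(\nabla u^{\intercal}\nabla u)^{-1}\nabla u^{\intercal}$, and trivialize it by Kato's parallel transport $\dot U=[\dot A,A]U$ along the radial contraction $t\mapsto tx$. The two key identities you use are exactly right: $[\dot A,A]$ is antisymmetric because $A,\dot A$ are symmetric (so $U$ stays orthogonal), and differentiating $A^{2}=A$ gives $A\dot A A=0$, whence $[[\dot A,A],A]=\dot A$ and $U^{-1}AU$ is constant in $t$, so $A(x)=U_x(1)A(0)U_x(1)^{-1}$ and the transported frame is orthonormal and normal to $\nabla u$. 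Smooth dependence on $x$, including at the origin, follows from smooth parameter dependence for linear ODEs, as you say; the only cosmetic point is smoothness up to $\partial B_1$, which you should settle either by reading $C^{\infty}(\bar B_1)$ as restriction of a smooth map on a neighborhood or by extending $u$ slightly, and likewise the reduction to $\bar B_1$ via the diffeomorphism $\psi$ is fine because the normal spaces of $u\circ\psi$ and $u$ coincide pointwise. Compared with quoting \cite{CDL17}, your construction buys an explicit frame produced by a single linear ODE, makes transparent that the only input is contractibility of the ball (triviality of the normal bundle), and transfers verbatim to finite regularity, with the frame one derivative rougher than $u$ — more than enough here, where $u$ is smooth.
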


\begin{proof}[Proof of Proposition \ref{p:normals}] 
In the proof all the constants appearing may depend on the embedding $u$. Fix $0< \rho_0 <1$ and let $v\in C^{\infty}(B, \R^{n+d})$ be such that $\| v-u \|< \rho_0$. Since $B$ is compact and $u$ is an embedding there exists a constant $C>0 $ such that 
\[ C^{-1}\text{Id} \leq \nabla u^{\intercal }\nabla u \leq C\,\text{Id}\,\]
in the sense of quadratic forms. Hence if $\rho_0$ is small enough we have 
\begin{equation} \label{e:p1}
 (2C)^{-1}\text{Id} \leq \nabla v^{\intercal}\nabla v \leq 2C\, \text{Id}\,,
\end{equation}
and consequently also 
\begin{equation}\label{e:p2} 
(2C)^{-n}\leq \text{det} (\nabla v^{\intercal}\nabla v ) \leq (2C)^{n}\,.
\end{equation}
Let $\zeta_1,\ldots, \zeta_m \in C^{\infty}(B, \R^{n+d})$ be the maps from Lemma \ref{l: normals} and define 
\begin{equation} \label{d:normals2}
\nu_i(v) := \zeta_i -\sum_{j=1}^{n}r_{ij}(v)\partial_j v\,,
\end{equation}
where $r_{ij}(v)$ are such that $\langle \nu_i(v),\partial_k v\rangle =0$ for every $k$. We claim that the functions $r_{ij}(v)\in C^{\infty}(B,\R^{n+d})$ depend smoothly on $\nabla v$ and satisfy the estimates 
\begin{equation}\label{e:p3}
 \|r_{ij}(v)\|_{k} \leq C_k \|v-u\|_{k+1}\quad \text{ for } k\geq 0\,.
\end{equation}
To see this, denote $b_{ik}(v) = \langle \zeta_i, \partial_k v\rangle $ and observe that 
\[ 0 = \langle \nu_i(v), \partial_k v\rangle = b_{ik}(v) - \sum_{j=1}^{n}r_{ij}(v) \langle \partial_j v, \partial_k v\rangle \,,\]
i.e. 
\[ R(v) \cdot \nabla v^{\intercal }\nabla v = B(v)\,,\]
where $R(v)$ and $B(v)$ are the $m\times n$ matrices with entries $r_{ij}(v)$ and $b_{ij}(v)$ respectively. By \eqref{e:p1}, $R(v)$ is uniquely determined. We write 
\[ (\nabla v^{\intercal } \nabla v)^{-1}_{ij} = (\det \nabla v^{\intercal }\nabla v)^{-1} P_{ij}(\nabla v)\,,\] 
where $P_{ij}(\nabla v)$ is a polynomial in the arguments $\partial_k v^{l}$. Since by assumption $[v]_1 \leq [u]_1 +1$, Lemma H\"older stuff yields  
\[ [ P_{ij}(\nabla v) ] \leq C_k [v]_{k+1}\,.\]
Moreover, \eqref{e:p2} implies 
\[ [(\det\nabla v^{\intercal }\nabla v)^{-1}]_k\leq C_k [v]_{k+1}\,,\]
so that 
\begin{equation} \label{e:p4}
 [(\nabla v^{\intercal } \nabla v)^{-1}_{ij}]_k\leq C_k [v]_{k+1}\,.
\end{equation}
For the other factor we observe that $b_{ij}(v) = \langle \zeta_i, \partial_j v- \partial_j u \rangle$,  since $\zeta_i$ is orthogonal to $Tu(B)$ at any point. Whence, by the Leibnitz rule  
\begin{equation}\label{e:p5}
[b_{ij}(v)]_k\leq C_k([v-u]_1 + [v-u]_{k+1}) \leq C_k \|v-u\|_{k+1}\,.
\end{equation}
Combining \eqref{e:p4} and \eqref{e:p5} leads to the estimate \eqref{e:p3}.\\
As a consequence, we can deduce 
\begin{equation}
\delta_{ij}-\frac{1}{2d} \leq \langle \nu_i(v),\nu_j(v)\rangle \leq \delta_{ij}+\frac{1}{2d}
\end{equation} for $\rho_0$ small enough. This implies that the family $\displaystyle \{\nu_i(v)\}_{i=1,\ldots,d}$ is linearly independent at every point and thus (being in addition orthogonal to $Tv(B)$) constitutes a frame for the normal bundle $Nv(B)$. The wanted vectorfields $\zeta_i$ are then produced by a Gram-Schmidt normalization procedure. To get the estimates \eqref{pnormals1} we carry out the procedure in details. \\
Therefore, we set 
\[ \zeta_1(v) := \frac{\nu_1(v)}{|\nu_1(v)|}\,.\]
If $\rho_0 $ is small enough, then $|\nu_i(v)| \geq \frac{1}{2} $ for every $i$ (thanks to \eqref{e:p3}), and so $\zeta_1(v)$ is a smooth function with
\[ [\zeta_1(v)]_k \leq C_k [\nu_1(v)]_k \leq C_k( 1+\|v-u\|_{k+1}) \leq C_k(1+\|v\|_{k+1})\,.\]
Moreover 
\[ |\zeta_1(v)-\zeta_1| \leq \frac{2|\nu_1(v)-\zeta_1|}{|\nu_1(v)|}\leq C\|v-u\|_1\,.\]
We now assume that $\zeta_1(v), \ldots , \zeta_{l-1}(v)$ are already constructed, satisfying \eqref{pnormals1}-\eqref{pnormals3} and in addition 
\begin{equation}\label{e:p6}
\|\zeta_i(v)-\zeta_i\|_0 \leq C\|v-u\|_1\,.
\end{equation}
 We then set 
 \[ \theta_l(v) = \nu_l(v)- \sum_{j=1}^{l-1}\langle \nu_l(v),\zeta_j(v)\rangle \zeta_j(v)\,\]
 and $ \displaystyle \zeta_l(v) = \frac{\theta_l(v)}{|\theta_l(v)|}$. It remains to show that $\zeta_l(v)$ satisfies \eqref{pnormals1}-\eqref{pnormals3} and \eqref{e:p6}. \\ 
 Observe that 
 \[ \langle \nu_l(v), \zeta_j(v)\rangle  = \langle \nu_l(v)-\zeta_l,\zeta_j(v)\rangle+ \langle \zeta_l,\zeta_j(v)-\zeta_j\rangle \,\]
 so that $\|\langle \nu_l(v),\zeta_j(v)\rangle \|_0 \leq C\|v-u\|_1$ and  
 \begin{align*}
  [ \langle \nu_l(v), \zeta_j(v)\rangle]_k &\leq C_k(1+[r_{ij}(v)]_k+\|r_{ij}(v)\|_0[v]_{k+1}+\|v-u\|_1(1+\|v\|_{k+1}) +[\zeta_j(v)-\zeta_j]_k)\\ &\leq C_k(1+\|v\|_{k+1})\,.
 \end{align*}
 In particular $|\theta_l(v)|\geq \frac{1}{4}$ for $\rho_0$ small enough and 
 \[ [\theta_l(v)]_k \leq C_k (1+\|v\|_{k+1})\,.\]
 Therefore $\zeta_l(v)$ satisfies \eqref{pnormals1}-\eqref{pnormals3}. Since moreover 
 \begin{align*}|\zeta_l(v)- \zeta_l |&\leq \frac{2|\theta_l(v)-\zeta_l|}{|\theta_l(v)|} \leq C( |\theta_l(v)-\nu_l(v)|+|\nu_l(v)-\zeta_l|) 
 \\&\leq C\|v-u\|_1\end{align*}
the proposition is proved.
\end{proof} 
\subsection{Proof of Proposition \ref{p:decomp}}
For the proof of Proposition \ref{p:decomp} we need the following lemma from \cite{CDS10}. 

\begin{lemma}\label{l:decomp}
Let $g_0\in \text{Sym}_{n}^{+}$. There exists $r \equiv r(g_0,n)>0$, $\nu_1,\ldots, \nu_{n_*}\in \mathbb{S}^{n-1}$, and linear maps $L_1,\ldots,L_{n_*}: \text{Sym}_n \to \R$ such that 
\[ g = \sum_{k=1}^{n_*} L_k(g)\nu_k\otimes \nu_k\,,\] 
for every $g\in \text{Sym}_n$. Moreover, if $g\in \text{Sym}_n$ is such that $|g-g_0|<r$, then $L_k(g) > r$ for every $k$. 
\end{lemma}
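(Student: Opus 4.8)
The plan is to reduce, via a linear conjugation, to one explicitly chosen positive‑definite matrix, for which a concrete basis of rank‑one symmetric matrices can be written down and the required positivity checked by hand.

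First I would record the conjugation trick. For invertible $S\in\R^{n\times n}$ the map $B\mapsto SBS^{\intercal}$ is a linear automorphism of $\mathrm{Sym}_n$ sending $\nu\otimes\nu$ to $(S\nu)\otimes(S\nu)=|S\nu|^{2}\,\hat\nu\otimes\hat\nu$ with $\hat\nu:=S\nu/|S\nu|\in\S^{n-1}$. Consequently, if for \emph{some} fixed positive‑definite $g_0'$ one can write $g_0'=\sum_{k=1}^{n_*}\lambda_k'\,\nu_k'\otimes\nu_k'$ with $\lambda_k'>0$ and $\{\nu_k'\otimes\nu_k'\}$ a basis of $\mathrm{Sym}_n$, then applying the automorphism with $S:=g_0^{1/2}(g_0')^{-1/2}$ (so that $Sg_0'S^{\intercal}=g_0$, using the symmetric positive‑definite square roots) yields $g_0=\sum_k\lambda_k\,\nu_k\otimes\nu_k$ with $\nu_k:=S\nu_k'/|S\nu_k'|\in\S^{n-1}$, $\lambda_k:=\lambda_k'\,|S\nu_k'|^{2}>0$, and $\{\nu_k\otimes\nu_k\}$ still a basis (the image of a basis under an automorphism, each vector rescaled by a nonzero factor). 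So it suffices to treat one convenient $g_0'$.

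Next I would pick $g_0':=(n+1)\,\mathrm{Id}_n+J$, where $J$ is the $n\times n$ matrix all of whose entries are $1$; this is positive definite since $J$ is positive semidefinite. As the $n_*=\tfrac{n(n+1)}2$ unit vectors I would take the $n$ vectors $e_i$ together with the $\binom n2$ vectors $w_{ij}:=\tfrac1{\sqrt2}(e_i+e_j)$, $1\le i<j\le n$. Reading off the $(k,\ell)$‑entry with $k\neq\ell$ in a vanishing combination $\sum_i a_i\,e_i\otimes e_i+\sum_{i<j}b_{ij}\,w_{ij}\otimes w_{ij}=0$ forces $b_{ij}=0$ for all $i<j$, and then $a_i=0$ for all $i$; hence $\{e_i\otimes e_i\}\cup\{w_{ij}\otimes w_{ij}\}$ is a basis of $\mathrm{Sym}_n$. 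A direct computation then gives $3\sum_i e_i\otimes e_i+2\sum_{i<j}w_{ij}\otimes w_{ij}=(n+1)\,\mathrm{Id}_n+J=g_0'$, the sought positive representation. Pulling this back by the previous step produces $\nu_1,\dots,\nu_{n_*}\in\S^{n-1}$ with $\{\nu_k\otimes\nu_k\}$ a basis and $g_0=\sum_k\lambda_k\,\nu_k\otimes\nu_k$, $\lambda_k>0$. I would then let $L_1,\dots,L_{n_*}$ be the dual basis of $\{\nu_k\otimes\nu_k\}$ — the unique linear functionals with $L_j(\nu_k\otimes\nu_k)=\delta_{jk}$ — so that $g=\sum_k L_k(g)\,\nu_k\otimes\nu_k$ for every $g\in\mathrm{Sym}_n$, and $L_k(g_0)=\lambda_k>0$. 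Since each $L_k$ is a continuous linear functional positive at $g_0$, there is $r>0$ (for instance $r:=\tfrac12\min_k L_k(g_0)/(1+\|L_k\|)$) with $|g-g_0|<r\Rightarrow L_k(g)>r$ for all $k$, which is the last claim.

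There is no serious obstacle here; the only point requiring a little care is the choice of the auxiliary matrix. Conjugating to $\mathrm{Id}$ does \emph{not} help, because its invariance under $O(n)$ forces every ``product‑type'' rank‑one basis to represent $\mathrm{Id}$ with a vanishing coefficient; the remedy is to aim instead for a strictly diagonally dominant matrix with positive off‑diagonal entries, which genuinely has strictly positive coordinates in the explicit basis $\{e_i\otimes e_i\}\cup\{w_{ij}\otimes w_{ij}\}$. Verifying that this family is a basis and that $g_0'$ decomposes as stated are then the only computations, and both are immediate.
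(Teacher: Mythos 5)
Your proof is correct. Note that the paper itself gives no argument for this lemma---it is quoted from \cite{CDS10}---so there is no in-paper proof to compare against; what you have written is a complete, self-contained and fully explicit derivation. The two ingredients both check out: the conjugation $B\mapsto SBS^{\intercal}$ with $S=g_0^{1/2}(g_0')^{-1/2}$ is a linear automorphism of $\text{Sym}_n$ carrying rank-one matrices $\nu\otimes\nu$ to positive multiples of rank-one matrices of unit vectors, so it transports a ``positive basis representation'' of $g_0'$ to one of $g_0$; and the identity $3\sum_i e_i\otimes e_i+2\sum_{i<j}w_{ij}\otimes w_{ij}=(n+1)\,\mathrm{Id}_n+J$ together with the linear independence of $\{e_i\otimes e_i\}\cup\{w_{ij}\otimes w_{ij}\}$ is a correct computation. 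Passing to the dual basis and using continuity of the $L_k$ then gives the stated $r$. (The usual proof in the literature is less explicit: one observes that $\text{Sym}_n^+$ is the interior of the convex cone generated by $\{\nu\otimes\nu:\nu\in\S^{n-1}\}$ and extracts $n_*$ spanning generators with strictly positive weights; your version replaces this soft argument by a concrete basis, which is arguably cleaner.)

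One small inaccuracy worth flagging, though it has no bearing on the validity of the proof: the closing remark that conjugating to $\mathrm{Id}$ ``does not help'' because $O(n)$-invariance forces a vanishing coefficient is false as a general statement. For instance in $n=2$ the three unit vectors at angles $0$, $\tfrac{\pi}{3}$, $\tfrac{2\pi}{3}$ satisfy $\sum_{k=1}^{3}\nu_k\otimes\nu_k=\tfrac32\,\mathrm{Id}$ and the matrices $\nu_k\otimes\nu_k$ form a basis of $\text{Sym}_2$, so $\mathrm{Id}$ does admit a representation with all coefficients strictly positive (and analogous tight-frame decompositions exist in every dimension). What is true is only that the particular family $\{e_i\otimes e_i\}\cup\{w_{ij}\otimes w_{ij}\}$ represents $\mathrm{Id}$ with vanishing off-diagonal coefficients, which is why your choice of the auxiliary matrix $(n+1)\,\mathrm{Id}_n+J$ is convenient. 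Since this remark is only motivation for a design choice and not a step of the argument, the proof stands as written.
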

Now the proposition is an easy consequence of the classical implicit function theorem. 
\begin{proof}[Proof of Proposition \ref{p:decomp}]
Let $r>0$ be the radius and $\nu_1,\ldots, \nu_{n_*}\in \mathbb{S}^{n-1}$  be the vectors given by Lemma \ref{l:decomp} when $g_0 = \text{Id}_n$ and define  the map
\begin{align*}\Psi : &\left (\text{Sym}_n \right )^{n_*^2}\times \left( \text{Sym}_n \right )^{ n_*}\times \R^{n_*}\times \R^{n_*}\to \text{Sym}_n\\
&\left (\{\Lambda_{ij}\},\{M_i\},g,\{c_i\}\right )\mapsto \sum_{i}^{n*}c_i^{2}\nu_i\otimes \nu_i + \sum_{i=1}^{n_*}c_i M_i +\sum_{i,j=1}^{n_*}c_ic_j\Lambda_{ij} -g\,.
\end{align*}
$\Psi$ is smooth and by Lemma \ref{l:decomp} there exist $\bar c_1,\ldots,\bar c_{n*}\in \R$ with $\bar c_j > r$ for every $j$ and 
\[ \Psi(0,0,\text{Id}_n,\{\bar c_j\}) = 0\,, \quad \partial_{c_i} \Psi \vert_{(0,0,\text{Id}_n, \{\bar c_j\})} =  2 \bar c_i \nu_i\otimes \nu_i \,.\]
Since the family $\{\nu_i\otimes \nu_i\}$ is linearly independent the differential of $\Psi$ with respect to the variable $c=(c_1,\ldots,c_{n*})$ has full rank at $(0,0,\text{Id}_n,\bar c)$. Consequently, by the implicit function theorem, there exist neighborhoods $V$ of $(0,0,\text{Id}_n)$ and $U$ of $\bar c$ respectively and a diffeomorphism $\Phi:V\to U$ such that 
\[ \{ \Psi = 0 \} \cap \left ( V\times \R^{n_*}\right ) = \{ (\{\Lambda_{ij}\},\{M_i\}, g, \Phi(\{ \Lambda_{ij}\},\{M_i\},g) ) : (\{ \Lambda_{ij}\},\{M_i\},g)\in V\}\,.\]
Therefore, if $r_0$ is small enough we can define $c_k(x) := \Phi(\{\Lambda_{ij}(x)\},\{M_i(x)\}, \tau(x))_k$ and \eqref{e:pdecomp1} will be satisfied. The estimates \eqref{e:pdecomp2} are then a consequence of Proposition \ref{p:chain}.
\end{proof}
\end{appendix}

\bibliographystyle{plain}

\end{document}